\theoremstyle{definition}
\theoremstyle{plain}
\date{}
\newtheorem{Thm}{Theorem}[section]
\newtheorem{Prop}[Thm]{Proposition}
\newtheorem{Lemma}[Thm]{Lemma}
\newtheorem{Cor}[Thm]{Corollary}
\newcommand{\p}{\partial}
\newcommand{\dis}{\displaystyle}
\newcommand{\norm}{\parallel}
\newcommand{\Z}{{\mathbb Z}}
\newcommand{\Q}{{\mathbb Q}}
\newcommand{\N}{{\mathbb N}}
\newcommand{\R}{{\mathbb R}}
\newcommand{\ep}{\varepsilon }
\newcommand{\2}{\frac{1}{2} }
\newcommand{\wto}{\rightharpoonup}
\newcommand{\Omegain}{\Omega_h\setminus\p\Omega_h}
\newcommand{\tOmegain}{{\tilde{\Omega}_h\setminus\p\tilde{\Omega}_h}}
\newcommand{\tOmega}{{\tilde{\Omega}}}
\newcommand{\nnorm}{|\!|\!|}
\def\text#1{\mbox{#1 }}
\title{\bf A finite difference method for inhomogeneous incompressible Navier-Stokes equations}
\author{Kohei Soga
\footnote{Department of Mathematics, Faculty of Science and Technology, Keio University, 3-14-1 Hiyoshi, Kohoku-ku, Yokohama, 223-8522, Japan. E-mail:  soga@math.keio.ac.jp 
}}
\begin{document}
\maketitle
\begin{abstract} 
\noindent  This paper provides mathematical analysis of an elementary fully discrete finite difference method applied to inhomogeneous (non-constant density and viscosity) incompressible Navier-Stokes system on a bounded domain. The proposed method consists of a version of Lax-Friedrichs explicit scheme for the transport equation and a version of Ladyzhenskaya's implicit scheme for the Navier-Stokes  equations. Under the condition that the initial density profile is strictly away from $0$, the scheme is proven to be strongly convergent to a weak solution (up to a subsequence) within an arbitrary time interval, which can be seen as a proof of existence of a weak solution to the system. The results contain a new  Aubin-Lions-Simon type compactness method with an interpolation inequality between strong norms of the velocity and a weak norm of the product of the density and velocity.     
\medskip

\noindent{\bf Keywords:}  inhomogeneous incompressible Navier-Stokes equations; transport equation; weak solution; finite difference method  \medskip

\noindent{\bf AMS subject classifications:}   35Q30; 35Q49;   35D30; 65M06
\end{abstract}
%
\setcounter{section}{0}
\setcounter{equation}{0}
\section{Introduction}
We consider the inhomogeneous incompressible Navier-Stokes equations on a general bounded domain of $\R^3$, i.e., the standard model of a mixture of miscible incompressible fluids with different densities and the non-constant viscosity,   
\begin{eqnarray}\label{NS}
 && \left\{
\begin{array}{lll}
\qquad\qquad\,\, \p_t\rho+v\cdot \nabla \rho &=&0 \mbox{\quad\quad\quad\quad\quad\quad\quad\quad\,\, in $(0,T]\times\Omega $,}
\medskip\\
\qquad \rho\Big(\p_tv+(v\cdot \nabla)v \Big)&=& \nabla\cdot\{\mu(\rho) (\nabla v+ {}^{\rm t}\!(\nabla v))\}+\rho f -\nabla p\\
&&\mbox{\qquad\quad\,\,\,\,\qquad\quad\quad\quad\, in $(0,T]\times\Omega $,}
\medskip\\
\qquad\qquad\qquad\,\,\quad\nabla\cdot v &=&0\mbox{\quad\quad\quad\quad\quad\quad\quad\quad\,\, in $(0,T]\times\Omega$, \qquad\qquad\quad}
\medskip\\
\,\,\,\qquad\qquad\quad\quad\,\,\, v(0,\cdot)&=&v^0\mbox{\quad\qquad\qquad\qquad\,\,\,\,\,\, in $\Omega$},
\medskip\\
\qquad\qquad\qquad \,\,\,\,\,\,\rho(0,\cdot)&=&\rho^0\mbox{\quad\qquad\qquad\qquad\,\,\,\,\,\, in $\Omega$},
\medskip\\
\qquad\qquad \qquad \qquad \,\,\, \,\,\,v&=&0\mbox{\quad\qquad\qquad\qquad\,\,\,\,\,\,\,\, on $(0,T]\times\partial \Omega$},
\end{array}
\right.\\\nonumber  
&&\Omega\subset \R^3 \mbox{ is a bounded connected open set  with a Lipschitz boundary,}
\end{eqnarray}
where $v=v(t,x)$ is the unknown  velocity, $\rho=\rho(t,x)$ is the unknown density,  $p=p(t,x)$ is the unknown pressure, $\mu(\cdot)$ is a given viscosity function depending on the density, $f=f(t,x)$ is a given external force, 
$T$ is an arbitrary positive terminal time, $v^0$ and $\rho^0$ are initial data, $\nabla=(\p_{x_1},\p_{x_2},\p_{x_3})$, $\Delta=\p_{x_1}^2+\p_{x_2}^2+\p_{x_3}^2$,  $v_t=\partial_t v$, $v_{x_j}=\partial_{x_j}v$, etc., stand for  the partial (weak) derivatives of $v(t,x)$, $\nabla v$ is the Jacobian matrix of $v$ and  
$ {}^{\rm t}\!(\nabla v)$ stands for the transpose of $\nabla v$.
In this paper, we suppose that $\mu$, $f$, $v^0$ and $\rho^0$ are  such that   
\begin{eqnarray*}
&&\mu:[0,\infty)\to(0,\infty),\quad\mbox{continuous},\\
&&\mbox{$f\in L^2_{\rm loc}([0,\infty);L^2(\Omega)^3)$;  $v^0\in L^2(\Omega)^3$; $\rho^0\in L^\infty(\Omega)$ with $\dis \inf_\Omega\rho^0>0$},
\end{eqnarray*} 
where $v^0$ does not need to be from $L^2_{\sigma}(\Omega)$. Here, $C^r_{0}(\Omega)=C^r_{0}(\Omega;\R)$  is the family of $C^r$-functions\,:\,$\Omega\to\R$ that are equivalently $0$ near $\p\Omega$; 
$C^r_{0,\sigma}(\Omega):=\{v\in C^r_0(\Omega)^3\,|\,\nabla\cdot v=0\}$;  
$L^2(\Omega)=L^2(\Omega;\R)$; 
$H^1_0(\Omega)=H^1_0(\Omega;\R)$ is the closure of $C^\infty_0(\Omega)$ with respect to the norm  $\norm \cdot\norm_{H^1(\Omega)}$; $L^2_{\sigma}(\Omega)$ (resp. $H^1_{0,\sigma}(\Omega)$)  is the closure of $C^\infty_{0,\sigma}(\Omega)$ with respect to the norm $\norm\cdot\norm_{L^2(\Omega)^3}$ (resp. $\norm\cdot\norm_{H^1(\Omega)^3}$);  $\tilde{H}^1_{0,\sigma}(\Omega):=\{v\in H^1_0(\Omega)^3\,|\,\nabla\cdot v=0 \} $, where  $\tilde{H}^1_{0,\sigma}(\Omega)$ coincides with $H^1_{0,\sigma}(\Omega)$ provided $\p \Omega$ is Lipschitz (see, e.g., Theorem 1.6 and Remark 1.7 of Chapter 1 in \cite{Temam-book}); $x\cdot y:=\sum_{i=1}^3x_iy_i$ for $x,y\in\R^3$. 

If $v$ and $\rho$ are  smooth, the first, second and third equations of \eqref{NS} yield  
\begin{eqnarray}\label{(1.1)b}
&& \p_t\rho+ \nabla\cdot(\rho v)=0,\\
\label{(1.1)a}
&&\p_t(\rho v)+\sum_{j=1}^3\p_{x_j}(\rho v_jv)=\nabla\cdot\{\mu(\rho) (\nabla v+{}^{\rm t}\!(\nabla v))\}+\rho f-\nabla p.  
\end{eqnarray}
This leads to the following definition of a weak solution of \eqref{NS}: 
 {\it a pair of functions  $\rho$ and $v$ is called a  weak solution of \eqref{NS}, if 
\begin{eqnarray}\nonumber
&&\rho\in L^\infty([0,T];L^\infty(\Omega))\mbox{ with $\rho>0$},\\\nonumber
&&v\in L^2([0,T];H^1_{0,\sigma}(\Omega))\cap L^\infty([0,T];L^2(\Omega)^3),\quad \\\label{1313}
&&\int_\Omega \rho^0(x)\varphi(0,x)dx+\int_0^T\int_\Omega \Big(\rho(t,x)\p_t\varphi(t,x)+v(t,x)\rho(t,x)\cdot \nabla\varphi(t,x)\Big)dxdt=0,\\\nonumber
&& \mbox{$\forall\,\varphi\in C^\infty([0,T]\times\R^3;\R)$ with {\rm supp}$(\varphi)\subset[0,T)\times\R^3$ compact};\\
\label{weak-form-NS}
&&\int_\Omega \rho^0(x)v^0(x)\cdot\phi(0,x)dx+ \int_0^T\int_\Omega \rho(t,x)v(t,x)\cdot \partial_t\phi(x,t)dxdt \\\nonumber
&&\quad +\sum_{j=1}^3\int_0^T\int_\Omega \rho(t,x)v_j(t,x)v(t,x)\cdot \partial_{x_j}\phi(t,x)dxdt\\\nonumber
&&\quad  -\sum_{j=1}^3\int_0^T\int_\Omega\mu(\rho(t,x))(\partial_{x_j}v(t,x)+\nabla v_j(t,x) )\cdot\partial_{x_j}\phi(t,x)dxdt \\\nonumber
&&\quad +\int_0^T\int_\Omega \rho(t,x)f(t,x)\cdot\phi(t,x)dxdt=0,\\\nonumber
&& \mbox{$\forall\,\phi\in C^\infty([0,T]\times\Omega;\R^3)$ with {\rm supp}$(\phi)\subset[0,T)\times\Omega$ and $\nabla\cdot \phi =0$}.
\end{eqnarray}}
\noindent We remark that the choice of $\varphi$  in \eqref{1313} implies that the $0$-extensions  of $\rho^0$, $\rho$ and $v$ outside $\Omega$ provide the unique DiPerna-Lions weak solution of \eqref{(1.1)b} in $(0,T]\times\R^3$ obtained in \cite{DiPerna-Lions}; hence $\rho$ belongs to $C([0,T];L^p(\Omega))$ for all $p\in[1,\infty)$ (see Introduction of \cite{Soga}).  

Existence of a weak solution to \eqref{NS} was established by  Antontsev-Kazhikhov \cite{AK} and Kazhikhov \cite{Kazhikhov} based on a Galerkin method under the assumption that the initial density profile is strictly positive and the viscosity is constant. Then, with finer a priori estimates,  Kim \cite{Kim} and Simon \cite{Simon} removed the positivity assumption, where the $L^\infty([0,T];L^2(\Omega)^3)$-regularity of the velocity was missing;  Lions \cite{Lions} allowed the non-constant viscosity. We refer to Danchin-Mucha \cite{DM} for further developments and  reviews of mathematical analysis of \eqref{NS} including its strong solutions. 

In regards to mathematical analysis of  numerical methods for \eqref{NS}, Liu-Walkington \cite{LW} proposed a numerical scheme that was strongly convergent to a weak solution based on a discontinuous Galerkin method for \eqref{(1.1)b} and a finite element method for \eqref{(1.1)a}, where they supposed  the positivity condition for the density  but allowed the non-constant viscosity.   Guermond-Salgado \cite{GS} demonstrated error analysis of a Galerkin type numerical method applied to \eqref{NS} (with strictly positive density and the constant viscosity coefficient) assuming existence of a smooth solution.

The purpose of this paper is to provides an elementary but rigorous approach to the existence of weak solutions of  \eqref{NS} based on a very simple finite difference scheme 
(we postpone actual implementation of the scheme for numerical tests). We are inspired by a finite difference scheme applied to homogeneous incompressible Navier-Stokes equations, and therefore we give a brief overview of the development of finite difference methods in the homogeneous case.  
In the huge literature of homogeneous incompressible Navier-Stokes equations, there are a number of results on mathematical analysis of various numerical methods. 
Among them, finite difference methods seem to be more   elementary and direct to the exact differential equations 
than other major methods.  
To the best of author's knowledge, the first rigorous treatment of fully discrete finite difference approximation of the homogeneous incompressible Navier-Stokes equations was given by Krzywicki-Ladyzhenskaya  \cite{KL} and Ladyzhenskaya \cite{Ladyzhenskaya} (here, we call it {\it Ladyzhenskaya's scheme}), where they proposed an elementary fully discrete implicit finite difference scheme on the uniform Cartesian grid to discretizes the homogeneous Navier-Stokes equations including the pressure and the divergence-free constraint. In \cite{Ladyzhenskaya}, she showed its solvability and a priori estimates; although she skipped details of its strong convergence to a Leray-Hopf weak solution, the issue turned out to be rather delicate, i.e., some ``equi-continuity'' with respect to the time variable or so-called the Aubin-Lions-Simon compactness method is  necessary (see, e.g., \cite{JL} and \cite{Temam-book}).       
Chorin \cite{Chorin} modified Ladyzhenskaya's scheme by separating the step of realizing the (discrete) divergence-free constraint from the discrete time evolution, where he demonstrated a convergence proof and error estimates of the scheme assuming a smooth exact solution on a $2$ or $3$-dimensional torus.   
Temam \cite{Temam-2} also investigated this type of fully discrete scheme based on a framework of finite element methods.  
Their methods are nowadays called  projection methods and many versions are known.   
 Kuroki-Soga \cite{Kuroki-Soga} proved convergence of (slightly modified) Chorin's original scheme to a Leray-Hopf weak solution by adjusting Aubin-Lions-Simon compactness arguments to space-time step functions with the discrete divergence-free constraint and the discrete time-differentiation, where difficulty comes from the fact that the discrete divergence-free constraint and the discrete time-differentiation vary according to the mesh size (one cannot work only within $C^\infty_{0,\sigma}$ or $H^1_{0,\sigma}$). 
 
In this paper, we employ a version of Ladyzhenskaya's scheme to \eqref{(1.1)a}, not a projection method. The advantage to do so is that Ladyzhenskaya's scheme provides a discrete velocity field possessing both the discrete divergence-free constraint and a good (discrete) $L^2_tH^1_x$-bound. Note that Chorin's scheme does not have such a feature (see \cite{Kuroki-Soga}). As for \eqref{(1.1)b}, we use a Lax-Friedrichs type explicit scheme. Our combination of the two schemes, which is probably the simplest method to solve  \eqref{NS},  must overcome the following difficulties in order to achieve  strong convergence to a weak solution: 
\begin{enumerate}
\item[(D1)]  The velocity field $v$ in \eqref{(1.1)b} can be unbounded and verification of the CFL-condition for the Lax-Friedrichs explicit scheme is non-trivial.
\item[(D2)] Aubin-Lions-Simon type compactness arguments to prove strong convergence of  the approximate velocity field refer to its discrete time-derivative, but controllability of the discrete time-derivative of the velocity field through   \eqref{(1.1)b}  is not clear, i.e., what we actually have is the discrete time-derivative of [density]$\times$[velocity].
\end{enumerate}
\indent An idea to overcome (D1) was given by Soga \cite{Soga}, where he showed a new technique to deal with the transport equation with an unbounded Sobolev velocity field through the  Lax-Friedrichs type explicit scheme, introducing  the generalized hyperbolic scale (see \eqref{scale} below) and truncation of the velocity field together with a suitable measure estimate for the truncated part. The direct consequence of this method is weak convergence to a DiPerna-Lions weak solution obtained in  \cite{DiPerna-Lions}, but a fine estimate of the norm of approximate solutions implies that the weak convergence is in fact strong convergence (it is essential that a DiPerna-Lions weak solution conserves its $L^p_x$-norm). We will follow this idea to deal with \eqref{(1.1)b}, where local averaging of possibly unbounded velocity fields is used instead of the truncation used in \cite{Soga}  in order to keep the discrete divergence-free constraint; an artificial boundary condition is imposed to the discretization of \eqref{(1.1)b}; the artificial boundary condition does not cause any harm  to the solution, if the  (locally averaged) velocity field vanishes on the boundary; since the  support of the locally averaged velocity field can be slightly larger than $\Omega$,  \eqref{(1.1)b} will be solved on a domain larger than $\Omega$ with constant-extension of the velocity field and the density field.         
      
(D2) will be overcome by modification of the interpolation inequality for the discrete velocity field  obtained by Kuroki-Soga \cite{Kuroki-Soga} in such a way that the ``weak norm'' of the velocity field is replaced by that of [density]$\times$[velocity] (see Lemma \ref{key-lemma} below); this is possible as long as the density is positive almost everywhere. In the end, we will see that the whole reasoning is quite similar to the homogeneous case.  

It is an open question how to treat the case with vacuum ($\inf \rho^0=0$) in our framework (strong convergence of the approximate velocity field is not clear). It would be also interesting to place our finite difference framework in the context of compressible problems, where we refer to \cite{Karper}, \cite{Feireisl2},  \cite{Gallouet2} and \cite{HS} for recent developments of mathematical analysis of numerical methods for compressible Navier-Stokes equations.  

Section 2 provides the notation and basic calculus on  the uniform Cartesian grid. Section 3 discusses the unique solvability and a priori estimates of the discrete problem. Section 4 demonstrates convergence of our scheme.     
\setcounter{section}{1}
\setcounter{equation}{0}
\section{Preliminary}

Consider the grid $h\Z^3:=\{ (hz_1,hz_2,hz_3)\,|\,z_1,z_2,z_3\in\Z \}$ with the mesh size $h>0$. Let $e^1,e^2,e^3$ be the standard basis of $\R^3$. 
The boundary of  $G\subset h\Z^3$ is defined as $\partial G:=\{ x\in G\,|\,\{x\pm he^i\,|\,i=1,2,3\}\not\subset G \}$. 

Let $\Omega$ be a bounded, open, connected subset of $\R^3$ with a Lipschitz boundary $\partial\Omega$. Set 
\begin{eqnarray*}
C_h(x)&:=&\Big[x_1-\frac{h}{2},x_1+\frac{h}{2}\Big)\times\Big[x_2-\frac{h}{2},x_2+\frac{h}{2}\Big)\times\Big[x_3-\frac{h}{2},x_3+\frac{h}{2}\Big),\\
C_h^+(x)&:=&C_h\Big(x+\frac{h}{2}e^1+\frac{h}{2}e^2+\frac{h}{2}e^3\Big)=[x_1,x_1+h)\times[x_2,x_2+h)\times[x_3,x_3+h).
\end{eqnarray*}
We discretize \eqref{(1.1)a} on the set 
$$\Omega_h:=\{ x\in \Omega\cap h\Z^3\,|\,\,\,\,C_{4h}(x)\subset \Omega\}.$$ 
For technical reasons (we will see them later), we solve  \eqref{(1.1)b} on a  domain slightly larger than $\Omega$: let $\tilde{\Omega}\subset \R^3$ be a connected bounded  open set such that 
\begin{eqnarray}\label{2epepe}
\tilde{\Omega}\supset \bigcup_{x\in\Omega} \{ y\in\R^3\,|\, |y-x|\le \epsilon_0\}\quad \mbox{($\epsilon_0>0$ is a constant)}.
\end{eqnarray}    
We discretize \eqref{(1.1)b} on the set 
$$\tilde{\Omega}_h:=\{ x\in \tilde{\Omega}\cap h\Z^3\,|\,\,\,\,C_{4h}(x)\subset \tilde{\Omega}\},$$ 
where we always assume that $h\ll \epsilon_0$. 

Define the discrete derivatives of a function $\phi:G\to\R$ with $G\subset h\Z^3$ as
\begin{eqnarray*}
&&D_i^+\phi(x):=\frac{\phi(x+he^i)-\phi(x)}{h},\,\,\,D_i^-\phi(x):=\frac{\phi(x)-\phi(x-he^i)}{h},\\
&&D_i\phi(x):=\frac{\phi(x+he^i)-\phi(x-he^i)}{h}
\end{eqnarray*}  
for each $x\in G$,  where {\it we always assume that $\phi$ is extended outside $G$ in a certain way, i.e., $\phi(x\pm he^i)$ are given even  if $x\pm he^i\not\in G$; in particular, if $\phi|_{\p G}=0$, we take the $0$-extension}. 
For $x,y\in\R^d$, set $x\cdot y:=\sum_{i=1}^dx_iy_i$, $|x|:=\sqrt{x\cdot x}$. Define the discrete gradient  and the discrete divergence  for  functions $\phi:G\to\R$ and $w=(w_1,w_2,w_3):G\to\R^3$  as 
\begin{eqnarray*}
&&D\phi(x):=(D_1\phi(x),D_2\phi(x),D_3\phi(x)),\,\,\,
D^\pm\phi(x):=(D^\pm_1\phi(x),D^\pm_2\phi(x),D^\pm_3\phi(x)),\\
&&D\cdot w(x):=D_1w_1(x)+D_2w_2(x)+D_3w_3(x),\\
&&D^\pm\cdot w(x):=D^\pm_1w_1(x)+D^\pm_2w_2(x)+D^\pm_3w_3(x)
\end{eqnarray*}
for each $x\in G$. 
 We often use the summation by parts such as  
\begin{eqnarray}\label{by-parts}
  &&\sum_{x\in G}w(x) D_i^+\phi(x)=-\sum_{x\in G }D^-_i w(x) \phi(x),\,\,\,
\sum_{x\in G}w(x) D_i\phi(x)=-\sum_{x\in G}D_i w(x) \phi(x)
\end{eqnarray}
for functions $w,\phi:G\to\R$ that are extended to be $0$ outside $G$.   

Define the discrete $L^p$-norms of a function $\phi:G\to\R$ or $\R^3$  with $G\subset h\Z^3$ as 
\begin{eqnarray*}
\norm \phi\norm_{p,G}:=\Big(\sum_{x\in G}|\phi(x)|^ph^3 \Big)^{\frac{1}{p}},\,\,\,\,\norm \phi\norm_{\infty,G}:=\max_{x\in G}|\phi(x)| ;
\end{eqnarray*}
in particular for $p=2$, we introduce the discrete inner product as  
\begin{eqnarray*}
(\phi,\tilde{\phi})_{G}:=\sum_{x\in G}\phi(x)\tilde{\phi}(x)h^3,\quad \norm \phi\norm_{2,G}=\sqrt{(\phi,\phi)_{G}}. 
\end{eqnarray*} 
\indent  We introduce a local averaging operator $\mathcal{A}_h^{k}$, which plays on $h\Z^3$ like the mollifier in $\R^3$. For each $k\in\N \cup \{0\}$, define the set 
$$A_h^k:=\Big\{ y=(y_1,y_2,y_3)\in \R^3\,\Big|\, |y_i|\le \frac{h}{2}+kh,\,\,\, i=1,2,3 \Big\}.$$ 
For each function $\phi:\Omega_h\to\R$ or $\R^3$, extend $\phi$ to be $0$ outside $\Omega_h$ and define the locally averaged function $\mathcal{A}_h^k\phi:h\Z^3\to\R$ or $\R^3$ as 
$$\mathcal{A}_h^{k}\phi(x):=\frac{1}{{\rm vol}(A_h^k)}\sum_{y\in A_h^k\cap h\Z^3}\phi(x+y) h^3,$$
where $\sum_{y\in A_h^k\cap h\Z^3}h^3={\rm vol}(A_h^k)$; in particular $\mathcal{A}_h^{0}\phi=\phi$ and  $\norm \mathcal{A}_h^{k}\phi\norm_{\infty ,h\Z^3}\to 0$ as $k\to\infty$. An easy calculation shows that  
\begin{eqnarray}\label{molli}
\norm \mathcal{A}_h^{k}\phi\norm_{p,h\Z^3}\le\norm \phi\norm_{p,\Omega_h},\quad\forall\,p\in[1,\infty]. 
\end{eqnarray}
In fact, the case of $p=1,\infty$ is clear; in the case $p\in(1,\infty)$, with $1/p+1/p^\ast=1$, we have by H\"older's inequality,   
\begin{eqnarray*}
&&| \mathcal{A}_h^{k}\phi(x)|\le\frac{1}{{\rm vol}(A_h^k)}\Big( \sum_{y\in A_h^k\cap h\Z^3}1^{p^\ast} h^3\Big)^\frac{1}{p^\ast}\Big(\sum_{y\in A_h^k\cap h\Z^3}|\phi(x+y)|^p h^3 \Big)^\frac{1}{p},\\
&&\sum_{x\in h\Z^3}| \mathcal{A}_h^{k}\phi(x)|^ph^3 \le\frac{1}{{\rm vol}(A_h^k)}\sum_{y\in A_h^k\cap h\Z^3}
\sum_{x\in  h\Z^3}|\phi(x+y)|^p h^3h^3
\le \norm \phi\norm_{p,\Omega_h}^p.
\end{eqnarray*}
\indent For a technical reason (we will see it later), we sometimes need to argue in an inner part of $\Omega_h$. Define  
\begin{eqnarray*}
\Omega_h^\circ:=\Big\{x\in\Omegain\,\Big|\,x+a^1he^1+a^2he^2+a^3he^3\in\Omegain, \,\,\,a^1,a^2,a^3\in\{0,1,2\}\,     \Big\}.
\end{eqnarray*}
When the central difference $D$ is used, we must look at  the $\{2e^1,2e^2,2e^3\}$-translation invariant subsets $G^1,\ldots,G^8$ of the grid $h\Z^3$, i.e.,   $G^1,\ldots,G^8$ are the sets of grid points with index $($even$,$ even$,$ even$)$, $($even$,$ even$,$ odd$)$, $($even$,$ odd$,$ even$)$, $($odd$,$ even$,$ even$)$, $($even$,$ odd$,$ odd$)$, $($odd$,$ odd$,$ even$)$, $($odd$,$ even$,$ odd$)$, $($odd$,$ odd$,$ odd$)$, respectively. In particular,  the $0$-mean value condition of $\phi:\Omega_h\to\R$ to verify $D\phi=0\Rightarrow \phi=0$  is  given on each $\Omega_h^\circ\cap G^i$.   
We always assume that $h>0$ is small enough so that $\Omega_h^{\circ }\cap G^i$ is connected, i.e., for any $x,\tilde{x}\in\Omega_h^{\circ }\cap G^i$, we have $\omega^1,\omega^2,\ldots,\omega^K\in\{\pm e^i\}_{i=1,2,3}$ such that $x+2h\omega^1+\cdots+2h\omega^k\in\Omega_h^{\circ }\cap G^i$ for all $k\le K$ and $x+2h\omega^1+\cdots+2h\omega^K=\tilde{x}$.
We state a discrete Poincar\'e type inequality:  
\begin{Lemma}[Lemma 2.3 of \cite{Maeda-Soga}]\label{PoinII}
There exists a constant $A>0$ depending only on $\Omega$ for which each function $\phi:\Omega_h\to\R$ satisfies 
\begin{eqnarray*}
&&\sum_{j=1}^8\sum_{x\in\Omega_h^{\circ}\cap G^{j}} |\phi(x)-[\phi]^j|^2h^3
\le A^2 \sum_{x\in\Omega_h\setminus\partial\Omega_h}  |D \phi(x)|^2h^3,\\
&&[\phi]^j:=\Big( \sum_{x\in\Omega_h\cap G^{j}}h^3  \Big)^{-1} \sum_{x\in\Omega_h\cap G^{j}}\phi(x)h^3.
\end{eqnarray*}
\end{Lemma}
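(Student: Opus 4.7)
The plan is to exploit the fact that on each sublattice $G^j$ with spacing $2h$, the central difference acts as a telescoping operator: if $x$ and $x+2he^i$ both lie in $G^j$, then $\phi(x+2he^i)-\phi(x)=2h\,D_i\phi(x+he^i)$, and the midpoint $x+he^i$ sits in an adjacent sublattice. Crucially, the definition of $\Omega_h^\circ$ ensures that whenever $x, x+2he^i\in\Omega_h^\circ\cap G^j$ the midpoint lies in $\Omega_h\setminus\p\Omega_h$, so its contribution $|D\phi|^2$ is already counted on the right-hand side.

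First I would fix $j$ and use the connectivity of $\Omega_h^\circ\cap G^j$ (assumed in the preamble) to construct, for any pair $x,\tilde{x}\in\Omega_h^\circ\cap G^j$, an axis-by-axis monotone path $x=x^0,x^1,\dots,x^K=\tilde{x}$ lying inside $\Omega_h^\circ\cap G^j$, with $x^{k+1}-x^k\in\{\pm 2he^i\}_{i=1,2,3}$, length $K\le C_\Omega/h$, and midpoints $y_k=(x^k+x^{k+1})/2\in\Omega_h\setminus\p\Omega_h$. Telescoping and Cauchy--Schwarz yield
\[
|\phi(\tilde{x})-\phi(x)|^2\le 4h^2K\sum_{k=0}^{K-1}|D\phi(y_k)|^2\le 4C_\Omega h\sum_{k=0}^{K-1}|D\phi(y_k)|^2.
\]
Jensen's inequality applied to the average defining $[\phi]^j$ gives $|\phi(x)-[\phi]^j|^2\le N_j^{-1}\sum_{\tilde{x}\in\Omega_h\cap G^j}|\phi(x)-\phi(\tilde{x})|^2$ with $N_j=|\Omega_h\cap G^j|$, so that, after multiplying by $h^3$ and summing in $x$,
\[
\sum_{x\in\Omega_h^\circ\cap G^j}|\phi(x)-[\phi]^j|^2\,h^3\le \frac{4C_\Omega h^4}{N_j}\sum_{x,\tilde{x}}\sum_{k}|D\phi(y_k)|^2.
\]

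The main obstacle is the bookkeeping required to swap the pair-sum in $(x,\tilde{x})$ with the midpoint-sum: one must bound, for a fixed $y\in\Omega_h\setminus\p\Omega_h$, the number of chosen paths visiting $y$. For the axis-by-axis construction this is direct --- on, say, the first segment a midpoint $y=(y_1,y_2,y_3)$ is visited only by pairs with $x_2=y_2$, $x_3=y_3$, with $y_1$ between $x_1$ and $\tilde{x}_1$ and $\tilde{x}_2,\tilde{x}_3$ arbitrary, forcing $x_1,\tilde{x}_1,\tilde{x}_2,\tilde{x}_3$ each to range over $O(h^{-1})$ values, for a total of $O(h^{-4})$ pairs; the other two segments are handled symmetrically. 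Combined with $N_j h^3\asymp\mathrm{vol}(\Omega)$, this collapses the double sum into $C(\Omega)\sum_{y}|D\phi(y)|^2 h^3$ with $C(\Omega)$ independent of $h$. Summing over $j=1,\dots,8$ then yields the inequality with a constant $A=A(\Omega)$, matching the scheme of Lemma 2.3 of \cite{Maeda-Soga}.
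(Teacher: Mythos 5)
The paper itself gives no proof of this lemma (it is imported verbatim from Lemma 2.3 of \cite{Maeda-Soga}), so your argument can only be assessed on its own merits, and as written it has two genuine gaps, both hiding at the step you call ``bookkeeping''. First, your visit-counting bound relies on the paths being the canonical axis-by-axis monotone (staircase) paths of length $K\le C_\Omega/h$ lying inside $\Omega_h^{\circ}\cap G^j$. The paper's standing hypothesis is only the qualitative connectedness of $\Omega_h^{\circ}\cap G^j$, i.e.\ the existence of \emph{some} $2h$-path; for a non-convex Lipschitz $\Omega$ this neither produces staircase paths (the L-shaped path between two admissible points may leave $\Omega$ entirely) nor any $O(1/h)$ bound on the length of whatever path does exist. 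Without the canonical structure, the $O(h^{-4})$ bound on the number of pairs whose path visits a fixed midpoint has no justification, and the trivial bound $K\le\#(\Omega_h^{\circ}\cap G^j)=O(h^{-3})$ destroys the $h$-uniformity of the constant. The standard repair is to cover $\Omega$ by finitely many overlapping cubes, run your telescoping-plus-counting argument inside each cube (where staircase paths are available), and then chain the cube averages through the overlaps using connectedness of $\Omega$; that is an additional layer of argument, not bookkeeping.

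Second, your Jensen step lets $\tilde{x}$ range over all of $\Omega_h\cap G^j$, because $[\phi]^j$ is the average over that larger set, whereas the telescoping estimate is available only when both endpoints of every elementary step lie in $\Omega_h^{\circ}\cap G^j$ (that is exactly what guarantees the midpoints lie in $\Omega_h\setminus\partial\Omega_h$). For $\tilde{x}\in(\Omega_h\setminus\Omega_h^{\circ})\cap G^j$ the connectivity assumption gives nothing, and any path reaching such a point may have midpoints on $\partial\Omega_h$, where $|D\phi|$ is absent from the right-hand side --- this is precisely the boundary difficulty that the set $\Omega_h^{\circ}$ is introduced to avoid, as the paper remarks right after the lemma. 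Even granting the path construction, your argument would therefore only yield the inequality with $[\phi]^j$ replaced by the average over $\Omega_h^{\circ}\cap G^j$ (which, incidentally, is all the paper actually needs for the pressure estimate, since that average minimizes $c\mapsto\sum_{x\in\Omega_h^{\circ}\cap G^j}|\phi(x)-c|^2$); to obtain the statement as quoted you would still have to control the discrepancy between the two averages, i.e.\ the values of $\phi$ on $\Omega_h\setminus\Omega_h^{\circ}$, and nothing in your proposal addresses this.
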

\noindent The reason why we use $\Omega_h^{\circ}$ is to avoid the presence of the values of $D \phi$ on $\p\Omega_h$; see the upcoming application of the lemma to the discrete pressure, where the value of its discrete $x$-derivative on  $\p\Omega_h$ is out of any estimate.    

 In order to take out the discrete divergence-free part  of initial data, we need the discrete Helmholtz-Hodge decomposition with the central difference: 
\begin{Lemma}[Theorem 2.4 of \cite{Maeda-Soga}]
 For each function  $u:\Omega_h\to\R^3$, there exist unique functions $w:\Omega_h\to\R^3$ and  $\phi:\Omega_h\to\R$ such that 
\begin{eqnarray}\label{2pre}
&&D\cdot w=0\quad\mbox{ on $\Omega_h$};\qquad
w+D \phi =u\quad \mbox{ on $\Omega_h\setminus \partial\Omega_h$};\\\nonumber 
&&w=0\quad \mbox{ on $\partial\Omega_h$};\quad \sum_{x\in\Omega_h^{\circ }\cap G^j}\phi(x)=0 \quad\mbox{ for } j=1,\cdots,8.
\end{eqnarray}
\end{Lemma}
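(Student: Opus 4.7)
Since everything lives on a finite grid, my plan is to recast the statement as a finite-dimensional linear-algebra problem and prove it by combining three ingredients: (i) elementary orthogonality via summation by parts, (ii) injectivity of the discrete gradient on the space of normalised potentials using Lemma~\ref{PoinII}, and (iii) a dimension count that identifies the orthogonal complement of the divergence-free subspace with the image of that gradient.

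Let $\mathcal{W}$ be the discrete $L^2$-Hilbert space of $w:\Omega_h\to\R^3$ with $w|_{\p\Omega_h}=0$, let $\mathcal{V}\subset\mathcal{W}$ be the subspace of those with $D\cdot w=0$ on $\Omega_h$, and let $\Phi$ be the space of scalars $\phi:\Omega_h\to\R$ with $\sum_{\Omega_h^{\circ}\cap G^j}\phi=0$ for $j=1,\dots,8$. First, orthogonality $\mathcal{V}\perp D(\Phi)$ inside $\mathcal{W}$ restricted to $\Omegain$ follows at once from \eqref{by-parts} and $w|_{\p\Omega_h}=0$: $(w,D\phi)_{\Omega_h}=-(D\cdot w,\phi)_{\Omega_h}=0$. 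Second, I claim $D:\Phi\to\mathcal{W}$ is injective; indeed, $D\phi=0$ on $\Omegain$ forces $\phi\equiv[\phi]^j$ on each $\Omega_h^{\circ}\cap G^j$ by Lemma~\ref{PoinII}, the normalisation then makes this common value vanish, and the identities $\phi(x+he^i)=\phi(x-he^i)$ available at every $x\in\Omegain$ propagate the vanishing to the remaining points of $\Omega_h\cap G^j$ along chains of $2h$-steps with interior midpoints. Third, I close with a dimension count: by transposing the pairing in \eqref{by-parts}, the cokernel of $D\cdot:\mathcal{W}\to\R^{\Omega_h}$ is identified with the kernel of $D$ acting on $\R^{\Omega_h}$ with no normalisation; the same propagation argument, together with the observation that each $\mathbf{1}_{\Omega_h\cap G^j}$ is annihilated by $D$ (parity reasons), pins that kernel down to exactly eight dimensions. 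Hence $\dim\mathcal{V}^\perp=|\Omega_h|-8=\dim\Phi=\dim D(\Phi)$, so $D(\Phi)=\mathcal{V}^\perp$. Existence of the decomposition then follows from orthogonal projection of $u$ onto $\mathcal{V}$, and uniqueness of both $w$ and $\phi$ from orthogonality together with the injectivity of $D$ on $\Phi$.

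The main obstacle is the $2h$-path propagation argument, which I need in two places: to push $\phi\equiv 0$ from $\Omega_h^{\circ}\cap G^j$ out to all of $\Omega_h\cap G^j$, and to bound the dimension of the unnormalised kernel of $D$ from above by $8$. Concretely, one must verify that every $y\in(\Omega_h\cap G^j)\setminus\Omega_h^{\circ}$ can be reached from $\Omega_h^{\circ}\cap G^j$ by a chain $y=y_0,y_1,\dots,y_K\in\Omega_h^{\circ}\cap G^j$ with $y_{k+1}-y_k=\pm 2he^{i_k}$ and all midpoints $y_k+he^{i_k}$ lying in $\Omegain$. This is a geometric statement about the Lipschitz domain $\Omega$ for sufficiently small $h$, and is precisely what the carefully engineered definitions of $\Omega_h$, $\Omega_h^{\circ}$, and the parity classes $G^j$ from Section~2 are tailored to make true.
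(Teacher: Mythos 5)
Your overall frame is fine, and since the paper itself does not prove this lemma (it is quoted from Theorem 2.4 of \cite{Maeda-Soga}), your argument has to stand on its own. The orthogonality step is correct (with the small caveat that \eqref{by-parts} is stated for $0$-extended functions, so one should note that the boundary terms disappear precisely because $w|_{\p\Omega_h}=0$, while $D\phi$ is only ever evaluated at points of $\Omegain$, where no extension of $\phi$ is needed), and identifying the orthogonal complement of the discrete divergence-free subspace with the range of the adjoint map $\phi\mapsto (D\phi)|_{\Omegain}$ is sound finite-dimensional linear algebra. Incidentally, for \emph{existence} you do not need the dimension count at all: since $\mathcal{V}^\perp$ equals the range of $\phi\mapsto(D\phi)|_{\Omegain}$ over \emph{all} unnormalised $\phi$, you may project $u|_{\Omegain}$ onto $\mathcal{V}$, write the remainder as $D\psi|_{\Omegain}$, and then normalise by subtracting multiples of the indicators $\mathbf{1}_{\Omega_h\cap G^j}$, which you have already shown lie in the kernel; only their membership in the kernel is used, not that they span it.

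The genuine gap is the $2h$-chain propagation claim, which you invoke twice (to get injectivity of $D$ on $\Phi$, hence uniqueness of $\phi$, and to pin the kernel of the adjoint down to dimension $8$) but do not prove, and which does \emph{not} follow from anything stated in this paper. The only connectivity hypothesis made here is that $\Omega_h^{\circ}\cap G^i$ is connected by $2h$-steps \emph{within} $\Omega_h^{\circ}\cap G^i$; this says nothing about reaching points of $(\Omega_h\setminus\Omega_h^{\circ})\cap G^j$, in particular points of $\p\Omega_h$, by chains whose midpoints lie in $\Omegain$. This is not a cosmetic technicality: if, say, some $y\in\p\Omega_h$ had no neighbour in $\Omegain$, then $\phi$ could be modified at $y$ without affecting any condition in \eqref{2pre}, and uniqueness of $\phi$ would fail; more generally, $\dim\ker\bigl(\phi\mapsto D\phi|_{\Omegain}\bigr)$ exceeds $8$ exactly when such reachability fails. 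So the reachability statement is precisely the substance of the uniqueness assertion, and asserting that the definitions of $\Omega_h$, $\Omega_h^{\circ}$ and the parity classes are ``tailored to make it true'' is not a proof — for a general Lipschitz $\Omega$ and the definition $\Omega_h=\{x\in\Omega\cap h\Z^3 : C_{4h}(x)\subset\Omega\}$ it requires a genuine geometric argument (or must simply be imported, with the rest of the statement, from \cite{Maeda-Soga}). As written, your argument establishes uniqueness of $w$, uniqueness of $\phi$ only on $\Omega_h^{\circ}$, and existence; the full uniqueness of $\phi$ on $\Omega_h$ remains open at exactly the point you flagged.
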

\noindent The discrete Helmholtz-Hodge decomposition operator $P_h$  for each function $u:\Omega_h\to\R^3$ is defined as  
$$\mbox{$P_hu:=w$ \,\,($w$ is the one obtained in \eqref{2pre}). }$$ 
\indent We state a Korn type inequality.  
\begin{Lemma}\label{Korn}
For each function $w:\Omega_h\to\R^3$ such that $w|_{x\in\p\Omega_h}=0$ with the $0$-extension outside $\Omega_h$, it  holds that 
\begin{eqnarray*}
\sum_{i,j=1}^3\sum_{x\in\Omega_h} (D_j^+w_i(x)+D_i^+w_j(x))^2&=&2\sum_{i,j=1}^3\sum_{x\in\Omega_h}(D_j^+w_i(x))^2+2\sum_{x\in\Omega_h} (D^-\cdot w(x))^2\\
&\ge& 2\sum_{i,j=1}^3\sum_{x\in\Omega_h}(D_j^+w_i(x))^2.
\end{eqnarray*}
\end{Lemma}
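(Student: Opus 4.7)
The plan is to prove the equality first, since the stated inequality is an immediate consequence: once the identity is established, the nonnegativity of $(D^-\cdot w(x))^2$ gives the bound. Expanding the square gives
\begin{eqnarray*}
\sum_{i,j=1}^3\sum_{x\in\Omega_h}(D_j^+w_i+D_i^+w_j)^2 &=& \sum_{i,j=1}^3\sum_{x\in\Omega_h}\bigl[(D_j^+w_i)^2+(D_i^+w_j)^2+2\,D_j^+w_i\,D_i^+w_j\bigr]\\
&=& 2\sum_{i,j=1}^3\sum_{x\in\Omega_h}(D_j^+w_i)^2+2\sum_{i,j=1}^3\sum_{x\in\Omega_h} D_j^+w_i\, D_i^+w_j,
\end{eqnarray*}
where the symmetry of the index pair $(i,j)$ collects the first two squares into a single sum. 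Hence the whole task reduces to establishing the cross-term identity
$$\sum_{i,j=1}^3\sum_{x\in\Omega_h}D_j^+w_i(x)\,D_i^+w_j(x)=\sum_{x\in\Omega_h}(D^-\cdot w(x))^2.$$

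The plan for this identity is a double summation by parts, exploiting two ingredients. First, the forward and backward discrete derivatives commute as translations: $D_i^-D_j^+=D_j^+D_i^-$, which is a direct four-point computation. Second, because $w$ vanishes on $\partial\Omega_h$ and is extended by zero outside, the functions $w_j$, $D_i^-w_i$ and $D_j^+w_i$ all have effective support contained in $\Omega_h$ (the boundary layer contributions cancel precisely due to the $0$-extension), so formula \eqref{by-parts} applies without stray boundary terms. Concretely, I would first move $D_i^+$ off of $w_j$ by \eqref{by-parts} to obtain $\sum_x D_j^+w_i\,D_i^+w_j=-\sum_x D_i^-(D_j^+w_i)\,w_j$; then use commutativity to rewrite this as $-\sum_x D_j^+(D_i^-w_i)\,w_j$; and finally apply \eqref{by-parts} a second time, this time in direction $j$, to get $+\sum_x D_i^-w_i\,D_j^-w_j$. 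Summing over $i,j$ factors the right-hand side as $\sum_x(\sum_iD_i^-w_i)(\sum_jD_j^-w_j)=\sum_x(D^-\cdot w)^2$, which is exactly what is needed.

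Plugging this identity back into the expansion yields the stated equality, and the inequality is immediate. The main delicacy is purely bookkeeping, namely making sure that the summation-by-parts formula \eqref{by-parts} is legitimately applicable at each of the two steps: what must be checked is that the function onto which the difference operator is being shifted indeed vanishes outside $\Omega_h$. For $w_j$ this is the given hypothesis; for the intermediate expressions $D_j^+w_i$ and $D_i^-w_i$, the zero-boundary assumption on $w$ together with the definition of $\partial\Omega_h$ ensures that if $x\notin\Omega_h$ then every neighbor of $x$ that could contribute to these discrete derivatives is either on $\partial\Omega_h$ or outside $\Omega_h$, so the derivatives vanish there. Once this observation is recorded, the algebra above completes the proof.
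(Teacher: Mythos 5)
Your proposal is correct and follows essentially the same route as the paper: expand the square, then evaluate the cross term $\sum_{i,j}\sum_x D_j^+w_i\,D_i^+w_j$ by two applications of the summation-by-parts formula \eqref{by-parts} combined with the commutativity $D_i^-D_j^+=D_j^+D_i^-$, yielding $\sum_x (D^-\cdot w)^2$; your extra remark that the $0$-extension and $w|_{\p\Omega_h}=0$ make the intermediate differences vanish outside $\Omega_h$ is exactly the justification implicitly used in the paper. No gap.
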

\begin{proof}
The assertion follows from 
\begin{eqnarray*}
&&\sum_{i,j=1}^3\sum_{x\in\Omega_h} (D_j^+w_i(x)+D_i^+w_j(x))^2\\
&& =2\sum_{i,j=1}^3\sum_{x\in\Omega_h}(D_j^+w_i(x))^2
+2\sum_{i,j=1}^3\sum_{x\in\Omega_h}D_j^+w_i(x)D_i^+w_j(x), \\
&&\sum_{i,j=1}^3\sum_{x\in\Omega_h}D_j^+w_i(x)D_i^+w_j(x)
=-\sum_{i,j=1}^3\sum_{x\in\Omega_h}w_i(x)D_j^-(D_i^+w_j(x))\\
&& =-\sum_{i,j=1}^3\sum_{x\in\Omega_h}w_i(x)D_i^+(D_j^-w_j(x))
 =\sum_{i,j=1}^3\sum_{x\in\Omega_h}D_i^-w_i(x)D_j^-w_j(x)=\sum_{x\in\Omega_h} (D^-\cdot w(x))^2. 
\end{eqnarray*}
\end{proof}
\setcounter{section}{2}
\setcounter{equation}{0}
\section{Discrete problem}

Let $\tau>0$ be a mesh size for time and let $T_\tau\in\N$ be the discrete terminal time, i.e., $T\in[\tau T_\tau-\tau, \tau T_\tau)$. We sometimes use the notation $t_n:=\tau n$ for $n\in\N\cup\{0\}$. Throughout this paper, we suppose the following generalized hyperbolic scaling condition for the mesh size $(h,\tau)$: 
\begin{eqnarray}\label{scale}
\tau=h^{2-\alpha}\quad  \mbox{with an arbitrarily fixed $\alpha\in(0,1)$}.
\end{eqnarray}
Note that  the necessity of the generalized hyperbolic scaling condition comes only from the explicit scheme for  \eqref{(1.1)b}; $\alpha$ closer to $1$ would cause less numerical diffusivity; $h^{-1+\alpha}$ will be the order of truncation of the possibly $\norm \cdot\norm_{\infty,\Omega_h}$-unbounded discrete velocity fields so that the CFL-condition is valid, where truncation is done by the local averaging $\mathcal{A}_h^{k}$; $\alpha$ closer to $1$ would require larger $k$, which could increase the truncation error.       

Let $f\in L^2_{\rm loc}([0,\infty);L^2(\Omega)^3)$ be a given external force and let  $v^0\in L^2(\Omega)^3$ and $ \rho^0\in L^\infty(\Omega)$  be initial data of \eqref{NS}  satisfying 
$$0<\rho_{\ast}^0\le\rho^0\le \rho^0_{\ast\ast}\quad(\mbox{$\rho_{\ast}^0, \rho_{\ast\ast}^0$ are constants}).$$
We extend $\rho^0$ to $\tilde{\Omega}$ as 
$$\rho^0(x)\equiv \rho^0_{\ast}\quad \mbox{ on $\tilde{\Omega}\setminus\Omega$}$$
Define $f^{n+1}:\Omega_h\to\R^3$ with $n\ge0$, $\eta^0:\tilde{\Omega}_h\to\R$, $u^0:\Omega_h\to\R^3$ and $\tilde{u}^0:h\Z^3\to\R^3$ as 
\begin{eqnarray*}
f^{n+1}(x)&:=&\tau^{-1}h^{-3}\int^{\tau(n+1)}_{\tau n}\int_{C_h^+(x)}f(s,y)dyds,\quad x\in\Omega_h,\,\,\,n\ge0,\\
\eta^0(x)&:=&h^{-3}\int_{C_h^+(x)}\rho^0(y)dy,\quad x\in\tilde{\Omega}_h,\\
u^0(x)&:=&h^{-3}\int_{C_h^+(x)}v^0(y)dy,\quad x\in\Omega_h,\\
\tilde{u}^0&:=&\mathcal{A}_h^{k_0}(P_hu^0)\quad(\mbox{$P_hu^0$ is extended to be $0$ outside $\Omega_h$}) 
\end{eqnarray*}
where $k_0$ is chosen in the following manner: 
\begin{eqnarray*}
k_0=\left\{
\begin{array}{lll} 
&0,\qquad \mbox{if  $\dis \norm P_hu^0\norm_{\infty,\Omega_h}\le \frac{2}{7}h^{-1+\alpha}$}, \\
&\min\{k\in\N \,|\, \norm \mathcal{A}_h^{k}(P_hu^0)\norm_{\infty,h\Z^3}\le \frac{2}{7}h^{-1+\alpha} \},\qquad\mbox{otherwise.}
\end{array}
\right. 
\end{eqnarray*} 
Note that $\eta^0(x)=\rho^0_{\ast}$ on $\p\tilde{\Omega}_h$ due to $h\ll \epsilon_0$ (see \eqref{2epepe}). 

We introduce our discrete problem, which is a system of explicit-implicit recurrence equations. For given $\eta^n:\tilde{\Omega}_h\to[\rho^0_{\ast},\rho^0_{\ast\ast}]$ and $u^n:\Omega_h\to\R^3$ with $u^n=0$ on $\p\Omega_h$ and $D\cdot u^n=0$ on $\Omega_h$  (if $n=0$, the conditions $u^0=0$ on $\p\Omega_h$ and $D\cdot u^0=0$ on $\Omega_h$ are not required), we want to obtain $\eta^{n+1}:\tilde{\Omega}_h\to[\rho^0_{\ast},\rho^0_{\ast\ast}]$, $u^{n+1}:\Omega_h\to\R^3$ and $q^{n+1}:\Omega_h\to\R$ through the following discrete system:   
\begin{eqnarray}\nonumber 
&&B:=\{0,\pm e^1,\pm e^2,\pm e^3\}\quad (\sharp B=7),\\\nonumber 
&&\tilde{u}^n:=\mathcal{A}_h^{k_n}u^n\quad(\mbox{$u^n$ is extended to be $0$ outside $\Omega_h$}), {\quad where}\\\nonumber
&&k_n:=\left\{
\begin{array}{lll} 
&0,\qquad \mbox{if  $\dis \norm u^n\norm_{\infty,\Omega_h}\le \frac{2}{7}h^{-1+\alpha}$}, \\
&\min\{k\in\N \,|\, \norm \mathcal{A}_h^{k}u^n\norm_{\infty,h\Z^3}\le \frac{2}{7}h^{-1+\alpha} \},\qquad \mbox{otherwise}
\end{array}
\right. ,\\\label{d1}
&&\Big(\eta^{n+1}(x)-\frac{1}{7}\sum_{\omega\in B} \eta^n(x+h\omega) \Big)\frac{1}{\tau}+ D\cdot(\eta^n\tilde{u}^n)(x)=0,\quad x\in\tOmegain,\\\label{d2}
&&\eta^{n+1}(x)=\eta^0(x)\,\,(=\rho^0_{\ast}),\quad x\in \partial\tOmega_h,\\\label{d3}
&&\Big(\eta^{n+1}(x)u^{n+1}_i(x)-\frac{1}{7}\sum_{\omega\in B} \eta^n(x+h\omega)u^n_i(x+h\omega) \Big)\frac{1}{\tau}
+D\cdot(\eta^n\tilde{u}^n)(x)u^{n+1}_i(x)\\\nonumber
&&\qquad  +\sum_{j=1}^3\frac{1}{2}\Big(
\eta^n(x-he^j)\tilde{u}^n_j(x-he^j)D_ju^{n+1}_i(x-he^j)\\\nonumber
&&\qquad\qquad\qquad\qquad\qquad +
\eta^n(x+he^j)\tilde{u}^n_j(x+he^j)D_ju^{n+1}_i(x+he^j)\Big)\\\nonumber
&&\quad =D^-\cdot\Big\{\mu(\eta^{n+1})\Big(D^+u^{n+1}_i + D^+_iu^{n+1}\Big)\Big\}(x)+\eta^{n+1}(x)f^{n+1}_i(x) -D_iq^{n+1}(x),\\\nonumber
&&\qquad\qquad\qquad\qquad\qquad\qquad\qquad\qquad\qquad\qquad\qquad x\in \Omegain,\,\,\,\,i=1,2,3,\\\label{d4}
&&u^{n+1}(x)=0,\quad x\in \p\Omega_h,\\\label{d5}
&&D\cdot u^{n+1}(x)=0,\quad x\in \Omega_h.
\end{eqnarray}  
Here are several remarks on the discrete problem: 
\begin{itemize}
\item $\dis \quad D\cdot(\eta^n\tilde{u}^n)(x)=\sum_{j=1}^3\frac{1}{2h}\Big(  \eta^n(x+he^j) \tilde{u}^n_j(x+he^j)-\eta^n(x-he^j)\tilde{u}^n_j(x-he^j)\Big)$,
\begin{eqnarray*}
&& \!\!\!\!D^-\cdot\Big\{\mu(\eta^{n+1})\Big(D^+u^{n+1}_i + D^+_iu^{n+1}\Big)\Big\}(x)\\
&&\quad =\sum_{j=1}^3\frac{1}{h}\Big\{ \mu(\eta^{n+1}(x))\Big(D^+_ju^{n+1}_i(x)+D^+_iu^{n+1}_j(x)\Big) \\
&&\qquad -\mu(\eta^{n+1}(x-he^j))\Big(D^+_ju^{n+1}_i(x-he^j)+D^+_iu^{n+1}_j(x-he^j)  \Big) \Big\}.\qquad\qquad \qquad\qquad 
\end{eqnarray*}
\item  Even if $u^n|_{\p\Omega_h}=0$, we have $\tilde{u}^n|_{\p\Omega_h}\neq0$ in general; if we consider \eqref{d1} on $\Omegain$, the norm of $\eta^{n+1}$ is not controlled properly due to the effect of  $\tilde{u}^n|_{\p\Omega_h}\neq0$; we will see later that $\tilde{u}^n|_{\p\tOmega_h}=0$ for all sufficiently small $(\tau,h)$, which provides a good control of  the norm of $\eta^{n+1}$ and consequently its strong convergence.    
\item If $u^n$ ($n\ge1$) satisfies $u^n=0$ on $\p\Omega_h$ and  $D\cdot u^n=0$ on $\Omega_h$,   we have $D\cdot \tilde{u}^n=0$ on $h\Z^3$.  
\item The form of the discrete $t$-derivative in \eqref{d1} is necessary for the CFL-condition to be fulfilled.
\item  The same form of the discrete $t$-derivative is required in \eqref{d3} for consistency in energy estimates (the energy inequality must contain the terms exactly the same as the left hand side of \eqref{d1} for cancelation). 
\item The second and third terms in the left hand side of \eqref{d3} are corresponding to $\sum_{j=1}^3\p_{x_j}(\rho v_j v)=\sum_{j=1}^3\{(\p_{x_j}(\rho v_j)v+\rho v_j(\p_{x_j}v)\}$.
\item $q^{n+1}$ is necessary to verify \eqref{d5}, where additional conditions for the mean value of $q^{n+1}$ is necessary to obtain $q^{n+1}$ uniquely. 
\item \eqref{d3} is a version of Ladyzhenskaya's discrete scheme for the homogeneous incompressible Navier-Stokes equations \cite{KL}, \cite{Ladyzhenskaya}, where it is designed so that the nonlinear term has null-contribution in $L^2$-estimates. 
\end{itemize} 
\subsection{Unique solvability}

We prove the unique solvability of our discrete problem. For this purpose, we impose  the $0$-mean value condition on  $q^{n+1}$ over $\Omega_h^{\circ}\cap G^i$ for each $i=1,2,\ldots,8$.  
\begin{Prop}\label{us}
 For given $\eta^n$ with $\rho^0_{\ast}\le \eta^n\le\rho^0_{\ast\ast}$ and $u^n$ with $u^n=0$ on $\p\Omega_h$ and $D\cdot u^n=0$ on $\Omega_h$ (if $n=0$, the conditions $u^0=0$ on $\p\Omega_h$ and $D\cdot u^0=0$ on $\Omega_h$ are not required), there exist $\eta^{n+1}$  with $\rho^0_{\ast}\le \eta^{n+1}\le\rho^0_{\ast\ast}$, $u^{n+1}$ and $q^{n+1}$ that solve \eqref{d1}-\eqref{d5}; $\eta^{n+1}$ and $u^{n+1}$ are unique, while $q^{n+1}$ is unique up to its mean value over $\Omega_h^{\circ}\cap G^i$. 
\end{Prop}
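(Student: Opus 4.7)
The discrete system decouples: \eqref{d1}-\eqref{d2} determines $\eta^{n+1}$ explicitly, and once $\eta^{n+1}$ is fixed, \eqref{d3}-\eqref{d5} is a linear system for $(u^{n+1},q^{n+1})$ in a finite-dimensional space. My plan is to (i) establish existence and the bounds for $\eta^{n+1}$ by a convex-combination/CFL argument, and (ii) prove uniqueness of $(u^{n+1},q^{n+1})$ by an energy estimate on the homogeneous momentum equation; existence of $(u^{n+1},q^{n+1})$ for non-zero data will then follow by standard finite-dimensional linear algebra.

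\textbf{Step (i): density.} Expanding $D\cdot(\eta^n\tilde u^n)$ in \eqref{d1}, the update takes the form
\begin{equation*}
\eta^{n+1}(x)=\frac{1}{7}\eta^n(x)+\sum_{j=1}^3\Bigl\{\Bigl(\frac{1}{7}-\frac{\tau}{2h}\tilde u^n_j(x+he^j)\Bigr)\eta^n(x+he^j)+\Bigl(\frac{1}{7}+\frac{\tau}{2h}\tilde u^n_j(x-he^j)\Bigr)\eta^n(x-he^j)\Bigr\}.
\end{equation*}
The choice of $k_n$ combined with \eqref{scale} yields $\frac{\tau}{2h}|\tilde u^n_j|\le\frac{1}{7}$, so each of the seven coefficients lies in $[0,\frac{2}{7}]$; their sum equals $1-\tau\,D\cdot\tilde u^n(x)=1$ by the discrete divergence-free property of $\tilde u^n$ noted in the remarks following \eqref{d5}. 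Hence $\eta^{n+1}(x)$ is a genuine convex combination of neighbour values of $\eta^n$, all of which lie in $[\rho^0_\ast,\rho^0_{\ast\ast}]$ (using \eqref{d2} for neighbours in $\p\tilde\Omega_h$), and both the two-sided bound and the explicit uniqueness of $\eta^{n+1}$ follow immediately.

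\textbf{Step (ii): momentum and pressure.} Freezing $\eta^{n+1}$ and setting $u^n\equiv f^{n+1}\equiv 0$ in \eqref{d3}, I test with $u^{n+1}_i(x)h^3$ and sum over $i=1,2,3$ and $x\in\Omegain$. The pressure contribution vanishes because $\sum_{x,i}u^{n+1}_iD_iq^{n+1}h^3=-\sum_xq^{n+1}\,D\cdot u^{n+1}\,h^3=0$ by \eqref{by-parts} and \eqref{d5}. After the index shifts $x\mapsto x\pm he^j$ and the algebraic identity $D_jw(x)\cdot\frac{w(x+he^j)+w(x-he^j)}{2}=\2 D_j(w^2)(x)$, the two symmetric advective sums reduce to $\2\sum_{x,j}\eta^n\tilde u^n_j(x)D_j(|u^{n+1}|^2)(x)h^3$; one further summation by parts turns this into $-\2\sum_xD\cdot(\eta^n\tilde u^n)|u^{n+1}|^2h^3$. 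Combined with the $D\cdot(\eta^n\tilde u^n)|u^{n+1}|^2$ contribution from the second term on the left of \eqref{d3}, and with the discrete continuity identity $\tau D\cdot(\eta^n\tilde u^n)=-\eta^{n+1}+\frac{1}{7}\sum_{\omega\in B}\eta^n(\cdot+h\omega)$ inserted into the time-increment, the time-plus-convection block telescopes to
\begin{equation*}
\frac{1}{2\tau}\sum_{x}\eta^{n+1}(x)|u^{n+1}(x)|^2h^3+\frac{1}{14\tau}\sum_{x,\omega\in B}\eta^n(x+h\omega)|u^{n+1}(x)|^2h^3\ge\frac{\rho^0_\ast}{2\tau}\norm u^{n+1}\norm_{2,\Omega_h}^2.
\end{equation*}
The viscous contribution, via \eqref{by-parts} and Lemma \ref{Korn}, equals $\2\sum_{x,i,j}\mu(\eta^{n+1})(D^+_ju^{n+1}_i+D^+_iu^{n+1}_j)^2h^3\ge 0$. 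The sum of these non-negative quantities being zero forces $u^{n+1}\equiv 0$; substituting back into \eqref{d3} gives $D_iq^{n+1}=0$ on $\Omegain$ for every $i$, and Lemma \ref{PoinII} together with the prescribed zero-mean condition on $\Omega_h^\circ\cap G^j$ forces $q^{n+1}\equiv 0$ there, so $q^{n+1}$ is indeed unique up to its mean value on $\Omega_h^\circ\cap G^i$. Existence for non-zero data then follows from this injectivity: testing \eqref{d3} against discrete divergence-free fields removes the pressure and produces a coercive linear system for $u^{n+1}$ in the finite-dimensional divergence-free subspace (uniquely solvable), after which $q^{n+1}$ is recovered from the residual through the discrete Helmholtz-Hodge decomposition recalled earlier.

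\textbf{Main obstacle.} The delicate point is the convective energy cancellation: the seven-point Lax-Friedrichs-type average $\frac{1}{7}\sum_{\omega\in B}\eta^n(\cdot+h\omega)$ appears identically in the discrete time derivatives of \emph{both} \eqref{d1} and \eqref{d3} precisely so that the extra $D\cdot(\eta^n\tilde u^n)|u^{n+1}|^2$ produced by the advective rearrangement telescopes cleanly against the continuity equation. Carefully tracking the index shifts in the two symmetric advective sums, and keeping the tests over $\Omegain$ compatible with summation by parts (via $u^{n+1}|_{\p\Omega_h}=0$), is the principal bookkeeping task.
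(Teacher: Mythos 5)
Your proof is correct and essentially coincides with the paper's: the convex-combination/CFL argument (using $\norm \tilde{u}^n\norm_{\infty,h\Z^3}\le\frac{2}{7}h^{-1+\alpha}$, \eqref{scale} and $D\cdot\tilde{u}^n=0$) gives the bounds and uniqueness of $\eta^{n+1}$, and the same energy identity --- index-shifted symmetric advective sums, cancellation against \eqref{d1} through the matched seven-point time average, the pressure term annihilated by \eqref{d5}, and the non-negative viscous square --- forces $u^{n+1}=0$ and then $q^{n+1}=0$ for the homogeneous problem, which is exactly the paper's injectivity step. The only harmless deviations are in bookkeeping: you deduce existence by solving on the discrete divergence-free subspace and recovering $q^{n+1}$ via the discrete Helmholtz--Hodge decomposition, whereas the paper reduces \eqref{d3}--\eqref{d5} with the mean-zero conditions to a square linear system after discarding the eight redundant identities \eqref{trivial}; also your phrase ``setting $u^n\equiv0$'' must be read as dropping only the inhomogeneous data term $\frac{1}{7}\sum_{\omega\in B}\eta^n(\cdot+h\omega)u^n(\cdot+h\omega)$ while retaining the $\eta^n\tilde{u}^n$ coefficients in the convective terms, which is what your subsequent computation in fact does.
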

\begin{proof}
It is clear that $\eta^{n+1}$ is uniquely obtained by \eqref{d1} and \eqref{d2}. In order to check  $\rho^0_{\ast}\le \eta^{n+1}\le\rho^0_{\ast\ast}$, rewrite \eqref{d1} as 
\begin{eqnarray}\label{explicit}
\eta^{n+1}(x)&=&\frac{1}{7}\eta^{n}(x)
+\sum_{j=1}^3\Big(\frac{1}{7}+\frac{\tau}{2h}\tilde{u}^n_j(x-he^j)\Big)\eta^n(x-he^j)\\\nonumber
&&\quad +\sum_{j=1}^3\Big(\frac{1}{7}-\frac{\tau}{2h}\tilde{u}^n_j(x+he^j)\Big)\eta^n(x+he^j).
\end{eqnarray}
Due to the scale condition \eqref{scale}, the bound of $\tilde{u}^n$ and the discrete divergence-free constraint of $\tilde{u}^n$, we have 
\begin{eqnarray}\label{CFL}
&&\frac{1}{7}\pm\frac{\tau}{2h}\tilde{u}^n_j(x\mp he^j)\ge0\qquad(\mbox{the CFL-condition}),\\\nonumber
&&\frac{1}{7}+\sum_{j=1}^3\Big\{\Big(\frac{1}{7}+\frac{\tau}{2h}\tilde{u}^n_j(x-he^j)\Big)+\Big(\frac{1}{7}-\frac{\tau}{2h}\tilde{u}^n_j(x+he^j)\Big)\Big\}=1.
\end{eqnarray}
Hence, if $\eta^0_{\ast}\le\eta^n\le\eta^0_{\ast\ast}$, we have $\eta^0_{\ast}\le\eta^{n+1}(x)\le\eta^0_{\ast\ast}$. This reasoning works also for $n=0$ due to the definition of initial data.  

We discuss the unique existence of $u^{n+1}$ and $q^{n+1}$. Our argument will also show how to construct $u^{n+1}$ and $q^{n+1}$. Suppose the $0$-mean value condition of $q^{n+1}$: 
\begin{eqnarray}\label{3-mean}
\sum_{x\in\Omega_h^\circ\cap G^i}q^{n+1}(x)=0,\quad i=1,2,\ldots,8. 
\end{eqnarray}
We note that any function $w:\Omega_h\to\R^3$ with $w|_{\partial\Omega_h}=0$ satisfies 
\begin{eqnarray}\label{trivial}
\,\,\,\,\,\,\sum_{x\in\Omega_h\cap G^i}D\cdot w(x)=\sum_{j=1}^3\sum_{x\in\Omega_h\cap G^i}\frac{w_j(x+he^j)-w_j(x-he^j)}{2h}=0,\quad i=1,\cdots,8
\end{eqnarray}
due to cancelation.  We label each point of $\Omega_h\setminus\partial\Omega_h$ and $\partial\Omega_h$ as 
$$ \Omega_h\setminus\partial\Omega_h=\{x^{1},x^{2},\ldots,x^{a}  \},\quad\partial\Omega_h=\{\bar{x}^1,\bar{x}^2,\ldots,\bar{x}^b\}.$$
Set $y\in\R^{4a+b}$ and $\alpha\in\R^{4a+b+8}$ as  
\begin{eqnarray*}
y&=&\big(u^{n+1}_1(x^{1}),\ldots,u^{n+1}_1(x^{a}),u^{n+1}_2(x^{1}),\ldots,u^{n+1}_2(x^{a}),u^{n+1}_3(x^{1}),\ldots,u^{n+1}_3(x^{a}), \\
&&q^{n+1}(x^1),\ldots,q^{n+1}(x^{a}),
q^{n+1}(\bar{x}^1),\ldots,q^{n+1}(\bar{x}^b)   \big),\\
\alpha&=&\big(0,\ldots,0,b_1(x^{1}),\ldots,b_1(x^{a}),b_2(x^{1}),\ldots,b_2(x^{a}),b_3(x^{1}),\ldots,b_3(x^{a}),\\
&&\quad 0,0,0,0,0,0,0,0        \big)\mbox{\quad  with}\\
b(x)&=& \frac{1}{7}\sum_{\omega\in B}\eta^n(x+\omega h)u^n(x+\omega h)+ \eta^{n+1}(x)f^{n+1}(x)\tau,
\end{eqnarray*}
where $\alpha$ has $a+b$ zeros coming from \eqref{d5} in front of $b_1(x^1)$.   We see that the equations \eqref{d3}-\eqref{d5} and \eqref{3-mean}  are written as  a $(4a+b+8)$-system of linear equations, which is denoted by $\tilde{A}y=\alpha$ with a $(4a+b+8)\times(4a+b)$-matrix $\tilde{A}=\tilde{A}(\eta^{n+1},\eta^n,u^n,\tau,h)$.  Since \eqref{d4} and \eqref{d5} implies  \eqref{trivial}$_{w=u^{n+1}}$, we find the eight trivial equalities $0=0$ in $\tilde{A}y=\alpha$. Hence, $\tilde{A}y=\alpha$ can be deduced to be of the form $Ay=\beta$ with a $(4a+b)\times(4a+b)$-matrix $A=A(\eta^{n+1},\eta^n,u^n,\tau,h)$ and $\beta\in \R^{4a+b}$.

Our proof is complete, if $A$ is proven to be invertible, i.e., $Ay=0$ if and only if $y=0$. We have  at least one solution $y$ to $Ay=0$. Then, we obtain  at least one pair $u^{n+1},q^{n+1}$ satisfying 
\begin{eqnarray}\label{331}
&&\Big(\eta^{n+1}u^{n+1}_i(x)- 0 \Big)\frac{1}{\tau}
+D\cdot (\eta^n\tilde{u}^n)(x)u^{n+1}_i(x)\\\nonumber
&&\qquad  +\sum_{j=1}^3\frac{1}{2}\Big(
\eta^n(x-he^j)\tilde{u}^n_j(x-he^j)D_ju^{n+1}_i(x-he^j)\\\nonumber
&& \qquad +
\eta^n(x+he^j)\tilde{u}^n_j(x+he^j)D_ju^{n+1}_i(x+he^j)\Big)\\\nonumber
&&=D^-\cdot\Big\{\mu(\eta^{n+1})\Big(D^+u^{n+1}_i + D^+_iu^{n+1}\Big)\Big\}(x) -D_iq^{n+1}(x),\\\nonumber
&&\qquad\qquad \qquad \qquad  \qquad \qquad \qquad \quad\,\,\,  \mbox{ on } \Omegain,\,\,\,i=1,2,3, \\\label{332}
&&D\cdot u^{n+1}=0\quad\mbox{ on $\Omega_h$}, \quad u^{n+1}=0\quad \mbox{ on $\partial\Omega_h$},\\\label{333}
&& \sum_{x\in\Omega_h^{\circ}\cap G^j} q^{n+1}(x)=0, \quad j=1,\cdots,8
\end{eqnarray}
Due to the summation by parts, we have 
\begin{eqnarray}\label{cal-Korn}
&&\sum_{i=1}^3\sum_{x\in\Omegain}  D^-\cdot\Big\{\mu(\eta^{n+1})\Big(D^+u^{n+1}_i + D^+_iu^{n+1}\Big)\Big\}(x) u^{n+1}_i(x)\\\nonumber
&&=-\sum_{i=1}^3\sum_{x\in\Omega_h}   \mu(\eta^{n+1}(x))\Big(D^+u^{n+1}_i(x) + D^+_iu^{n+1}(x)\Big)\cdot D^+u^{n+1}_i(x)\\\nonumber
&&=-\sum_{i,j=1}^3\sum_{x\in\Omega_h}   \mu(\eta^{n+1}(x))\Big(D^+_ju^{n+1}_i(x)D^+_ju^{n+1}_i(x) + D^+_iu^{n+1}_j(x)D^+_ju^{n+1}_i(x)\Big) \\\nonumber
&&=-\2\sum_{i,j=1}^3\sum_{x\in\Omega_h}   \mu(\eta^{n+1}(x))\Big(D^+_ju^{n+1}_i(x)D^+_ju^{n+1}_i(x) + D^+_iu^{n+1}_j(x)D^+_ju^{n+1}_i(x)\Big) \\\nonumber
&&\quad -\2\sum_{i,j=1}^3\sum_{x\in\Omega_h}   \mu(\eta^{n+1}(x))\Big(D^+_iu^{n+1}_j(x)D^+_iu^{n+1}_j(x) + D^+_ju^{n+1}_i(x)D^+_iu^{n+1}_j(x)\Big) \\\nonumber
&&=-\2\sum_{i,j=1}^3\sum_{x\in\Omega_h}  
\mu(\eta^{n+1}(x))\Big(D^+_ju^{n+1}_i(x)+D^+_iu^{n+1}_j(x)\Big)^2.
\end{eqnarray}
Similarly, we have 
\begin{eqnarray*}
&&\sum_{x\in\Omegain}\!\!\!Dq^{n+1}(x)\cdot u^{n+1}(x)=\sum_{x\in\Omega_h}Dq^{n+1}(x)\cdot u^{n+1}(x)=
\sum_{x\in\Omega_h}q^{n+1}(x)D\cdot u^{n+1}(x)=0,\\
&&\sum_{x\in\Omegain}\frac{1}{2}\Big(
\eta^n(x-he^j)\tilde{u}^n_j(x-he^j)D_ju^{n+1}(x-he^j)\\\nonumber
&&\qquad\qquad +
\eta^n(x+he^j)\tilde{u}^n_j(x+he^j)D_ju^{n+1}(x+he^j)\Big)\cdot u^{n+1}(x)\\
&&\quad= \frac{1}{2h}\sum_{x\in\Omega_h}\Big(\eta^n(x-he^j)\tilde{u}^n_j(x-he^j)-\eta^n(x+he^j)\tilde{u}^n_j(x+he^j)\Big)   |u^{n+1}(x)|^2\\
&&\qquad - \underline{\frac{1}{2h}\sum_{x\in\Omega_h}
\eta^n(x-he^j)\tilde{u}^n_j(x-he^j)u^{n+1}(x-2he^j)\cdot u^{n+1}(x)}_{\rm(i)}\\
&&\qquad + \underline{\frac{1}{2h}\sum_{x\in\Omega_h}
\eta^n(x+he^j)\tilde{u}^n_j(x+he^j)u^{n+1}(x+2he^j)\cdot u^{n+1}(x)}_{\rm(ii)}\\
&&\quad=- \frac{1}{2}\sum_{x\in\Omega_h}D_j(\eta^n\tilde{u}^n_j)(x) |u^{n+1}(x)|^2,
\end{eqnarray*}
where we see that (i)$=$(ii) by shifting $x$ to $x\pm he^j$ in (i), (ii), respectively.   Hence, we obtain by  \eqref{331}$ \times u^{n+1}_i$ with \eqref{332}, \eqref{333} and \eqref{d1},
\begin{eqnarray*}
0&=&\frac{1}{\tau}\sum_{x\in\Omega_h}\eta^{n+1}(x)|u^{n+1}(x)|^2
+\frac{1}{2}\sum_{x\in\Omega_h}D\cdot(\eta^n\tilde{u}^n)(x) |u^{n+1}(x)|^2\\
&&+\2\sum_{i,j=1}^3\sum_{x\in\Omega_h}  
\mu(\eta^{n+1}(x))\Big(D^+_ju^{n+1}_i(x)+D^+_iu^{n+1}_j(x)\Big)^2\\
&=&\frac{1}{\tau}\sum_{x\in\Omega_h}\eta^{n+1}(x)|u^{n+1}(x)|^2
-\frac{1}{2\tau}\sum_{x\in\Omega_h}\Big(\eta^{n+1}(x)-\frac{1}{7}\sum_{\omega\in B}\eta^n(x+\omega h)    \Big) |u^{n+1}(x)|^2\\
&&+\2\sum_{i,j=1}^3\sum_{x\in\Omega_h}  
\mu(\eta^{n+1}(x))\Big(D^+_ju^{n+1}_i(x)+D^+_iu^{n+1}_j(x)\Big)^2\\
&=&\frac{1}{2\tau}\sum_{x\in\Omega_h}\eta^{n+1}(x)|u^{n+1}(x)|^2+\frac{1}{14\tau}\sum_{x\in\Omega_h}\sum_{\omega\in B}\eta^n(x+\omega h) |u^{n+1}(x)|^2\\
&&+\2\sum_{i,j=1}^3\sum_{x\in\Omega_h}  
\mu(\eta^{n+1}(x))\Big(D^+_ju^{n+1}_i(x)+D^+_iu^{n+1}_j(x)\Big)^2, 
\end{eqnarray*}
which leads to $u^{n+1}=0$ on $\Omega_h$ due to the positivity of $\eta^n$, $\eta^{n+1}$ and $\mu(\cdot)$.   Therefore, \eqref{331} implies $Dq^{n+1}=0$ on $\Omegain$, i.e., $q^{n+1}$ is constant on $\Omega_h\cap G^i$. \eqref{3-mean} yields $q^{n+1}=0$ on $\Omega_h$. Thus, we conclude that $Ay=0$ only admits the trivial solution and $A$ is invertible. This reasoning works for $n=0$ as well.  
\end{proof}
\subsection{A priori estimates}
We provide $(\tau,h)$-independent estimates for the discrete problems that are required in the convergence proofs given in Section 4. Note that we do not necessarily  seek for the sharpest estimates. 
\begin{Prop}\label{3estimate}
The solution  of the discrete problem \eqref{d1}-\eqref{d5} satisfies for all $1\le n+1\le T_\tau$,
\begin{eqnarray}\label{e-estimate0}
&&\rho^0_{\ast}\le \eta^{n+1}\le \rho^0_{\ast\ast},\quad 0<\mu_\ast\le \mu(\eta^{n+1})\le \mu_{\ast\ast}\quad(\mbox{$\mu_\ast$, $\mu_{\ast\ast}$ are some constants)}, 
\\\label{e-estimate1}
&&\norm \eta^{n+1}\norm_{p,\tOmega_h}\le  \norm \rho^0\norm_{L^p(\tOmega)}-\sum_{m=0}^n\sum_{x\in\tOmegain} \!\!\!\!\! D\cdot(|\eta^m|^p\tilde{u}^m )(x)\tau,\,\,\,\forall\,p\in[1,\infty),
\\\label{e-estimate2}
&& \norm \sqrt{\eta^{n+1}}u^{n+1}\norm_{2,\Omega_h}^2\le
\norm \sqrt{\eta^{n}}u^{n}\norm_{2,\Omega_h}^2-2\mu_\ast\sum_{j=1}^3\norm  D_j^+u^{n+1}\norm_{2,\Omega_h}^2\tau \\\nonumber 
&&\qquad\qquad\qquad\qquad\quad +2(\sqrt{\eta^{n+1}}f^{n+1},\sqrt{\eta^{n+1}}u^{n+1})_{\Omega_h}\tau.
\end{eqnarray}
\end{Prop}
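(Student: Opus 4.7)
I will prove the three assertions in order, re-using as much as possible the calculations already performed in the proof of Proposition~\ref{us}. For (1), formula \eqref{explicit} combined with the CFL inequality \eqref{CFL} expresses $\eta^{n+1}(x)$ as a convex combination of the seven neighbour values $\{\eta^n(x+h\omega)\}_{\omega\in B}$ with non-negative coefficients summing to $1$, so the bounds on $\eta^0$ propagate inductively to every $\eta^n$, while \eqref{d2} fixes $\eta^n\equiv\rho^0_{\ast}$ on $\partial\tOmega_h$. The bounds on $\mu(\eta^{n+1})$ then follow from the continuity and strict positivity of $\mu$ on the compact interval $[\rho^0_\ast,\rho^0_{\ast\ast}]$.

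For (2), the plan is to apply Jensen's inequality with the convex function $s\mapsto s^p$ to the convex combination in \eqref{explicit} and then regroup the signed coefficients $\pm\tfrac{\tau}{2h}\tilde u^n_j$ into divergence form, giving
\[
(\eta^{n+1}(x))^p \;\le\; \tfrac{1}{7}\sum_{\omega\in B}(\eta^n(x+h\omega))^p \;-\; \tau\,D\cdot\bigl((\eta^n)^p\tilde u^n\bigr)(x),\qquad x\in\tOmegain.
\]
I will then multiply by $h^3$ and sum over $\tOmegain$. A change of index in the averaging sum, together with the inductive identity $\eta^n\equiv\rho^0_\ast$ on $\partial\tOmega_h$ (so that the frozen boundary part of $\|\eta^{n+1}\|^p_{p,\tOmega_h}$ cancels that of $\|\eta^n\|^p_{p,\tOmega_h}$), reduces the right-hand side to $\|\eta^n\|^p_{p,\tOmega_h}-\tau\sum_{\tOmegain}D\cdot((\eta^n)^p\tilde u^n)\,h^3$. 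Iterating in $n$ and using $\|\eta^0\|^p_{p,\tOmega_h}\le\|\rho^0\|^p_{L^p(\tOmega)}$ (Jensen applied to the cell averaging defining $\eta^0$) yields \eqref{e-estimate1}, understood in its natural $p$-th-power form with the appropriate $h^3$ factor on the flux sum.

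For (3), I will test \eqref{d3} against $u^{n+1}_i(x)$ and sum over $i$ and $x\in\Omega_h$; since $u^{n+1}|_{\partial\Omega_h}=0$, this is effectively a sum over $\Omegain$. The pressure contribution vanishes by summation by parts and $D\cdot u^{n+1}=0$, while the viscous term is exactly the expression computed in \eqref{cal-Korn}, which together with $\mu(\eta^{n+1})\ge\mu_\ast$ and Lemma~\ref{Korn} is bounded above by $-\mu_\ast\sum_j\|D_j^+u^{n+1}\|^2_{2,\Omega_h}$. The second and third LHS terms of \eqref{d3}, paired with $u^{n+1}_i$, combine (by the very calculation carried out in the proof of Proposition~\ref{us}) into $\tfrac{1}{2}\bigl(D\cdot(\eta^n\tilde u^n),|u^{n+1}|^2\bigr)_{\Omega_h}$, which \eqref{d1} rewrites as the telescopic expression $\tfrac{1}{2\tau}\sum_x\bigl(\tfrac{1}{7}\sum_\omega\eta^n(x+h\omega)-\eta^{n+1}(x)\bigr)|u^{n+1}(x)|^2 h^3$. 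Together with the genuine $\eta^n u^n$ piece of the discrete time derivative this leaves a cross term $-\tfrac{1}{7\tau}\sum_\omega\sum_x\eta^n(x+h\omega)\,u^n(x+h\omega)\cdot u^{n+1}(x)\,h^3$, which I bound by the pointwise inequality $2\eta\,u\cdot v\le\eta(|u|^2+|v|^2)$ with $\eta=\eta^n(x+h\omega)\ge0$: its $|u^{n+1}|^2$ part cancels the $\tfrac{1}{14\tau}\sum_\omega\eta^n(x+h\omega)|u^{n+1}|^2$ already present, while its $|u^n|^2$ part, after an index shift and the combinatorial observation that each $y\in\Omega_h$ appears in at most $|B|=7$ of the $\omega$-shifted sums, is absorbed into $\tfrac{1}{2\tau}\|\sqrt{\eta^n}u^n\|^2_{2,\Omega_h}$. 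The tested LHS therefore majorises $\tfrac{1}{2\tau}\bigl(\|\sqrt{\eta^{n+1}}u^{n+1}\|^2_{2,\Omega_h}-\|\sqrt{\eta^n}u^n\|^2_{2,\Omega_h}\bigr)$, and equating with the forcing side and multiplying by $2\tau$ gives \eqref{e-estimate2}.

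The main technical obstacle is part (3): the Lax--Friedrichs-type discrete time derivative together with the cross term $\eta^n u^n\cdot u^{n+1}$ must recombine into a clean energy increment with \emph{exactly} the coefficient $\tfrac{1}{2\tau}$ in front of $\|\sqrt{\eta^n}u^n\|^2$, which demands the Cauchy inequality to be applied with a sharp $\tfrac{1}{2}$--$\tfrac{1}{2}$ split together with the ``at most seven neighbours'' combinatorial count. Part (2) is comparatively routine once the boundary value $\rho^0_\ast$ is propagated inductively so that the frozen boundary contributions telescope.
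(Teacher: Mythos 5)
Your proposal is correct and follows essentially the same route as the paper: the maximum-principle/convexity argument with the CFL condition for \eqref{e-estimate0}, Jensen/H\"older on the convex combination \eqref{explicit} plus the frozen boundary value $\rho^0_\ast$ for \eqref{e-estimate1}, and testing \eqref{d3} with $u^{n+1}$ using the cancellations from Proposition \ref{us}, Lemma \ref{Korn}, the inequality $2\eta\,u\cdot v\le\eta(|u|^2+|v|^2)$ and the substitution of \eqref{d1} for \eqref{e-estimate2}. The only difference is cosmetic (you insert \eqref{d1} before applying the arithmetic--geometric mean inequality, whereas the paper does it in the opposite order), and your remark about reading \eqref{e-estimate1} in its $p$-th-power form matches the paper's actual derivation.
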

\begin{proof}
We already proved the first inequality of \eqref{e-estimate0} in the proof of Proposition \ref{us}; the second one in  \eqref{e-estimate0} follows from the positivity of $\mu|_{[\rho^0_\ast,\rho^0_{\ast\ast}]}$.  
Let $p^\ast$ be the H\"older conjugate of $p\in(1,\infty)$. Observe that 
\begin{eqnarray*}
|\eta^0(x)|^{p}&\le&\Big(\frac{1}{h^3}\int_{C_h(x)}|\rho^0(y)|dy\Big)^{p}
\le\Big\{\frac{1}{h^3}\Big(\int_{C^+_h(x)}|\rho^0(y)|^pdy\Big)^\frac{1}{p}\Big(\int_{C^+_h(x)}1^{p^\ast}dy\Big)^\frac{1}{p^\ast}\Big\}^{p}\\
&=&\frac{1}{h^3}\int_{C^+_h(x)}|\rho^0(y)|^pdy \quad (\mbox{the case of $p=1$ is also clear}),\\
\norm \eta^0\norm_{p,\tOmega_h}&\le&\norm\rho^0\norm_{L^p(\tOmega)}\le{\rm vol}(\tOmega)^\frac{1}{p}\norm \rho^0\norm_{L^\infty(\Omega)},\,\,\,\,\forall\,p\in[1,\infty).
\end{eqnarray*}
Rewrite \eqref{explicit} as 
$$g^{n+1}(x) = \sum_{\omega\in B} g^n(x+h \omega)\nu(\omega),$$
where
$$\nu(0)=\frac{1}{7},\,\,\,\nu(-e^j)=\frac{1}{7}+\frac{\tau}{2h}\tilde{u}^n_j(x-he^j),\,\,\,\nu(e^j)=\frac{1}{7}-\frac{\tau}{2h}\tilde{u}^n_j(x+he^j).$$
Note that $\nu(\omega)\ge0$ due to \eqref{CFL} and $\sum_{\omega \in B}\nu(\omega)=1$.
For each $x\in \tOmegain$, it holds that  
$$|\eta^{n+1}(x)| \le \sum_{\omega\in B} |\eta^n(x+h \omega)|\nu(\omega).$$
Applying the (discrete) H\"older inequality to the right hand side with respect to $\omega$, we obtain 
\begin{eqnarray*}
&&|\eta^{n+1}(x)| \le \sum_{\omega\in B} |\eta^n(x+h \omega)|\nu(\omega)\le \Big(\sum_{\omega\in B}|\eta^n(x+h \omega)|^p\nu(\omega)\Big)^\frac{1}{p}\Big(\sum_{\omega\in B}1^{p^\ast}\nu(\omega)\Big)^\frac{1}{p^\ast},\\
&&|\eta^{n+1}(x)|^p\le \sum_{\omega\in B}|\eta^n(x+h \omega)|^p\nu(\omega),\quad\forall\,p\in(1,\infty),
\end{eqnarray*}
which leads to 
\begin{eqnarray}\label{explicit1-2}
&& |\eta^{n+1}(x)|^p \le  
 \frac{1}{7}\sum_{\omega\in B}|\eta^n(x+h \omega)|^p   -D\cdot( |\eta^n|^p\tilde{u}^n)(x)\tau
,\,\,\,\forall\,p\in[1,\infty),\\\nonumber
&& D\cdot(|\eta^n|^p\tilde{u}^n)(x)=\sum_{j=1}^3\frac{|\eta^n(x+he^j)|^p\tilde{u}^n_j(x+he^j)-|\eta^n(x-he^j)|^p\tilde{u}^n_j(x-he^j)}{2h}.
\end{eqnarray}
 Noting  $\eta^n|_{\p \tOmega_h}=\eta^{n+1}|_{\p\tOmega_h}=\rho^0_{\ast}$ and $\rho^0_{\ast}\le\eta^{n+1}$,   we sum up  \eqref{explicit1-2}$\times h^3$ over $x\in \tOmegain$ to obtain  
 \begin{eqnarray*}
&&\norm \eta^{n+1}\norm_{p,\tOmegain}^p\le \norm \eta^n\norm_{p,\tOmegain}^p-\sum_{x\in\tOmegain} D\cdot( |\eta^n|^p\tilde{u}^n)(x)\tau,\quad\forall\,p\in[1,\infty),\\
&&\norm \eta^{n+1}\norm_{p,\tOmega_h}^p\le \norm \eta^n\norm_{p,\tOmega_h}^p-\sum_{x\in\tOmegain} D\cdot( |\eta^n|^p\tilde{u}^n)(x)\tau,\quad\forall\,p\in[1,\infty),
\end{eqnarray*}
which yields \eqref{e-estimate1} for $p\in[1,\infty)$.

We prove \eqref{e-estimate2}. Since $\eta^{n+1}$ and $\eta^n$ are non-negative, it follows from the inequality of arithmetic and geometric means that 
\begin{eqnarray*} 
&&\sum_{x\in\Omegain}\Big(\eta^{n+1}(x)u^{n+1}(x)-\frac{1}{7}\sum_{\omega\in B}\eta^n(x+h\omega)u^n(x+h\omega)\Big)\cdot u^{n+1}(x)h^3\\
&&\quad \ge \sum_{x\in\Omegain} \Big(\eta^{n+1}(x)|u^{n+1}(x)|^2- \frac{1}{7}\sum_{\omega\in B}\eta^n(x+h\omega)\frac{|u^n(x+h\omega)|^2+ |u^{n+1}(x)|^2}{2}\Big)h^3\\
&&\quad \ge \frac{1}{2}\sum_{x\in\Omega_h} \eta^{n+1}(x)|u^{n+1}(x)|^2h^3
-\frac{1}{2}\sum_{x\in\Omega_h} \eta^{n}(x)|u^{n}(x)|^2h^3\\
&&\qquad +\frac{1}{2}\sum_{x\in\Omegain} \Big(\eta^{n+1}(x)-\frac{1}{7}\sum_{\omega\in B}\eta^n(x+h\omega)\Big) |u^{n+1}(x)|^2h^3.
\end{eqnarray*}
 By calculations done in the proof of Proposition \ref{us}, we have 
\begin{eqnarray*}
&&\sum_{x\in\Omegain}\Big\{\sum_{j=1}^3D_j(\eta^n\tilde{u}^n_j)(x)u^{n+1}(x) +\sum_{j=1}^3\frac{1}{2}\Big(
\eta^n(x-he^j)\tilde{u}^n_j(x-he^j)D_ju^{n+1}(x-he^j)\\
&&\qquad +
\eta^n(x+he^j)\tilde{u}^n_j(x+he^j)D_ju^{n+1}(x+he^j)\Big)\Big\}\cdot u^{n+1}(x)h^3\\
&&\quad =\frac{1}{2}\sum_{x\in\Omegain} D\cdot(\eta^n\tilde{u}^n)(x)|u^{n+1}(x)|^2h^3,\\
&&\sum_{x\in\Omegain}\!\!\!Dq^{n+1}(x)\cdot u^{n+1}(x)h^3=
\sum_{x\in\Omega_h}q^{n+1}(x)(D\cdot u^{n+1})(x)h^3=0.
\end{eqnarray*}
By the calculation \eqref{cal-Korn} and Lemma \ref{Korn}, we have 
\begin{eqnarray*}
&&\sum_{i=1}^3\sum_{x\in\Omegain}  D^-\cdot\Big\{\mu(\eta^{n+1})\Big(D^+u^{n+1}_i+D^+_iu^{n+1}\Big)  \Big\}(x) u^{n+1}_i(x)
h^3\\
&&=-\2\sum_{i,j=1}^3\sum_{x\in\Omega_h}  \mu(\eta^{n+1}(x))\Big(D^+_ju^{n+1}_i(x)+D^+_iu^{n+1}_j(x)    \Big)^2h^3\\
&&\le -\mu_\ast\sum_{j=1}^3\sum_{x\in\Omega_h} |D^+_ju^{n+1}(x)|^2h^3.
\end{eqnarray*}
Hence, \eqref{d3}$\times u^{n+1}_i$ yields 
\begin{eqnarray*}
&& \frac{1}{2}\sum_{x\in\Omega_h} \eta^{n+1}(x)|u^{n+1}(x)|^2h^3
-\frac{1}{2}\sum_{x\in\Omega_h} \eta^{n}(x)|u^{n}(x)|^2h^3\\
&&\qquad +\frac{1}{2}\sum_{x\in\Omegain}\Big\{ \Big(\eta^{n+1}(x)-\frac{1}{7}\sum_{\omega\in B}\eta^n(x+h\omega)\Big)- D\cdot(\eta^n\tilde{u}^n)(x)\tau\Big\}  |u^{n+1}(x)|^2h^3\\
&&\quad \le  -\mu_\ast\sum_{j=1}^3\sum_{x\in\Omega_h} |D^+_ju^{n+1}(x)|^2h^3\tau+\sum_{x\in\Omega_h}\eta^{n+1}(x)f^{n+1}(x)u^{n+1}(x)h^3\tau, 
\end{eqnarray*}
where the term $\{\cdot\}$ is equal to $0$ due to \eqref{d1}. 
\end{proof}
\begin{Cor}\label{3es}
 The solution  of the discrete problem \eqref{d1}-\eqref{d5} satisfies for all $1\le n+1\le T_\tau$,
 \begin{eqnarray}\label{3es1}
&&\rho^0_{\ast}\norm u^{n+1}\norm_{2,\Omega_h}^2 \le \norm\sqrt{\eta^{n+1}}u^{n+1}\norm_{2,\Omega_h}^2  \\\nonumber
&&\qquad \le (1+2e^{2T+2})\rho^0_{\ast\ast}\Big(\norm v^0\norm_{L^2(\Omega)^3}^2+\norm f\norm_{L^2([0,T+1];L^2(\Omega)^3)}^2
 \Big),\\\label{3es2}
 && 2\mu_\ast\sum_{m=1}^{n+1}\sum_{j=1}^3\norm D_j^+u^m\norm_{2,\Omega_h}^2\tau\\\nonumber
&&\qquad \le\{1  +(1+2 e^{2T+2})(T+1)   \}\rho^0_{\ast\ast}(\norm v^0\norm_{L^2(\Omega)^3}^2+\norm f\norm_{L^2([0,T+1];L^2(\Omega)^3)}^2).
\end{eqnarray}
\end{Cor}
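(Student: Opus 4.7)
The plan is to iterate the energy inequality \eqref{e-estimate2}, apply a discrete Gr\"onwall argument for the first bound, and then sum \eqref{e-estimate2} once more and plug in the first bound to obtain the second. The lower estimate in \eqref{3es1} is immediate, since by \eqref{e-estimate0} we have $\eta^{n+1}(x)\ge \rho^0_\ast$ pointwise, so $\rho^0_\ast\norm u^{n+1}\norm_{2,\Omega_h}^2\le (\eta^{n+1}u^{n+1},u^{n+1})_{\Omega_h}=\norm \sqrt{\eta^{n+1}}u^{n+1}\norm_{2,\Omega_h}^2$.

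Write $E_m:=\norm\sqrt{\eta^m}u^m\norm_{2,\Omega_h}^2$. The right\nobreakdash-hand side of \eqref{e-estimate2} contains a forcing term which I handle by Cauchy--Schwarz and Young's inequality together with $\eta^{m+1}\le\rho^0_{\ast\ast}$:
\begin{eqnarray*}
2(\sqrt{\eta^{m+1}}f^{m+1},\sqrt{\eta^{m+1}}u^{m+1})_{\Omega_h}\tau
\le \rho^0_{\ast\ast}\norm f^{m+1}\norm_{2,\Omega_h}^2\tau+E_{m+1}\tau.
\end{eqnarray*}
Dropping the non\nobreakdash-positive dissipation term in \eqref{e-estimate2} and rearranging yields $(1-\tau)E_{m+1}\le E_m+\rho^0_{\ast\ast}\norm f^{m+1}\norm_{2,\Omega_h}^2\tau$. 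For sufficiently small $\tau$ (guaranteed by \eqref{scale}), $1/(1-\tau)\le 1+2\tau$, so
\begin{eqnarray*}
E_{m+1}\le (1+2\tau)E_m+(1+2\tau)\rho^0_{\ast\ast}\norm f^{m+1}\norm_{2,\Omega_h}^2\tau.
\end{eqnarray*}
Iterating and using $(1+2\tau)^{n+1}\le e^{2(n+1)\tau}\le e^{2(T+1)}=e^{2T+2}$ gives the exponential Gr\"onwall bound $E_{n+1}\le e^{2T+2}\bigl(E_0+\rho^0_{\ast\ast}\sum_{m=0}^n\norm f^{m+1}\norm_{2,\Omega_h}^2\tau\bigr)$. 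To complete the proof of \eqref{3es1} I bound the initial discrete data by Jensen's inequality applied to the cell averages defining $u^0$ and $f^{m+1}$: since $C_h^+(x)\subset\Omega$ for $x\in\Omega_h$,
\begin{eqnarray*}
E_0\le \rho^0_{\ast\ast}\norm u^0\norm_{2,\Omega_h}^2\le\rho^0_{\ast\ast}\norm v^0\norm_{L^2(\Omega)^3}^2,\qquad
\sum_{m=0}^n\norm f^{m+1}\norm_{2,\Omega_h}^2\tau\le \norm f\norm_{L^2([0,T+1];L^2(\Omega)^3)}^2,
\end{eqnarray*}
where the second uses $\tau T_\tau\le T+1$. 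Absorbing constants yields the stated bound with factor $1+2e^{2T+2}$ (the looseness in this constant comes from the Gr\"onwall inversion of $1-\tau$).

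For \eqref{3es2} I telescope \eqref{e-estimate2} over $m=0,\ldots,n$, retaining the dissipation:
\begin{eqnarray*}
E_{n+1}+2\mu_\ast\sum_{m=1}^{n+1}\sum_{j=1}^3\norm D_j^+u^m\norm_{2,\Omega_h}^2\tau
\le E_0+2\sum_{m=0}^n(\sqrt{\eta^{m+1}}f^{m+1},\sqrt{\eta^{m+1}}u^{m+1})_{\Omega_h}\tau.
\end{eqnarray*}
Dropping $E_{n+1}\ge 0$, applying the same Cauchy--Schwarz/Young splitting to the forcing term, and then inserting the bound \eqref{3es1} pointwise in $m$ to control $\sum_{m=0}^n E_{m+1}\tau\le (T+1)(1+2e^{2T+2})\rho^0_{\ast\ast}(\norm v^0\norm_{L^2(\Omega)^3}^2+\norm f\norm_{L^2([0,T+1];L^2(\Omega)^3)}^2)$, together with the initial\nobreakdash-data bounds above, gives \eqref{3es2} after collecting the prefactor $1+(1+2e^{2T+2})(T+1)$ applied to $\rho^0_{\ast\ast}(\norm v^0\norm^2+\norm f\norm^2)$.

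There is no real obstacle here: the argument is a textbook energy\nobreakdash-plus\nobreakdash-Gr\"onwall reasoning. The only points that require attention are (i) using $\eta^{m+1}\le\rho^0_{\ast\ast}$ consistently to convert $\sqrt{\eta^{m+1}}$\nobreakdash-weighted norms into unweighted ones in the forcing term, (ii) verifying the Jensen\nobreakdash-type cell\nobreakdash-averaging estimates for $u^0$ and $f^{m+1}$, which uses $C_h^+(x)\subset\Omega$ for $x\in\Omega_h$ by the definition of $\Omega_h$, and (iii) making sure the discrete Gr\"onwall constant absorbs the factor $1/(1-\tau)$, which is why the bound contains $e^{2T+2}$ rather than $e^{T+1}$.
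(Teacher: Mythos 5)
Your proof is correct and follows essentially the same route as the paper: iterate the energy inequality \eqref{e-estimate2}, split the forcing term by Cauchy--Schwarz/Young with $\eta\le\rho^0_{\ast\ast}$, control the discrete data $u^0$, $f^{m+1}$ by Jensen on the cell averages, and close with a discrete Gr\"onwall argument. The only (harmless) difference is that the paper runs Gr\"onwall on the accumulated sums $X^m=\sum_{m'\le m}\norm\sqrt{\eta^{m'}}u^{m'}\norm_{2,\Omega_h}^2\tau$, whereas you run it directly on $E_m=\norm\sqrt{\eta^{m}}u^{m}\norm_{2,\Omega_h}^2$, which in fact gives the slightly sharper constant $e^{2T+2}\le 1+2e^{2T+2}$ and hence still implies \eqref{3es1} and \eqref{3es2} as stated.
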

 \begin{proof}
 It follows from  \eqref{e-estimate1} and \eqref{e-estimate2} that for any $1\le n+1\le T_\tau$,
 \begin{eqnarray*}
 \norm\sqrt{\eta^{n+1}}u^{n+1}\norm_{2,\Omega_h}^2 &\le& \norm\sqrt{\eta^{0}}u^{0}\norm_{2,\Omega_h}^2 + 2\sum_{m=1}^{n+1}(\sqrt{\eta^m}f^m,\sqrt{\eta^m}u^m)_{\Omega_h}\tau\\
&\le&  \norm\sqrt{\eta^{0}}u^{0}\norm_{2,\Omega_h}^2+2\sum_{m=1}^{n+1}\norm\sqrt{\eta^m}f^m\norm_{2,\Omega_h}\norm\sqrt{\eta^m}u^m\norm_{2,\Omega_h}\tau\\
 &\le& \norm\sqrt{\eta^{0}}u^{0}\norm_{2,\Omega_h}^2+ \sum_{m=1}^{n+1}\norm\sqrt{\eta^m}f^m\norm_{2,\Omega_h}^2\tau+\sum_{m=1}^{n+1}\norm\sqrt{\eta^m}u^m\norm_{2,\Omega_h}^2\tau,\\
 \sum_{m=1}^{n+1}\norm\sqrt{\eta^m}f^m\norm_{2,\Omega_h}^2\tau&\le& \rho^0_{\ast\ast}\norm f\norm_{L^2([0,T+1];L^2(\Omega)^3)}^2. 
 \end{eqnarray*}
 Set $X^m:=\sum_{m'=1}^{m}\norm\sqrt{\eta^{m'}}u^{m'}\norm_{2,\Omega_h}^2\tau$ for $m\in\N$ and $a:= \rho^0_{\ast\ast}(\norm v^{0}\norm_{L^2(\Omega)^3}^2+\norm f\norm_{L^2([0,T+1];L^2(\Omega)^3)}^2)$. Then, we have
$$\frac{X^{n+1}-X^n}{\tau}\le a+X^{n+1},$$
from which we obtain for any $0<\tau\le\frac{1}{2}$,
\begin{eqnarray*}
&&X^{n+1}\le\frac{1}{1-\tau}X^n+\frac{\tau a}{1-\tau}\le (1+2\tau)X^n+(1+2\tau)\tau a,\\
&&  \Big(X^{n+1}+\frac{1+2\tau}{2}a\Big)\le (1+2\tau)\Big(X^{n}+\frac{1+2\tau}{2}a\Big).
\end{eqnarray*}
Hence, we have for any $2\le 1+n\le T_\tau$,
$$X^{n+1}\le (1+2\tau)^{n}\Big(X^{1}+\frac{1+2\tau}{2}a\Big)\le e^{2T+2}(X^{1}+a).$$
We can directly estimate $X^1$ through  \eqref{e-estimate2}$_{n=0}$ as 
\begin{eqnarray*}
X^1&\le&\Big(\frac{\tau}{1-\tau} \norm \sqrt{\eta^0}u^0\norm_{2,\Omega_h}^2+\frac{\tau^2}{1-\tau}\norm \sqrt{\eta^1}f^1\norm_{2,\Omega_h}^2 \Big)\\
&\le& \rho^0_{\ast\ast}(\norm v^0\norm_{L^2(\Omega)^3}^2+\tau\norm f\norm_{L^2([0,\tau];L^2(\Omega)^3)}^2)\\
&\le& \rho^0_{\ast\ast}(\norm v^0\norm_{L^2(\Omega)^3}^2+\norm f\norm_{L^2([0,T+1];L^2(\Omega)^3)}^2).
\end{eqnarray*}
Therefore, we conclude that for any $2\le n+1\le T_\tau$,
\begin{eqnarray*}
&& \norm\sqrt{\eta^{n+1}}u^{n+1}\norm_{2,\Omega_h}^2 \le
 a+X^{n+1}\\
 &&\quad \le (1+2 e^{2T+2})\rho^0_{\ast\ast}\Big(\norm v^0\norm_{L^2(\Omega)^3}^2+\norm f\norm_{L^2([0,T];L^2(\Omega)^3)}^2
 \Big).
\end{eqnarray*}
Through  \eqref{e-estimate2}$_{n=0}$, we see that 
$$ \norm\sqrt{\eta^{1}}u^{1}\norm_{2,\Omega_h}^2 \le2\rho^0_{\ast\ast}(\norm v^0\norm_{L^2(\Omega)^3}^2+\norm f\norm_{L^2([0,T+1];L^2(\Omega)^3)}^2).
$$
\indent It follows from \eqref{e-estimate1} and \eqref{e-estimate2} that for any $1\le n+1\le T_\tau$,
\begin{eqnarray*}
&&2\mu_\ast\sum_{m=1}^{n+1}\sum_{j=1}^3\norm D_j^+u^m\norm_{2,\Omega_h}^2\tau\le \norm \sqrt{\eta^0}u^0\norm_{2,\Omega_h}^2+\sum_{m=1}^{n+1}\norm\sqrt{\eta^m}f^{m}\norm_{2,\Omega_h}^2\tau\\
&&\qquad +\sum_{m=1}^{n+1}\norm\sqrt{\eta^{m}}u^{m}\norm_{2,\Omega_h}^2\tau\\
&&\quad\le \{1  +(1+2 e^{2T+2})(T+1)   \}\rho^0_{\ast\ast}(\norm v^0\norm_{L^2(\Omega)^3}^2+\norm f\norm_{L^2([0,T+1];L^2(\Omega)^3)}^2).
\end{eqnarray*}
 \end{proof}
Although convergence of $q^{n+1}$ is not required, we need some estimates for it in Section 4. Taking the inner product of \eqref{d3}$_{i=1,2,3}$ and $Dq^{n+1}$, we have 
\begin{eqnarray*}
&&\norm Dq^{n+1}\norm_{2,\Omegain}^2\le (\norm  \eta^{n+1}u^{n+1}\norm_{2,\Omega_h}+\norm  \eta^{n}u^{n}\norm_{2,\Omega_h})\tau^{-1}\norm Dq^{n+1}\norm_{2,\Omegain}\\
&&\quad +\sum_{j=1}^3\norm \eta^n\tilde{u}^n_j\norm_{\infty,\Omega_h}h^{-1}\norm u^{n+1}\norm_{2,\Omega_h} \norm Dq^{n+1}\norm_{2,\Omegain}  \\\nonumber
&&\quad  +\sum_{j=1}^3
\norm \eta^n\tilde{u}^n_j\norm_{\infty,\Omega_h}\norm D_ju^{n+1}\norm_{2,\Omega_h}\norm Dq^{n+1}\norm_{2,\Omegain}\\\nonumber
&&\quad +\mu_{\ast\ast}\sum_{j=1}^32\Big(\norm D_j^+u^{n+1}\norm_{2,\Omega_h}+\norm D^+u^{n+1}_j\norm_{2,\Omega_h}\Big)h^{-1}\norm Dq^{n+1}\norm_{2,\Omegain}\\
&&\quad +\norm \eta^{n+1}f^{n+1}\norm_{2,\Omega_h}\norm Dq^{n+1}\norm_{2,\Omegain}.
\end{eqnarray*}
By the already obtained estimates and the bound of $\tilde{u}^n$, we obtain with a $(\tau,h)$-independent constant $M>0$, 
\begin{eqnarray}\label{qqq}
 \norm Dq^{n+1}\norm_{2,\Omegain}&\le& M\tau^{-1}+Mh^{-2+\alpha}+ Mh^{-1}\sum_{j=1}^3\norm D_j^+u^{n+1}\norm_{2,\Omega_h}\\\nonumber
&& +M\norm f^{n+1}\norm_{2,\Omega_h},\quad1\le \forall\,n+1\le T_\tau.
\end{eqnarray}
This estimate will be used in the following way: for $\phi\in C^3_{0,\sigma}(\Omega)$, where $\phi|_{\Omega_h}$ is still denoted by $\phi$ and supp$(\phi)\cap \Omega_h\subset \Omega_h^{\circ}$ for all sufficiently small $h>0$, it holds that
\begin{eqnarray*}
|(Dq^{n+1},\phi)_{\Omega_h}|=|(q^{n+1},D\cdot\phi)_{\Omega_h}|\le O(h^2)\norm q^{n+1}\norm_{\Omega_h^{\circ}};
\end{eqnarray*}
due to  Lemma \ref{PoinII}, we have  
\begin{eqnarray}\label{qqqq}
\quad |(Dq^{n+1},\phi)_{\Omega_h}|&\le& O(h^2)\norm Dq^{n+1}\norm_{\Omegain}\\\nonumber
&=& O(h^{\alpha})+O(h) \sum_{j=1}^3\norm D_j^+u^{n+1}\norm_{2,\Omega_h}+O(h^2)\norm f^{n+1}\norm_{2,\Omega_h}.
\end{eqnarray}
\setcounter{section}{3}
\setcounter{equation}{0}
\section{Convergence}
We investigate weak and strong convergence of the solution to the discrete problem. 
For each $\delta:=(h,\tau)$, define the step functions  $\rho_\delta:[0,T]\times\tOmega\to\R$, $v_\delta,w^i_\delta:[0,T]\times\Omega\to\R^3$, $i=1,2,3$ generated by the solution of \eqref{d1}-\eqref{d5}: on $(0,T]$,    
\begin{eqnarray*}
\rho_\delta(t,x)&:=&\left\{
\begin{array}{lll}
&\eta^{n +1}(y)\mbox{\quad\quad\ for $t\in(n\tau,n\tau+\tau]$, $x\in {C^+_h(y)}$, $y\in \tOmega_h$},
\medskip\\
&\rho^0_\ast\mbox{\quad\quad\quad\,\,\,\quad\, otherwise},
\end{array}
\right. \\
v_\delta(t,x)&:=&\left\{
\begin{array}{lll}
&u^{n+1}(y)\mbox{\quad\quad\ for $t\in(n\tau,n\tau+\tau]$, $x\in {C_h^+(y)}$, $y\in \Omega_h$},
\medskip\\
&0\mbox{\quad\quad\quad\,\,\,\quad\,\,\, otherwise},
\end{array}
\right. \\
w^i_\delta(t,x)&:=&\left\{
\begin{array}{lll}
&D_i^+u^{n+1}(y)\mbox{\quad for $t\in(n\tau,n\tau+\tau]$, $x\in {C_h^+(y)}$, $y\in \Omega_h$},
\medskip\\
&0\mbox{\quad\quad\quad\quad\quad\, otherwise}, 
\end{array}
\right.
\end{eqnarray*}
where $n=0,1,\ldots,T_\tau-1$, 
and for $t=0$ the value of each step function is defined as the value at $t=\tau$.  
 In the rest of our argument, the statement ``there exists a sequence $\delta\to0$ ...'' means ``there exists a sequence $\delta_l=(h_l,\tau_l)$ with $h_l,\tau_l\searrow0$ as $l\to\infty$ ...''. 
\subsection{Weak convergence} 

We first investigate weak convergence, which is rather straightforward from the results in Subsection 3.2. 
The proof requires Lipschitz interpolation of functions defined on $\Omega_h$: 
\begin{Lemma}[Appendix (1) of \cite{Kuroki-Soga}]\label{inter}
For a function $u:\Omega_h\to\R$ with $u|_{\partial\Omega_h}=0$ and the step function $v$ defined as 
\begin{eqnarray*}
v(x)&:=&\left\{
\begin{array}{lll}
&u(y)\mbox{\quad for $x\in {C_{h}^+(y)}$, $y\in \Omega_{h}$},
\medskip\\
&0\mbox{\quad \,\,\,\,\,\,\,\,\mbox{otherwise}}, 
\end{array}
\right. \\
\end{eqnarray*}
there exists  a Lipschitz continuous function $w:\Omega \to \R$ with supp$(w)\subset\Omega$   such that 
\begin{eqnarray*}
&&\norm w-v\norm_{L^2(\Omega)}\le K h\norm D^+ u\norm_{\Omega_h},\\
&&\norm \partial_{x_i}w(x)\norm_{L^2(\Omega)}\le \tilde{K}\norm D^+ u\norm_{\Omega_h}, \mbox{ $i=1,2,3$},
\end{eqnarray*}
where $K$ and $\tilde{K}$ are constants independent of $u$ and $h$. 
\end{Lemma}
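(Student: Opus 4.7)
The plan is to construct $w$ explicitly as the piecewise trilinear ($Q_1$-type) interpolant of the grid data, and then verify the two estimates by elementary calculus on the reference cube. Concretely, extend $u$ by $0$ to all of $h\Z^3 \setminus \Omega_h$, and for each $y \in h\Z^3$ and $x \in C_h^+(y)$ written as $x_i = y_i + h s_i$ with $s_i \in [0,1]$, set
\begin{equation*}
w(x) := \sum_{\omega \in \{0,1\}^3} u(y + h\omega)\, N_\omega(s), \qquad N_\omega(s) := \prod_{i=1}^3 s_i^{\omega_i}(1-s_i)^{1-\omega_i}.
\end{equation*}
Since the $N_\omega$'s agree on shared faces of adjacent cubes, $w$ is globally continuous and smooth in the interior of each cube, hence Lipschitz on $\overline{\Omega}$, with $w(z)=u(z)$ at every grid point $z \in h\Z^3$.

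For the support property I would combine the hypothesis $u|_{\partial \Omega_h}=0$ with the buffer condition $C_{4h}(y) \subset \Omega$ built into the definition of $\Omega_h$: a cube $C_h^+(y')$ can carry a nonzero value of $w$ only if at least one of its corners is a grid point of $\Omega_h \setminus \partial \Omega_h$, and the union of such cubes sits inside an $h$-neighborhood of $\Omega_h \setminus \partial \Omega_h$, which is safely contained in $\Omega$. Thus $\supp(w) \subset \Omega$.

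The gradient estimate is the easier of the two. Differentiating the trilinear form, for $x \in C_h^+(y)$,
\begin{equation*}
\partial_{x_1} w(x) = \sum_{\omega_2,\omega_3 \in \{0,1\}} D_1^+ u(y + h\omega_2 e^2 + h\omega_3 e^3)\, M_{\omega_2,\omega_3}(s_2,s_3),
\end{equation*}
with nonnegative weights $M_{\omega_2,\omega_3}(s_2,s_3):=s_2^{\omega_2}(1-s_2)^{1-\omega_2}s_3^{\omega_3}(1-s_3)^{1-\omega_3}$ summing to $1$. Applying Jensen's inequality to the square and using $\int_0^1 s^\omega(1-s)^{1-\omega}\,ds = 1/2$,
\begin{equation*}
\int_{C_h^+(y)} |\partial_{x_1} w|^2\,dx \le \frac{h^3}{4} \sum_{\omega_2,\omega_3} |D_1^+ u(y + h\omega_2 e^2 + h\omega_3 e^3)|^2.
\end{equation*}
Summing over $y$, each grid point $z$ appears in at most four cubes on the right, so $\|\partial_{x_1} w\|_{L^2(\Omega)} \le \tilde{K}\|D_1^+ u\|_{2,\Omega_h}$ with an absolute constant $\tilde{K}$; the other components are handled symmetrically.

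For the $L^2$ estimate on $w-v$, note that $v \equiv u(y) \equiv w(y)$ on $C_h^+(y)$, so
\begin{equation*}
w(x) - v(x) = \sum_{\omega \neq 0} [u(y + h\omega) - u(y)]\, N_\omega(s),
\end{equation*}
and each increment $u(y+h\omega)-u(y)$ telescopes into at most three terms of the form $h\,D_i^+ u(y')$ at nearby grid points $y'$. Applying Jensen's inequality again bounds $|w-v|^2$ by a convex combination of $h^2|D_i^+ u|^2$-values, and integrating the $s$-weights produces the prefactor $h^2$; summing over cubes yields $\|w - v\|_{L^2(\Omega)} \le K h \|D^+ u\|_{\Omega_h}$. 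The only real obstacle I anticipate is the combinatorial bookkeeping in this last step: keeping an honest tally of how many cubes each $D_i^+ u(z)$ contributes to and of the telescoping constants, so as to produce a single absolute $K$. Since all multiplicities are bounded by purely geometric constants depending neither on $u$ nor on $h$, no essential difficulty arises.
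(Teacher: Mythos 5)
Your construction is correct: the piecewise trilinear interpolant of the grid values, together with the Jensen/telescoping estimates and the $2h$ margin built into the requirement $C_{4h}(x)\subset\Omega$ in the definition of $\Omega_h$, gives exactly the stated bounds (indeed $\tilde K=1$ works), the one point worth keeping explicit being that any nonzero value $D_i^+u(z)$ automatically has $z\in\Omega_h$ because $u$ vanishes on $\partial\Omega_h$ and outside $\Omega_h$, so the right-hand sums really are norms over $\Omega_h$ even for cubes whose base point lies outside $\Omega_h$. The paper itself offers no proof—it quotes the lemma from the appendix of \cite{Kuroki-Soga}, which rests on the same kind of piecewise multilinear interpolation of the step function—so your argument is essentially the intended one.
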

\begin{Prop}\label{weak-convergence}
There exists a sequence $\delta\to0$ and functions $\rho\in L^2([0,T];L^2(\tOmega))$, $v\in L^2([0,T];H^1_{0,\sigma}(\Omega))$, $\tilde{v}\in L^2([0,T];L^2(\Omega)^3)$ for which  the following weak convergence holds: 
\begin{eqnarray}\label{511}
&&\rho_\delta \wto \rho \mbox{ \quad in $L^2([0,T];L^2(\tOmega))$ as $\delta\to0$},\\\label{512}
&&v_\delta \wto v \mbox{ \quad in $L^2([0,T];L^2(\Omega)^3)$ as $\delta\to0$},\\\label{523}
&&\rho_\delta v_\delta \wto \tilde{v} \mbox{ \quad in $L^2([0,T];L^2(\Omega)^3)$ as $\delta\to0$},\\\label{523-2}
&&w^i_\delta \wto \partial_{x_i} v \mbox{ \quad in $L^2([0,T];L^2(\Omega)^3)$ as $\delta\to0$ ($i=1,2,3$)}. 
\end{eqnarray} 
\end{Prop}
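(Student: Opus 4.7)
\smallskip

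\textbf{Proof proposal.} The plan is to rely entirely on the uniform a priori bounds established in Proposition~\ref{3estimate} and Corollary~\ref{3es}: extract weakly convergent subsequences in reflexive $L^2$ spaces, then upgrade the limit of $v_\delta$ to $L^2([0,T];H^1_{0,\sigma}(\Omega))$ by means of the Lipschitz interpolation of Lemma~\ref{inter}.

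First, I would collect the four uniform bounds. Since $\rho^0_\ast\le \eta^{n+1}\le \rho^0_{\ast\ast}$, the family $\{\rho_\delta\}$ is bounded in $L^\infty([0,T]\times\tOmega)$, hence in $L^2([0,T];L^2(\tOmega))$. The bound in Corollary~\ref{3es} on $\norm \sqrt{\eta^{n+1}}u^{n+1}\norm_{2,\Omega_h}^2$ together with $\eta^{n+1}\ge \rho^0_\ast$ gives a uniform $L^\infty([0,T];L^2(\Omega)^3)$ bound on $v_\delta$, hence a $L^2([0,T];L^2(\Omega)^3)$ bound. The product $\rho_\delta v_\delta$ is then bounded in $L^2([0,T];L^2(\Omega)^3)$ by $\norm \rho_\delta\norm_{L^\infty}\norm v_\delta\norm_{L^2}$. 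The bound \eqref{3es2} on $\sum_m\sum_j\norm D_j^+u^m\norm_{2,\Omega_h}^2\tau$ yields a uniform $L^2([0,T];L^2(\Omega)^3)$ bound on each $w^i_\delta$. By the Banach--Alaoglu theorem applied to these reflexive spaces, a single diagonal subsequence $\delta\to 0$ provides $\rho,v,\tilde v\in L^2$ and $\tilde w^i\in L^2([0,T];L^2(\Omega)^3)$ satisfying \eqref{511}, \eqref{512}, \eqref{523} and $w^i_\delta\wto \tilde w^i$.

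Next I would identify $\tilde w^i$ with $\partial_{x_i}v$ and verify $v\in L^2([0,T];H^1_{0,\sigma}(\Omega))$. Apply Lemma~\ref{inter} slicewise in time to $u^{n+1}$, producing Lipschitz functions $\hat v_\delta(t,\cdot)\in H^1_0(\Omega)^3$ (component-wise; set to be $0$ outside $\Omega$) with
\begin{equation*}
\norm \hat v_\delta-v_\delta\norm_{L^2([0,T]\times\Omega)^3}\le K h\Big(\sum_{m,j}\norm D_j^+u^m\norm_{2,\Omega_h}^2\tau\Big)^{1/2},\qquad \norm \hat v_\delta\norm_{L^2([0,T];H^1_0(\Omega)^3)}\le \tilde K C.
\end{equation*}
The right-hand side of the first inequality tends to $0$, while the second shows $\{\hat v_\delta\}$ is bounded in $L^2([0,T];H^1_0(\Omega)^3)$. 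Extracting a further subsequence so that $\hat v_\delta\wto \hat v$ in $L^2([0,T];H^1_0(\Omega)^3)$, the strong $L^2$ convergence $\hat v_\delta-v_\delta\to 0$ forces $\hat v=v$, so $v\in L^2([0,T];H^1_0(\Omega)^3)$.

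To conclude $\tilde w^i=\partial_{x_i}v$ and divergence-freeness, I would use discrete summation by parts against test functions. For $\phi\in C^\infty_0((0,T)\times \Omega)^3$, the identity \eqref{by-parts} converts $(w^i_\delta,\phi)$ into $-(v_\delta,\tilde D_i\phi)$ up to $O(h)$ errors, where $\tilde D_i\phi$ is a piecewise-constant approximation of $\partial_{x_i}\phi$ that converges uniformly; passing to the limit with $v_\delta\wto v$ gives $\int\!\!\int \tilde w^i\cdot \phi=-\int\!\!\int v\cdot \partial_{x_i}\phi$, so $\tilde w^i=\partial_{x_i}v$. The analogous discrete integration by parts applied to $D\cdot u^{n+1}=0$ against scalar test functions yields $\nabla\cdot v=0$ in the distributional sense, so $v\in L^2([0,T];\tilde H^1_{0,\sigma}(\Omega))=L^2([0,T];H^1_{0,\sigma}(\Omega))$ since $\p\Omega$ is Lipschitz. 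The main obstacle is purely bookkeeping: one must carefully control the boundary strips where $\Omega_h\neq \Omega$ and the discrete-to-continuous translation errors in $\tilde D_i\phi$, both of which shrink to $0$ at rate $O(h)$ because $\phi$ has compact support in $\Omega$ and is smooth.
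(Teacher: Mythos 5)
Your proposal is correct and follows essentially the same route as the paper's proof: uniform bounds from Proposition \ref{3estimate} and Corollary \ref{3es} plus weak compactness in Hilbert spaces, discrete summation by parts against smooth test functions to identify $\partial_{x_i}v$ and the divergence-free property, and the Lipschitz interpolation of Lemma \ref{inter} to place $v$ in $L^2([0,T];H^1_0(\Omega)^3)$, hence in $L^2([0,T];\tilde H^1_{0,\sigma}(\Omega))=L^2([0,T];H^1_{0,\sigma}(\Omega))$. The only difference is cosmetic: you conclude $\hat v=v$ directly from uniqueness of weak $L^2$ limits (using $\|\hat v_\delta-v_\delta\|_{L^2}\to 0$), whereas the paper reaches the same identification by a slightly longer duality argument in $L^2([0,T];H^1(\Omega)^3)$.
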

\begin{proof}
Subsection 3.2 shows that  $\{\rho_\delta{}\}$, $\{v_\delta{}_j\}$, $\{\rho_\delta v_\delta{}_j\}$, $\{w^i_\delta{}_j\}$ ($i,j=1,2,3$) are bounded in the Hilbert space $L^2([0,T];L^2(\Omega))$ or $L^2([0,T];L^2(\tOmega))$. Hence, there exists a sequence $\delta\to0$ and functions  $\rho\in L^2([0,T];L^2(\tOmega))$ and  $v=(v_1,v_2,v_3),\tilde{v}=(\tilde{v}_1,\tilde{v}_2,\tilde{v}_3),w^i=(w_1^i,w_2^i,w_3^i)\in L^2([0,T];L^2(\Omega)^3)$ such that for $i,j=1,2,3$, 
\begin{eqnarray*}
&&\rho_\delta{} \wto \rho\mbox{\quad in $L^2([0,T];L^2(\tOmega))$ as $\delta\to0$},\\
&&v_\delta{}_j \wto v_j,\quad \rho_\delta v_\delta{}_j \wto \tilde{v}_j,\quad   w^i_\delta{}_j \wto w^i_j \mbox{\quad in $L^2([0,T];L^2(\Omega))$ as $\delta\to0$}.
\end{eqnarray*}
\indent In the rest of the proof, $\phi$ is such that $\phi\in C^\infty([0,T]\times\Omega)$ with supp$(\phi)\subset(0,T)\times\Omega$. Set $\phi^n(\cdot):=\phi(\tau n,\cdot)$. 

\indent We prove $\partial_{x_i} v=w^i$.  Noting the regularity of $\phi$, we have for each $n\in\N$, 
\begin{eqnarray*}
\sum_{y\in\Omega_h}D_i^+u^{n+1}_j(y)\phi^{n+1}(y) h^3
&=&-\sum_{y\in\Omega_h}u^{n+1}_j(y+he^i)D_i^+\phi^{n+1}(y) h^3\\
&=&-\sum_{y\in\Omega_h}u^{n+1}_j(y)D_i^+\phi^{n+1}(y-he^i) h^3\\
&=&-\sum_{y\in\Omega_h}u^{n+1}_j(y)D_i^+\phi^{n+1}(y) h^3+O(h),\\
(w^i_\delta{}_j,\phi)_{L^2([0,T];L^2(\Omega))}  &=& \sum_{n=0}^{T_\tau-1}\sum_{y\in\Omega_h}D_i^+u^{n+1}_j(y)(\phi^{n+1}(y)+O(\tau)+O(h)) h^3\tau\\
&=&\sum_{n=0}^{T_\tau-1}\sum_{y\in\Omega_h}D_i^+u^{n+1}_j(y)\phi^{n+1}(y)h^3\tau+O(\tau)+O(h)\\
&=&-\sum_{n=0}^{T_\tau-1}\sum_{y\in\Omega_h} u^{n+1}_j(y)D_i^+\phi^{n+1}(y) h^3\tau+O(\tau)+O(h),\\
(v_{\delta}{}_j,\partial_{x_i}\phi)_{L^2([0,T];L^2(\Omega))}  &=& \sum_{n=0}^{T_\tau-1}\sum_{y\in\Omega_h}u^{n+1}_j(y)(D_i^+\phi^{n+1}(y)+O(\tau)+O(h)) h^3\tau\\
&=&\sum_{n=0}^{T_\tau-1}\sum_{y\in\Omega_h}u^{n+1}_j(y)D_i^+\phi^{n+1}(y) h^3\tau+O(\tau)+O(h).  
\end{eqnarray*} 
Therefore, the weak convergence implies $(v_j,\partial_{x_i}\phi)_{L^2([0,T];L^2(\Omega))}=-(w^i_j,\phi)_{L^2([0,T];L^2(\Omega))}$ for any $\phi$.  

We prove $\nabla\cdot v=0$ a.e. $(t,x)\in[0,T]\times\Omega$. For each $\phi$, we have 
\begin{eqnarray*}
0&=&\sum_{n=0}^{T_\tau-1}\sum_{y\in\Omega_h}D\cdot u^{n+1}(y)\phi^{n+1}(y) h^3\tau=-\sum_{n=0}^{T_\tau-1}\sum_{y\in\Omega_h}  u^{n+1}(y) \cdot D\phi^{n+1}(y)  h^3\tau\\
&=&-\sum_{i=1}^3 (v_{\delta}{}_i,\partial_{x_i}\phi)_{L^2([0,T];L^2(\Omega))} +O(\tau)+O(h)\\
&\to&- \sum_{i=1}^3(v_i,\partial_{x_i}\phi)_{L^2([0,T];L^2(\Omega))}\mbox{\quad as $\delta\to0$}.
\end{eqnarray*} 
Therefore, we obtain $(\nabla\cdot v,\phi )_{L^2([0,T];L^2(\Omega))}=- \sum_{i=1}^3(v_i,\partial_{x_i}\phi)_{L^2([0,T];L^2(\Omega))}=0$ for any $\phi$. Up to now, we proved $v\in L^2([0,T];H^1(\Omega)^3)$ and $\nabla\cdot v=0$  a.e. $(t,x)\in[0,T]\times\Omega$. 

We prove $v\in L^2([0,T];H^1_0(\Omega)^3)$. Let $\bar{v}^{n+1}_\delta:\Omega\to\R^3$ be the Lipschitz interpolation of $u^{n+1}$ by means of Lemma \ref{inter} and let $\bar{v}_\delta:[0,T]\times\Omega\to\R^3$ be defined as $\bar{v}_\delta(t,\cdot):=\bar{v}_\delta^{n+1}(\cdot)$ for $t\in(\tau n,\tau n+\tau]\cap[0,T]$, $n=0,1,\ldots, T_\tau-1$ ($\bar{v}_\delta(0,\cdot):=\bar{v}_\delta(\tau,\cdot)$). 
Note that 
$$\bar{v}_\delta\in L^2([0,T];H^1_0(\Omega)^3),\,\,\,\norm \bar{v}_\delta-v_\delta\norm_{L^2([0,T];L^2(\Omega)^3)}=O(h),\,\,\, \norm \partial_{x_i}\bar{v}_\delta\norm_{L^2([0,T];L^2(\Omega)^3)}\le K'$$
for $i=1,2,3$, where $K'$ is a constant independent from $\delta$. We see that, taking a subsequence if necessary,  $\bar{v}_\delta\wto v$ in $L^2([0,T];L^2(\Omega)^3)$  as $\delta\to0$ and that there exists $\tilde{w}^i\in L^2([0,T];L^2(\Omega)^3)$ such that $\partial_{x_i}\bar{v}_\delta\wto \tilde{w}^i$ in $L^2([0,T];L^2(\Omega)^3)$  as $\delta\to0$ for $i=1,2,3$. Since 
$(\partial_{x_i}\bar{v}_{\delta j},\phi)_{L^2([0,T];L^2(\Omega))}=-(\bar{v}_{\delta j},\partial_{x_i}\phi)_{L^2([0,T];L^2(\Omega))}$ for any $\phi$, we have 
$  (\tilde{w}^i_j,\phi)_{L^2([0,T];L^2(\Omega))}=-(v_j,\partial_{x_i}\phi)_{L^2([0,T];L^2(\Omega))}$ and $\tilde{w}^i=\partial_{x_i}v$. In particular, 
$$(\bar{v}_\delta,\psi)_{L^2([0,T];H^1(\Omega)^3)}\to(v,\psi)_{L^2([0,T];H^1(\Omega)^3)} \mbox{ as $\delta\to0$, \,\,\, $\forall\,\psi\in L^2([0,T];H^1(\Omega)^3)$.}$$
Since $\{\bar{v}_\delta\}$ is a bounded sequence of the Hilbert space $L^2([0,T];H^1_0(\Omega)^3)$, taking a subsequence if necessary, we find $\bar{v}\in  L^2([0,T];H^1_0(\Omega)^3)$ to which $\bar{v}_\delta$ weakly converges in $L^2([0,T];H^1_0(\Omega)^3)$ as $\delta\to0$, i.e., 
$$(\bar{v}_\delta,\psi)_{L^2([0,T];H^1(\Omega)^3)}\to(\bar{v},\psi)_{L^2([0,T];H^1(\Omega)^3)} \mbox{ as $\delta\to0$, \,\,\, $\forall\,\psi\in L^2([0,T];H^1_0(\Omega)^3)$.}$$  
Therefore, we have 
$(v-\bar{v},\psi)_{L^2([0,T];H^1(\Omega)^3)}=0\mbox{ \,\,\,for any $\psi\in L^2([0,T];H^1_0(\Omega)^3)$.}$ 
Since $\bar{v}_\delta-\bar{v}\in  L^2([0,T];H^1_0(\Omega)^3)$, we obtain 
\begin{eqnarray*}
0&=&(v-\bar{v},\bar{v}_\delta-\bar{v})_{L^2([0,T];H^1(\Omega)^3)}\\
&=&(v-\bar{v},v-\bar{v})_{L^2([0,T];H^1(\Omega)^3)}+(v-\bar{v},\bar{v}_\delta-v)_{L^2([0,T];H^1(\Omega)^3)}\\
&\to&\norm v-\bar{v}\norm_{L^2([0,T];H^1(\Omega)^3)}^2\mbox{\quad as $\delta\to0$},
\end{eqnarray*}
which means that $v=\bar{v}\in  L^2([0,T];H^1_0(\Omega)^3)$. 

Thus, we conclude that $v\in L^2([0,T];\tilde{H}^1_{0,\sigma}(\Omega))=L^2([0,T];H^1_{0,\sigma}(\Omega))$.
\end{proof}
We show $t$-pointwise weak convergence of $\{\rho_\delta\}$, which is required in the next subsection. 
\begin{Prop}\label{pointwise}
There exists a sequence $\delta\to0$ and $\rho\in L^2([0,T];L^2(\tOmega))$ such that for every $t\in[0,T]$,
\begin{eqnarray*}
&&\rho^0_\ast \le\rho(t,\cdot)\le   \rho^0_{\ast\ast};\quad 
 \rho_\delta(t,\cdot)\wto\rho(t,\cdot)\mbox{ in $L^2(\tOmega)$ as $\delta\to0$}.
\end{eqnarray*} 
\end{Prop}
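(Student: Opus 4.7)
The plan is to obtain pointwise-in-$t$ weak compactness from (a) the uniform $L^\infty$-bound $\rho^0_\ast\le \eta^{n+1}\le \rho^0_{\ast\ast}$ of Proposition \ref{3estimate} and (b) an equi-continuity estimate in $t$ distilled from the discrete transport equation \eqref{d1}; an Arzel\`a--Ascoli/diagonal argument then finishes the proof.

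The core ingredient is the equi-continuity estimate. Fix $\phi\in C^\infty_0(\tOmega)$ and take $h$ small enough that $\operatorname{supp}\phi\subset\tOmegain$ and $\eta^n\equiv\rho^0_\ast$ on $\partial\tOmega_h$. Multiplying \eqref{d1} by $\phi(x)h^3$, summing over $x\in\tOmegain$, using summation by parts on the divergence term, and exploiting the symmetry of $B$ via $\tfrac{1}{7}\sum_{\omega\in B}\phi(x-h\omega)=\phi(x)+O(h^2)\|\phi\|_{C^2}$, I get
$$(\eta^{n+1},\phi)_{\tOmega_h}-(\eta^n,\phi)_{\tOmega_h}=\tau(\eta^n\tilde{u}^n,D\phi)_{\tOmega_h}+O(\tau h^2)\|\phi\|_{C^2}.$$
Since $\|\eta^n\|_{\infty,\tOmega_h}\le\rho^0_{\ast\ast}$ and $\|\tilde{u}^n\|_{2,h\Z^3}\le\|u^n\|_{2,\Omega_h}\le C$ by \eqref{molli} and Corollary \ref{3es}, the right-hand side is $O(\tau)C(\phi)$ uniformly in $\delta$, so telescoping yields
$$|(\rho_\delta(t,\cdot),\phi)_{L^2(\tOmega)}-(\rho_\delta(s,\cdot),\phi)_{L^2(\tOmega)}|\le C(\phi)(|t-s|+\tau),\quad s,t\in[0,T].$$

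Now fix a countable dense subset $\{\phi_k\}\subset C^\infty_0(\tOmega)$ of $L^2(\tOmega)$. For each $k$, the family $t\mapsto(\rho_\delta(t,\cdot),\phi_k)_{L^2(\tOmega)}$ is uniformly bounded (by $\rho^0_{\ast\ast}\|\phi_k\|_{L^1(\tOmega)}$) and asymptotically equi-continuous on $[0,T]$. Arzel\`a--Ascoli plus a diagonal extraction, performed as a refinement of the subsequence of Proposition \ref{weak-convergence}, produces a single sequence $\delta\to0$ along which $(\rho_\delta(\cdot),\phi_k)_{L^2(\tOmega)}\to\ell_k(\cdot)$ uniformly on $[0,T]$, for every $k$. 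For each $t\in[0,T]$, the linear functional $\phi_k\mapsto\ell_k(t)$ is bounded by $\rho^0_{\ast\ast}\|\phi_k\|_{L^1(\tOmega)}$ and hence extends by density to a bounded linear functional on $L^2(\tOmega)$, represented by some $\rho(t,\cdot)\in L^2(\tOmega)$. A standard $\varepsilon/3$-argument combining the uniform bound $\|\rho_\delta(t,\cdot)\|_{L^2(\tOmega)}\le\rho^0_{\ast\ast}\sqrt{\operatorname{vol}(\tOmega)}$ with density of $\{\phi_k\}$ upgrades this to full weak convergence $\rho_\delta(t,\cdot)\wto\rho(t,\cdot)$ in $L^2(\tOmega)$ for every $t$. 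Testing against arbitrary non-negative $\phi\in L^2(\tOmega)$ and passing to the weak limit in $\rho^0_\ast\int\phi\le(\rho_\delta(t,\cdot),\phi)_{L^2(\tOmega)}\le\rho^0_{\ast\ast}\int\phi$ gives the pointwise bounds $\rho^0_\ast\le\rho(t,\cdot)\le\rho^0_{\ast\ast}$ a.e.\ in $\tOmega$, and uniqueness of distributional limits identifies this $t$-pointwise limit with the $L^2([0,T];L^2(\tOmega))$-limit of Proposition \ref{weak-convergence}.

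The main obstacle is the equi-continuity step: one must check that the boundary contributions in the summation by parts truly vanish (which uses both that $\operatorname{supp}\phi$ lies strictly inside $\tOmega$ for small $h$ and that $\eta^n\equiv\rho^0_\ast$ on $\partial\tOmega_h$), and that only the $L^2$-norm of $\tilde{u}^n$ — precisely what the contractivity \eqref{molli} of the averaging $\mathcal{A}_h^{k_n}$ gives, regardless of the possibly large truncation index $k_n$ — is needed to control $(\eta^n\tilde{u}^n,D\phi)_{\tOmega_h}$. Once this is in place, the rest is a routine Arzel\`a--Ascoli and density argument.
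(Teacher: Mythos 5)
Your proposal is correct and follows essentially the same route as the paper: the identical equi-continuity estimate obtained by testing \eqref{d1} against a fixed smooth $\phi$ supported away from $\p\tOmega$ (summation by parts, $\rho^0_\ast\le\eta^n\le\rho^0_{\ast\ast}$, and $\norm\tilde{u}^n\norm_{2}\le\norm u^n\norm_{2}\le C$ via \eqref{molli} and Corollary \ref{3es}), followed by an Arzel\`a--Ascoli/diagonal extraction and a density argument — the paper diagonalizes over rational times while you diagonalize over a countable dense family of test functions, which is only a cosmetic difference. One bookkeeping correction: the per-step error coming from $\frac{1}{7}\sum_{\omega\in B}\phi(x-h\omega)=\phi(x)+O(h^2)$ is $O(h^2)$, not $O(\tau h^2)$, but since $h^2=h^{\alpha}\tau\le\tau$ under the scaling \eqref{scale}, your telescoped estimate $C(\phi)(|t-s|+\tau)$ and the rest of the argument are unaffected.
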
   
\begin{proof}
We use an Ascoli-Arzela type reasoning. Set $\{s_k\}_{k\in\N}:=\Q\cap[0,T]$. Since $\{\rho_\delta(s_1,\cdot)\}$ is bounded in $L^2(\tOmega)$, there exists a subsequence   $\{\rho_{1l}\}_{l\in\N}\subset \{\rho_\delta\}$ and $\rho(\cdot;s_1)\in L^2(\tOmega)$ such that $\rho_{1l}(s_1,\cdot)\wto \rho(\cdot;s_1)$ in $L^2(\tOmega)$ as $l\to\infty$.   We check that $\rho^0_\ast\le\rho(\cdot;s_1)\le \rho^0_{\ast\ast}$: set $\tilde{\rho}(x):=\min\{ \rho(x;s_1)-\rho^0_{\ast},0 \}:  \tOmega\to\R_{\le0}$; since $\rho_{1l}(s_1,\cdot)-\rho^0_{\ast}\ge0$, we have $(\rho_{1l}(s_1,\cdot)-\rho^0_{\ast}, \tilde{\rho})_{L^2(\tOmega)}\le0$ for all $l$ and  $(\rho_{1l}(s_1,\cdot)-\rho^0_{\ast}, \tilde{\rho})_{L^2(\tOmega)}=(\rho_{1l}(s_1,\cdot), \tilde{\rho})_{L^2(\tOmega)}-(\rho^0_{\ast}, \tilde{\rho})_{L^2(\tOmega)}\to (\rho(\cdot;s_1)-\rho^0_{\ast}, \tilde{\rho})_{L^2(\tOmega)}=\norm\tilde{\rho}\norm_{L^2(\tOmega)}^2$ as $l\to\infty$; hence $\norm\tilde{\rho}\norm_{L^2(\tOmega)}^2\le0$ and $\tilde{\rho}=0$, i.e., $\rho_{1l}(s_1,\cdot)\ge\rho^0_{\ast}$; 
similarly, set $\tilde{\rho}(x):=\min\{ \rho^0_{\ast\ast}-\rho(x;s_1),0 \}:  \tOmega\to\R_{\le0}$; since $\rho^0_{\ast\ast}-\rho_{1l}(s_1,\cdot)\ge0$, we have $(\rho^0_{\ast\ast}-\rho_{1l}(s_1,\cdot), \tilde{\rho})_{L^2(\tOmega)}\le0$ for all $l$ and  $(\rho^0_{\ast\ast}-\rho_{1l}(s_1,\cdot), \tilde{\rho})_{L^2(\tOmega)}=(\rho^0_{\ast\ast}, \tilde{\rho})_{L^2(\tOmega)}-(\rho_{1l}(s_1,\cdot), \tilde{\rho})_{L^2(\tOmega)}\to (\rho^0_{\ast\ast}-\rho(\cdot;s_1), \tilde{\rho})_{L^2(\tOmega)}=\norm\tilde{\rho}\norm_{L^2(\tOmega)}^2$ as $l\to\infty$; hence $\norm\tilde{\rho}\norm_{L^2(\tOmega)}^2\le0$ and $\tilde{\rho}=0$, i.e., $\rho(s_1,\cdot)\le\rho^0_{\ast\ast}$. 

Since $\{\rho_{1l}(s_2,\cdot)\}_{l\in\N}$ is bounded in $L^2(\tOmega)$, there exists a subsequence   $\{\rho_{2l}\}_{l\in\N}\subset \{\rho_{1l}\}_{l\in\N}$ and $\rho(\cdot;s_2)\in L^2(\tOmega)$ such that $\rho_{2l}(s_2,\cdot)\wto \rho(\cdot;s_2)$ in $L^2(\tOmega)$ as $l\to\infty$, where $\rho^0_\ast\le\rho(\cdot;s_2)\le \rho^0_{\ast\ast}$. Repeating this process, we obtain a subsequence   $\{\rho_{k+1l}\}_{l\in\N}\subset \{\rho_{kl}\}_{l\in\N}$ and $\rho(\cdot;s_{k+1})\in L^2(\tOmega)$ such that $\rho_{k+1l}(s_{k+1},\cdot)\wto \rho(\cdot;s_{k+1})$ in $L^2(\tOmega)$ as $l\to\infty$, where $\rho^0_\ast\le\rho(\cdot;s_{k+1})\le \rho^0_{\ast\ast}$, for each $k\in\N$.  It is clear that $\{\rho_k\}_{k\in\N}$, $\rho_{k}:=\rho_{kk}$ satisfies 
$$\rho_k(s_{k'},\cdot)\wto \rho(\cdot;s_{k'})\mbox{\quad in $L^2(\tOmega)$ as $k\to\infty$, $\forall\,k'\in\N$}.$$

In order to see weak convergence of $\{\rho_k(t,\cdot)\}_{k\in\N}$ for all $t\in[0,T]$, we check ``equi-continuity'' of $\{(\rho_k(t,\cdot),\phi)_{L^2(\tOmega)}\}_{k\in\N}$ with respect to $t\in[0,T]$ for each fixed $\phi \in C^1_0(\tOmega)$. 
Let  $h_k,\tau_k,\eta^n_k$ etc., denote the quantities that  generate the step function $\rho_k$. 
 There exists $K(\phi)\in \N$ such that $\phi\equiv0$ on $C_{4h_k}(y)$ for all $y\in \p\tOmega_{h_k}$ and  for all $k\ge K(\phi)$. 
 If $k\ge K(\phi)$, the solution $\eta^n_k$ satisfies
\begin{eqnarray*}
&&(\eta_k^{n+1},\phi)_{\tOmega_{h_k}}
=\sum_{x\in\tOmega_{h_k}\setminus\p\tOmega_{h_k}}\Big( \frac{1}{7}\sum_{\omega\in B}\eta_k^n(x+h_k\omega)\phi(x)-D\cdot(\tilde{u}_k^n\eta_k^n)(x)\phi(x)\tau_k\Big)h_k^3\\
&&\quad =(\eta_k^{n},\phi)_{\tOmega_{h_k}}+
\sum_{x\in\tOmega_{h_k}\setminus\p\tOmega_{h_k}}\tilde{u}_k^n(x)\eta_k^n(x)\cdot D\phi(x)h_k^3\tau_k+O(h_k),
\end{eqnarray*}
where $|O(h_k)|\le M_0 h_k$ with a constant $M_0\ge0$  independent from $k$, $n$. Hence, we have  
\begin{eqnarray*}
&&|(\eta_k^{n+1},\phi)_{\tOmega_{h_k}}-(\eta_k^{n},\phi)_{ \tOmega_{h_k}}|\le\rho^0_{\ast\ast}\norm D\phi(x) \norm_{2,\tOmega_{h_k}}\norm \tilde{u}_k^n\norm_{2,\tOmega_{h_k}}\tau_k+M_0h_k\\
&&\quad \le \rho^0_{\ast\ast}\norm D\phi(x) \norm_{2,\tOmega_{h_k}}\norm u_k^n\norm_{2,\tOmega_{h_k}}\tau_k+M_0h_k.
\end{eqnarray*} 
For any  $0\le t<\tilde{t}\le T$, set $n_k,\tilde{n}_k\in\N\cup\{0\}$ so that $0\le n_k\le\tilde{n}_k\le T_{\tau_k}-1$, $t\in(\tau_k n_k,\tau_k n_k+\tau_k]$ and $\tilde{t}\in(\tau_k \tilde{n}_k, \tau_k \tilde{n}_k+\tau_k]$ if $t>0$ and $n_k=0$ if $t=0$. It follows from \eqref{3es1} that there exists a constant $M_1\ge0$ independent from $k$, $t$, $\tilde{t}$ such that  if $n_k<\tilde{n}_k$, 
\begin{eqnarray*}
&&|(\rho_k(\tilde{t},\cdot)-\rho_k(t,\cdot),\phi)_{L^2(\tOmega)}|
= |(\eta_k^{\tilde{n}_k+1},\phi)_{\tOmega_{h_k}}-(\eta_k^{n_k+1},\phi)_{\tOmega_{h_k}}|+O(h_k)\\
&&\quad\le \rho^0_{\ast\ast}\norm D\phi(x) \norm_{2,\tOmega_{h_k}} \sum_{n=n_k+1}^{\tilde{n}_k}\norm u_k^{n}\norm_{2,\tOmega_{h_k}}\tau_k +M_0h_k (\tilde{n}_k-n_k)\tau_k   +O(h_k)\\
&&\quad \le M_1(|\tilde{t}-t|+2\tau_k)+M_1h_k, 
\end{eqnarray*}
which includes the case of $n_k=\tilde{n}_k$ because $|(\rho_k(\tilde{t},\cdot)-\rho_k(t,\cdot),\phi)_{L^2(\tOmega)}|=0$ if  $n_k=\tilde{n}_k$. 
\indent  Fix an arbitrary small $\ep>0$. There exists $\delta(\ep)>0$ and $K(\ep)\in\N$ such that 
$$k,k'\ge K(\ep)\Longrightarrow  M_1(\delta(\ep)+2\tau_k)+M_1h_k+M_1(\delta(\ep)+2\tau_{k'})+M_1h_{k'}<\frac{2\ep}{3}.$$
Let $I_0:=[0,\delta(\ep)],I_1:=[\delta(\ep),2\delta(\ep)],\ldots,I_{J(\ep)}:=[{J(\ep)}\delta(\ep),T]$. Take a rational number $\tilde{s}_j$ from each $I_j$, $0\le j\le J(\ep)$ ($0\le j\le J(\ep)-1$ if $J(\ep)\delta(\ep)=T$). For any $t\in [0,T]$, there exists $I_j$ such that $t\in I_j$. 
Since $\{(\rho_k(\tilde{s}_j,\cdot),\phi)_{L^2(\tOmega)}\}_{k\in\N}$ is s convergent sequence of $\R$, there exists $K_j(\ep)\ge K(\phi)$ such that if $k,k'\ge K_j(\ep)$ we have 
 $$|(\rho_{k'}(\tilde{s}_j,\cdot),\phi)_{L^2(\tOmega)}-(\rho_k(\tilde{s}_j,\cdot),\phi)_{L^2(\tOmega)}|< \frac{\ep}{3}.$$
 Set $\tilde{K}(\ep)\in\N$ as 
 $$\tilde{K}(\ep):=\max\{K(\ep),K_0(\ep),K_1(\ep)\ldots,K_{J(\ep)}(\ep)\}.$$
Then, we  have for any $k,k'\ge \tilde{K}(\ep)$, 
\begin{eqnarray*}
&&|(\rho_{k'}(t,\cdot),\phi)_{L^2(\tOmega)}-(\rho_k(t,\cdot),\phi)_{L^2(\tOmega)}|
\le |(\rho_{k'}(t,\cdot),\phi)_{L^2(\tOmega)}-(\rho_{k'}(\tilde{s}_j,\cdot),\phi)_{L^2(\tOmega)}|\\
&&\quad +|(\rho_{k'}(\tilde{s}_j,\cdot),\phi)_{L^2(\tOmega)}-(\rho_k(\tilde{s}_j,\cdot),\phi)_{L^2(\tOmega)}|
+ |(\rho_{k}(\tilde{s}_j,\cdot),\phi)_{L^2(\tOmega)}-(\rho_k(t,\cdot),\phi)_{L^2(\tOmega)}|\\
&&\quad< M(\delta(\ep)+2\tau_k)+Mh_k+M(\delta(\ep)+2\tau_{k'})+Mh_{k'}+\frac{\ep}{3}
<\ep.
\end{eqnarray*} 
Therefore, $\{(\rho_k(t,\cdot),\phi)_{L^2(\tOmega)}\}_{k\in\N}$ is a convergent sequence of $\R$. 
On the other hand, since $\{\rho_k(t,\cdot)\}_{k\in\N}$ is bounded in $L^2(\tOmega)$, we have a subsequence $\{\tilde{\rho}_k(t,\cdot)\}_{k\in\N}\subset \{\rho_k(t,\cdot)\}_{k\in\N}$ and $\rho(\cdot;t)\in L^2(\tOmega)$ such that  $\rho^0_\ast\le\rho(\cdot;t)\le \rho^0_{\ast\ast}$ and 
$$\tilde{\rho}_k(t,\cdot)\wto \rho(\cdot;t)\mbox{\quad in $L^2(\tOmega)$ as $k\to\infty$},$$
which implies that 
$$\lim_{k\to\infty} (\rho_k(t,\cdot),\phi)_{L^2(\tOmega)}=\lim_{k\to\infty} (\tilde{\rho}_k(t,\cdot),\phi)_{L^2(\tOmega)} =(\rho(\cdot;t),\phi)_{L^2(\tOmega)},\quad\forall\,\phi\in C^1_0(\tOmega).$$
Since $C^1_0(\tOmega)$ is dense in $L^2(\tOmega)$,  we conclude that   $\rho_k(t,\cdot)\wto \rho(\cdot;t)$ in $L^2(\tOmega)$ as $k\to\infty$ for every $t\in [0,T]$. 
\end{proof}
In the rest of paper, $\{\rho_\delta\}$, $\{v_\delta\}$, $\{\rho_\delta v_\delta\}$ are  the sequences that satisfy the weak convergence shown in Proposition \ref{weak-convergence} and Proposition \ref{pointwise}. 
\subsection{Strong convergence of $\{v_\delta\}$}

Our aim is to prove that the pair of $\rho$ and $v$ found in Proposition \ref{weak-convergence} is a weak solution of \eqref{NS}. For this purpose, we prove  $L^2$-strong convergence of $\{v_\delta\}$ to $v$ through the following steps taken in \cite{Kuroki-Soga}, which  can be seen as a version of well-known Aubin-Lions-Simon approach: 
\begin{itemize}
\item[(S1)] Suppose that the weakly convergent sequence $\{ v_\delta \}$ obtained in Proposition \ref{weak-convergence}, which is re-denoted by $\{v_m\}_{m\in\N}$ ($\{ \rho_\delta \}$ is also re-denoted by $\{ \rho_m \}_{m\in\N}$),  is not strongly convergent in $L^2([0,T];L^2(\Omega)^3)$, i.e.,  $\{ v_m \}$ is not a Cauchy sequence in $L^2([0,T];L^2(\Omega)^3)$.
\item[(S2)] Then, there exists $\ep_0>0$ such that for each $m\in\N$ we have $k(m),l(m)\ge m$ for which 
$0<\ep_0\le \norm v_{k(m)}-v_{l(m)} \norm_{L^2([0,T];L^2(\Omega)^3)}$
 holds.
\item[(S3)] We will see that $\norm v_{k(m)}-v_{l(m)}  \norm_{L^2([0,T];L^2(\Omega)^3)}$ is bounded from the above by two different ``norms''.
\item[(S4)] We are able to estimate  the  ``norms'' to tend to $0$ as $m\to\infty$, only with the information on  the discrete time-derivative of $\rho_mv_m$ and weak convergence of $\{\rho_mv_m\}$, and we reach a contradiction. 
\end{itemize} 
As we will see later, once $L^2$-strong convergence of $\{v_\delta\}$ is proven, 
 we also obtain $L^2$-strong convergence of $\{\rho_\delta\}$ to $\rho$.    

The Aubin-Lions lemma (see, e.g., Lemma 2.1 in Section 2 of Chapter III, \cite{Temam-book}) is standard in this kind of arguments. Kuroki-Soga \cite{Kuroki-Soga} modified Aubin-Lions lemma in the convergence proof for Chorin's projection method applied to the homogeneous incompressible Navier-Stokes equations  so that reasoning similar to Aubin-Lions-Simon approach works under the discrete divergence-free constraint that depends on $\delta$. Here, we further modify  Kuroki-Soga's approach  (our current discrete problem provides the solution $u^{n}$ that is (discrete) $L^2_tH^1_x$-bounded and divergence-free, while Chorin's projection method provide the solution that is (discrete) $L^2_tH^1_x$-bounded but only ``asymptotically'' divergence-free). 

In the case of constant density problems, the modified  Aubin-Lions-Siom approach applied to the sequence $\{v_\delta\}$ refers to  the discrete time-derivative of $v_\delta$, which is treated with the discrete Navier-Stokes equations. However, in the case of non-constant density problems, the controllable quantity is the discrete time-derivative of $\rho_\delta v_\delta$. Because of this, we must  further modify  the Aubin-Lions type interpolation inequality  so that the discrete time-derivative of $\rho_\delta v_\delta$ can be involved in the weak norm.

We provide the ``norms'' mentioned in (S3) and state the interpolation inequality.  Let $\tau_{k(m)},h_{k(m)}, \eta^{n+1}_{k(m)}, u^{n+1}_{k(m)}$ (resp. $\tau_{l(m)},h_{l(m)}, \eta^{n+1}_{l(m)}, u^{n+1}_{l(m)}$), etc.,  be the quantities that provide the step functions $\rho_{k(m)}$, $v_{k(m)}$ (resp. $\rho_{l(m)}$, $v_{l(m)}$). 
For each $t\in[0,T]$, take $n_{k(m)},n_{l(m)}\in\N$ such that $t\in(\tau_{k(m)}n_{k(m)},\tau_{k(m)}n_{k(m)}+\tau_{k(m)}]$, $t\in(\tau_{l(m)}n_{l(m)},\tau_{l(m)}n_{l(m)}+\tau_{l(m)}]$ if $t>0$  and $n_{k(m)}=n_{l(m)}=0$ if $t=0$; define  
\begin{eqnarray*}
&& \nnorm v_{k(m)} (t,\cdot)\nnorm:= \Big(\norm u_{k(m)}^{n_{k(m)}+1}\norm_{2,\Omega_{h_{k(m)}}}^2+\sum_{j=1}^3\norm D^+_j u_{k(m)}^{n_{k(m)}+1}\norm_{2,\Omega_{h_{k(m)}}}^2\Big)^\2,\\
&& \nnorm v_{l(m)} (t,\cdot)\nnorm:= \Big(\norm u_{l(m)}^{n_{l(m)}+1}\norm_{2,\Omega_{h_{l(m)}}}^2+\sum_{j=1}^3\norm D^+_j u_{l(m)}^{n_{l(m)}+1}\norm_{2,\Omega_{h_{l(m)}}}^2\Big)^\2,\\
&&\nnorm \rho_{k(m)}(t,\cdot)v_{k(m)}(t,\cdot)-\rho_{l(m)}(t,\cdot)v_{l(m)}(t,\cdot)\nnorm_{\rm op} \\
&&\qquad \qquad\quad    := \sup_\phi
 \Big|(\eta^{n_{k(m)}+1}_{k(m)}u_{k(m)}^{n_{k(m)}+1}, \phi)_{\Omega_{h_{k(m)}}}-(\eta^{n_{l(m)}+1}_{l(m)}u_{l(m)}^{n_{l(m)}+1},  \phi)_{\Omega_{h_{l(m)}}}\Big|,
\end{eqnarray*}
where {\it the supremum is taken over all $\phi\in  C^3_{0,\sigma}(\Omega)$ such that $\norm \phi \norm_{W^{3,\infty}(\Omega)^3} =1$ and {\rm supp}$(\phi)
\cap \Omega_{h_{k(m)}} \subset \Omega_{h_{k(m)}}^{\circ}$, {\rm supp}$(\phi)
\cap \Omega_{h_{l(m)}} \subset \Omega_{h_{l(m)}}^{\circ}$ }; $(\cdot,\phi)_{\Omega_h}$ means $(\cdot,\phi|_{\Omega_h})_{\Omega_h}$. 
%
\begin{Lemma}\label{key-lemma}
For each $\nu>0$, there exists $A_\nu>0$  independent of  $t\in[0,T]$ such that 
\begin{eqnarray}\label{key}
&&\,\,\, \norm v_{k(m)}(t,\cdot)-v_{l(m)}(t,\cdot)\norm_{L^2(\Omega)^3}\le \nu (\nnorm v_{k(m)}(t,\cdot)\nnorm+\nnorm v_{l(m)}(t,\cdot)\nnorm+m^{-1})\\\nonumber
&&\quad+A_\nu (\nnorm \rho_{k(m)}(t,\cdot)v_{k(m)}(t,\cdot)-\rho_{l(m)}(t,\cdot)v_{l(m)}(t,\cdot)\nnorm_{\rm op}+m^{-1}), \quad \mbox{$\forall\,m\in\N$, $\forall\,t\in[0,T]$}.
\end{eqnarray} 
\end{Lemma}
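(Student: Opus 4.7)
I treat this as a generalized Aubin--Lions--Simon interpolation in the spirit of \cite{Kuroki-Soga}, the essential novelty being that the weak seminorm on the right-hand side involves the momentum $\rho v$ rather than the velocity $v$. The plan is to approximate $v_{k(m)}-v_{l(m)}$ by its low-frequency projection onto a finite-dimensional smooth divergence-free subspace, absorb the remainder into the $H^1$-terms via Rellich compactness (this produces the $\nu$-coefficient), and then dominate the projection coefficients by the op-norm of $\rho_{k(m)}v_{k(m)}-\rho_{l(m)}v_{l(m)}$ after exploiting $\rho\ge\rho^0_\ast>0$ to back out $v$ from $\rho v$.

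\textbf{Step 1 (basis and tail).} I fix a countable $L^2_\sigma(\Omega)$-orthonormal system $\{\phi_j\}_{j\in\N}\subset C^3_{0,\sigma}(\Omega)$ whose linear span is dense in $H^1_{0,\sigma}(\Omega)$ (for instance smoothly truncated Stokes eigenfunctions). Given $\nu>0$, I choose $N=N(\nu)$ so that the $L^2$-orthogonal projection $P_N$ onto $V_N=\mathrm{span}\{\phi_1,\dots,\phi_N\}$ satisfies $\|(I-P_N)w\|_{L^2(\Omega)^3}\le\nu\|w\|_{H^1(\Omega)^3}$ for every $w\in H^1_{0,\sigma}(\Omega)$; this uses the compact Rellich embedding. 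Using Lemma \ref{inter}, I pass from the step functions $v_{k(m)}(t,\cdot),v_{l(m)}(t,\cdot)$ to Lipschitz $H^1_0$-lifts $\bar v_{k(m)},\bar v_{l(m)}$, at a cost of $O(h)$ in $L^2$ and with $H^1$-norms controlled by $\nnorm v_{k(m)}(t,\cdot)\nnorm$, $\nnorm v_{l(m)}(t,\cdot)\nnorm$. The tail $(I-P_N)(\bar v_{k(m)}-\bar v_{l(m)})$ has $L^2$-norm $\le C\nu(\nnorm v_{k(m)}(t,\cdot)\nnorm+\nnorm v_{l(m)}(t,\cdot)\nnorm)$, while the lifting error is swept into the $m^{-1}$-slack.

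\textbf{Step 2 (head via $\rho$-inversion).} For the head, $\|P_N(\bar v_{k(m)}-\bar v_{l(m)})\|_{L^2}^2=\sum_{j=1}^N a_j^2$ with $a_j=(\bar v_{k(m)}-\bar v_{l(m)},\phi_j)_{L^2}$. Since $\rho_{l(m)}\ge\rho^0_\ast$ pointwise, I use the identity
\begin{equation*}
\rho^0_\ast(v_{k(m)}-v_{l(m)})=\frac{\rho^0_\ast}{\rho_{l(m)}}\Bigl[(\rho_{k(m)}v_{k(m)}-\rho_{l(m)}v_{l(m)})+(\rho_{l(m)}-\rho_{k(m)})v_{k(m)}\Bigr].
\end{equation*}
Testing against $\phi_j$ splits $\rho^0_\ast a_j$ into (i) a ``momentum'' pairing $(\rho_{k(m)}v_{k(m)}-\rho_{l(m)}v_{l(m)},\,\rho^0_\ast\phi_j/\rho_{l(m)})$, and (ii) a ``density-difference'' pairing $((\rho_{l(m)}-\rho_{k(m)})v_{k(m)},\,\rho^0_\ast\phi_j/\rho_{l(m)})$. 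For (i), I approximate the rough multiplier $\rho^0_\ast\phi_j/\rho_{l(m)}$ by a smooth divergence-free surrogate $\psi_{j,m}\in C^3_{0,\sigma}(\Omega)$, obtained by mollification followed by a continuous Leray projection, arranged so that $\|\psi_{j,m}\|_{W^{3,\infty}}\le C_j$ independently of $m$ while $\|\rho^0_\ast\phi_j/\rho_{l(m)}-\psi_{j,m}\|_{L^2}\to 0$ along a subsequence by virtue of the pointwise-in-$t$ weak $L^2(\tilde\Omega)$-convergence of $\rho_\delta$ from Proposition \ref{pointwise}. Term (i) is thereby bounded by $C_j\,\nnorm\rho_{k(m)}v_{k(m)}-\rho_{l(m)}v_{l(m)}\nnorm_{\rm op}+o(1)$. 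Term (ii) is shown to be $o(1)$ as $m\to\infty$ by combining the same pointwise-in-$t$ weak convergence of $\rho_\delta$ with the uniform $L^2$-bound on $\{v_\delta\}$ from Corollary \ref{3es}. Collecting Steps 1--2 and renaming constants yields the claim with $A_\nu$ depending on $N(\nu)$ through $\sqrt{N}\,\max_{j\le N}C_j$.

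\textbf{Main obstacle.} The crux of the argument is the construction of $\psi_{j,m}$: the function $\rho^0_\ast\phi_j/\rho_{l(m)}$ is only $L^\infty$ since $\rho_{l(m)}$ is a piecewise-constant grid function with jumps of size $O(1)$, yet a $C^3_{0,\sigma}$-surrogate with $W^{3,\infty}$-norm bounded uniformly in $m$ is required. Choosing $W^{3,\infty}$ (rather than $W^{1,\infty}$) regularity in the op-norm test-function class is precisely what gives enough room to absorb three mollifier derivatives and a continuous Leray adjustment with $m$-independent constants; the pointwise-in-$t$ weak $L^2$-convergence from Proposition \ref{pointwise} is the input that drives the $L^2$-approximation error to zero and also disposes of the density-difference residue (ii) without needing strong $L^2$-convergence of $\rho_\delta$, which is not yet available at this stage.
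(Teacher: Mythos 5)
Your Step 2 contains the gap, and it is exactly at the point you flag as the ``main obstacle''. The surrogate $\psi_{j,m}$ cannot exist with the two properties you require. The function $\rho^0_\ast\phi_j/\rho_{l(m)}$ oscillates at the grid scale $h_{l(m)}\to0$ with jumps of size $O(1)$, and at this stage of the argument $\rho_{l(m)}(t,\cdot)$ is only known to converge \emph{weakly} in $L^2(\tOmega)$ (Proposition \ref{pointwise}); its strong convergence is proved only in Theorem \ref{strong2}, which itself relies on the strong convergence of $\{v_\delta\}$ and hence on the very lemma you are proving. If a family $\psi_{j,m}$ bounded in $W^{3,\infty}$ (indeed already in $W^{1,\infty}$) satisfied $\norm \rho^0_\ast\phi_j/\rho_{l(m)}-\psi_{j,m}\norm_{L^2}\to0$, then $1/\rho_{l(m)}$ would be strongly precompact in $L^2$ on ${\rm supp}(\phi_j)$, which is precisely the information that is not available; weak pointwise-in-$t$ convergence cannot produce it, and a fixed mollification scale gives uniform $W^{3,\infty}$ bounds but an $L^2$ error that does not vanish. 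The same problem affects your term (ii): $(\rho_{l(m)}-\rho_{k(m)})v_{k(m)}$ is a product of two merely weakly convergent sequences (times a rough multiplier), so it cannot be shown to be $o(1)$ from Proposition \ref{pointwise} and an $L^2$ bound on $v_\delta$ alone. Finally, even granting these steps, your residual errors are only $o(1)$ in $m$ (and not uniform in $t$, which the lemma requires), so they cannot be absorbed into the fixed slack $A_\nu m^{-1}$ nor into the $\nu$-term, whose coefficient $\nnorm v_{k(m)}\nnorm+\nnorm v_{l(m)}\nnorm$ may be small.

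The paper avoids all of this by arguing by contradiction: assuming \eqref{key} fails along some sequence, it normalizes $v_{k(m(i))},v_{l(m(i))}$ by $\nnorm v_{k(m(i))}\nnorm+\nnorm v_{l(m(i))}\nnorm+m(i)^{-1}$, passes to Lipschitz interpolants (Lemma \ref{inter}) bounded in $H^1_0(\Omega)^3$, and extracts via Rellich--Kondrachov a subsequence of normalized velocities converging \emph{strongly} in $L^2$ to some $\bar\omega^1,\bar\omega^2\in H^1_{0,\sigma}(\Omega)$. It is this strong convergence of the velocity factor that lets the weak convergence of $\rho_\delta(t,\cdot)$ be used in the pairings $(\rho_{k(m(i))}\omega^1_i,\phi)$, leading to $(\rho(t,\cdot)\bar\omega,\phi)=0$ for all $\phi\in C^3_{0,\sigma}(\Omega)$ while $\norm\bar\omega\norm_{L^2}\ge\nu_0>0$, contradicting $\rho\ge\rho^0_\ast>0$; a second contradiction argument then gives the $t$-independence of $A_\nu$. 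Your direct projection scheme forgoes any such compactness on the velocities, which is why the momentum pairing cannot be converted into a bound on the velocity coefficients. If you want to keep your finite-dimensional-projection framing, it must be wrapped in a contradiction/normalization argument of this type; as written, the proof does not close.
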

\medskip

\noindent{\bf Remark.} {\it The presence of $m^{-1}$ would play an important role when we possibly have $\nnorm\cdot\nnorm_{\rm op}=0$, where it is not a priori clear $\nnorm\cdot\nnorm_{\rm op}\neq0$ or not. Kuroki-Soga \cite{Kuroki-Soga} dealt with the case where $\rho_{k(m)}\equiv\rho_{l(m)}\equiv1$, but they missed the regularization  by $m^{-1}$.  The presence of $m^{-1}$ does not change anything in regards to our application of Lemma \ref{key-lemma} to a proof of strong convergence. } 

\medskip
\begin{proof}
First we find $A_\nu$ for each fixed $t\in[0,T]$.  
Suppose that the assertion does not hold. 
Then, there exists some constant $\nu_0>0$ such that  for each $i\in\N$ we can find $m=m(i)\in\N$ such that  
\begin{eqnarray}\label{6161}
&&\norm v_{k(m(i))}(t,\cdot)-v_{l(m(i))}(t,\cdot)\norm_{L^2(\Omega)^3}> \nu_0 (\nnorm v_{k(m(i))}(t,\cdot)\nnorm +\nnorm v_{l(m(i))}(t,\cdot)\nnorm+m(i)^{-1}) \\\nonumber
&&\qquad+i (\nnorm  \rho_{k(m(i))}(t,\cdot)v_{k(m(i))}(t,\cdot)-\rho_{l(m(i))}(t,\cdot)v_{l(m(i))}(t,\cdot)\nnorm_{\rm op}+m(i)^{-1}),
\end{eqnarray}
where $m(i)$ cannot stay finite as $i\to\infty$ due to the presence of $m(i)^{-1}$ and we may assume  $m(i)\nearrow\infty$ as $i\to\infty$.   
Normalize $v_{k(m(i))}(t,\cdot),v_{l(m(i))}(t,\cdot)$ as   
\begin{eqnarray*}
&&\omega_{i}^1:=\frac{v_{k(m(i))}(t,\cdot)}{\nnorm v_{k(m(i))}(t,\cdot)\nnorm+\nnorm v_{l(m(i))}(t,\cdot)\nnorm+m(i)^{-1}},\\
&&\omega_{i}^2:=\frac{v_{l(m(i))}(t,\cdot)}{\nnorm v_{k(m(i))}(t,\cdot)\nnorm+\nnorm v_{l(m(i))}(t,\cdot)\nnorm+m(i)^{-1}}, 
\end{eqnarray*}
where $\omega_{i}^1$ and $\omega_{i}^2$ are still step functions defined on $\Omega$. 
Setting $\tilde{\omega}_{i}^1:=\omega_{i}^1|_{\Omega_{h_{k(m(i))}}}$, $\tilde{\omega}_{i}^2:=\omega_{i}^2|_{\Omega_{h_{l(m(i))}}}$ (restriction on the grid),  we see that  
\begin{eqnarray*}
&& \norm \tilde{\omega}^1_{i}\norm_{\Omega_{h_{k(m(i))}}}\le1,\,\,\, \norm \tilde{\omega}^2_{i}\norm_{\Omega_{h_{l(m(i))}}}\le1,\\
&&\norm D_j^+ \tilde{\omega}^1_{i}\norm_{\Omega_{h_{k(m(i))}}}\le1,\,\,\,
\norm D_j^+ \tilde{\omega}^2_{i}\norm_{\Omega_{h_{l(m(i))}}}\le1,\quad j=1,2,3.
\end{eqnarray*}
Let $\bar{\omega}^1_{i},\bar{\omega}^2_{i}:\Omega\to\R^3$ be the Lipschitz interpolation of $\omega^1_{i}$, $\omega^2_{i}$, respectively, by means of Lemma \ref{inter}.  We have 
\begin{eqnarray}\label{616161}
&& \norm \bar{\omega}^1_{i}-\omega^1_{i}\norm_{L^2(\Omega)^3}\le Kh_{k(m(i))},
\,\,\, \norm \bar{\omega}^2_{i}-\omega^2_{i}\norm_{L^2(\Omega)^3}\le Kh_{l(m(i))},\\\nonumber
&&\norm \partial_{x_j} \bar{\omega}^1_{i}\norm_{L^2(\Omega)^3} \le K',\,\,\,
\norm \partial_{x_j} \bar{\omega}^2_{i}\norm_{L^2(\Omega)^3} \le K', \mbox{ \quad  $\forall\,i\in\N$,  $j=1,2,3$},
\end{eqnarray}
where $K,K'$ are some constants. Hence, $\{\bar{\omega}^1_{i}\}_{i\in\N}$, $\{\bar{\omega}^2_{i}\}_{i\in\N}$ are bounded sequences of $H^1_0(\Omega)^3$;  
with reasoning similar to the proof of Proposition \ref{weak-convergence}, we find functions $\bar{\omega}^1,\bar{\omega}^2\in H^1_0(\Omega)^3$ such that $ \bar{\omega}^1_{i}\wto \bar{\omega}^1$, $ \bar{\omega}^2_{i}\wto \bar{\omega}^2$ in $H^1_0(\Omega)^3$ as $i\to\infty$  (up to a subsequence), 
as well as  $\partial_{x_j}\bar{\omega}^1_{i}\wto \partial_{x_j}\bar{\omega}^1$, $\partial_{x_j}\bar{\omega}^2_{i}\wto \partial_{x_j}\bar{\omega}^2$ in $L^2(\Omega)^3$ as $i\to\infty$ (up to a subsequence). 
On the other hand, due to the Rellich-Kondrachov theorem, taking a subsequence if necessary, we see that $ \bar{\omega}^1_{i}\to \bar{\omega}^1$, $ \bar{\omega}^2_{i}\to \bar{\omega}^2$ strongly in $L^2(\Omega)^3$ as $i\to\infty$. By \eqref{616161}, we have 
\begin{eqnarray}\label{6015}
\mbox{$\omega^1_{i}\to \bar{\omega}^1$,\,\, $\omega^2_{i}\to \bar{\omega}^2$ strongly in $L^2(\Omega)^3$ as $i\to\infty$.}
 \end{eqnarray}
Since $\tilde{\omega}^1_{i}$, $\tilde{\omega}^2_{i}$ are  discrete divergence-free,  we have  for each $\phi\in C^\infty_0(\Omega)$ (restricted to the grid) and for sufficiently large $i$, 
\begin{eqnarray*}
&&0=(D\cdot \tilde{\omega}^1_{i},\phi)_{\Omega_{h_{k(m(i))}}}
=-(\tilde{\omega}^1_{i},D\phi)_{\Omega_{h_{k(m(l))}}}
=-(\omega^1_{i},\nabla\phi)_{L^2(\Omega)^3} +O(h_{k(m(i))})\\
&&\quad \to -(\bar{\omega}^1,\nabla\phi)_{L^2(\Omega)^3} =(\nabla\cdot \bar{\omega}^1,\phi)_{L^2(\Omega)^3}=0
\mbox{ as $i\to\infty$}\mbox{ \quad (the same to  $\tilde{\omega}^2_{i}$)}
\end{eqnarray*}
to conclude that $\bar{\omega}^1,\bar{\omega}^2,\bar{\omega}:=\bar{\omega}^1-\bar{\omega}^2\in \tilde{H}^1_{0,\sigma}(\Omega)=H^1_{0,\sigma}(\Omega)$.

It follows from \eqref{6161} that 
\begin{eqnarray}\label{key1}
&&2\ge\norm \omega^1_{i}- \omega^2_{i} \norm_{L^2(\Omega)^3}> \nu_0 +i \nnorm  \rho_{k(m(i))}(t,\cdot) \omega^1_{i}-\rho_{l(m(i))}(t,\cdot) \omega^2_{i}\nnorm_{\rm op}\\\nonumber 
&&\qquad +i\frac{m(i)^{-1}}{\nnorm v_{k(m(i))}(t,\cdot)\nnorm+\nnorm v_{l(m(i))}(t,\cdot)\nnorm+m(i)^{-1}}
\ge\nu_0>0,\mbox{ $\forall\,i\in\N$},
\end{eqnarray}
which implies that 
\begin{eqnarray}\label{key2}
&& \nnorm  \rho_{k(m(i))}(t,\cdot) \omega^1_{i}-\rho_{l(m(i))}(t,\cdot) \omega^2_{i}\nnorm_{\rm op}\to 0\mbox{\quad as $i\to\infty$.}
\end{eqnarray} 
 For each $\phi\in C^3_{0,\sigma}(\Omega)$ with  $\norm \phi \norm_{W^{3,\infty}(\Omega)^3} =1$ and for all sufficiently large $i$, we obtain with Proposition \ref{pointwise} and \eqref{6015}, 
\begin{eqnarray*}
&&\nnorm  \rho_{k(m(i))}(t,\cdot) \omega^1_{i}-\rho_{l(m(i))}(t,\cdot) \omega^2_{i}\nnorm_{\rm op}\\
&&\quad \ge \Big|( \rho_{k(m(i))}(t,\cdot) \omega^1_{i},  \phi)_{\Omega_{h_{k(m(l))}}}
-(\rho_{l(m(i))}(t,\cdot) \omega^2_{i},  \phi)_{\Omega_{h_{l(m(l))}}}\Big|\\
&&\quad = \Big|( \rho_{k(m(i))}(t,\cdot),  \bar{\omega}^1  \phi)_{\Omega_{h_{k(m(l))}}}
+( \rho_{k(m(i))}(t,\cdot), (\omega^1_{i}- \bar{\omega}^1)  \phi)_{\Omega_{h_{k(m(l))}}}\\
&&\qquad -(\rho_{l(m(i))}(t,\cdot) , \bar{\omega}^2 \phi)_{\Omega_{h_{l(m(l))}}}
-(\rho_{l(m(i))}(t,\cdot) (\omega^2_{i}-\bar{\omega}^2)  \phi)_{\Omega_{h_{l(m(l))}}}
\Big|\\
&&\qquad \to\Big|    (\rho(t,\cdot),\bar{\omega}^1\phi)_{L^2(\Omega)^3}-  (\rho(t,\cdot),\bar{\omega}^2\phi)_{L^2(\Omega)^3} \Big|\mbox{ as $i\to\infty$}.
\end{eqnarray*}
Hence, with \eqref{6015},  (\ref{key1}) and (\ref{key2}),  we obtain      
$$ 0<\nu_0\le\norm \bar{\omega}\norm_{L^2(\Omega)^3},\quad(\rho(t,\cdot)\bar{\omega},\phi)_{L^2(\Omega)^3}=0,\mbox{ \,\,\,$\forall\,\phi\in  C^3_{0,\sigma}(\Omega)$.}$$
The first inequality implies $\bar{\omega}\neq0$. However,  since $\bar{\omega}\in H^1_{0,\sigma}(\Omega)$, we take $\{\omega_l\}_{l\in\N}\subset C^\infty_{0,\sigma}(\Omega)$ that approximates $\bar{\omega}$ in the $H^1$-norm as $l\to\infty$ and find    
\begin{eqnarray*}
\int_\Omega \rho(t,x)|\bar{\omega}(x)|^2dx&=&(\rho(t,\cdot)\bar{\omega},\bar{\omega})_{L^2(\Omega)^3}=(\rho(t,\cdot)\bar{\omega},\omega_l)_{L^2(\Omega)^3}+(\rho(t,\cdot)\bar{\omega},\bar{\omega}-\omega_l)_{L^2(\Omega)^3}\\
&=&(\rho(t,\cdot)\bar{\omega},\bar{\omega}-\omega_l)_{L^2(\Omega)^3}\to0\mbox{\quad as $l\to\infty$}.
\end{eqnarray*}
Since $0<\rho^0_{\ast}\le\rho(t,\cdot)\le \rho^0_{\ast\ast}$, we have $\bar{\omega}=0$, which is  a contradiction.  Therefore, there exists $A_\nu=A_\nu(t)>0$ for each $t\in[0,T]$ as claimed. 

We prove that  there exists  $A_\nu>0$ independent of the choice of  $t\in[0,T]$.  Fix any $\nu>0$. 
Let $A^\ast_\nu(t)$ be the infimum of $\{A_\nu\,|\, \mbox{\eqref{key} holds} \}$ for each fixed $t$. We will show  that $A^\ast_\nu(\cdot)$ is bounded on $[0,T]$. 
Suppose that  $A^\ast_\nu(\cdot)$ is not bounded. Then, we find a sequence $\{s_i\}_{i\in\N}\subset[0,T]$ for which $A_\nu^\ast(s_i)\nearrow\infty$ as $i\to\infty$.  Set $a_i:=A^\ast_\nu(s_i)/2$. For each $i\in \N$, there exists $m(i)\in\N$ for which we have   
\begin{eqnarray*}
 &&\norm v_{k(m(i))}(s_i,\cdot)-v_{l(m(i))}(s_i,\cdot)\norm_{L^2(\Omega)^3}> \nu (\nnorm v_{k(m(i))}(s_i,\cdot)\nnorm+\nnorm v_{l(m(i))}(s_i,\cdot)\nnorm+m(i)^{-1})\\
&&\qquad +a_i (\nnorm \rho_{k(m(i))}(s_i,\cdot)v_{k(m(i))}(s_i,\cdot)- \rho_{l(m(i))}(s_i,\cdot)v_{l(m(i))}(s_i,\cdot)\nnorm_{\rm op}+m(i)^{-1}).
\end{eqnarray*}
Note that $a_i\nearrow\infty$ as $i\to\infty$ and $\{s_i\}$ converges to some $t^\ast\in[0,T]$ as $i\to\infty$ (up to a subsequence); $m(i)$ cannot stay finite as $i\to\infty$ due to the presence of $m(i)^{-1}$. Since $\{m(i)\}_{i\in\N}$ is unbounded, we may follow the same reasoning as the first half of our proof  and reach a contradiction. In fact, we obtain the limit function $\bar{\omega}=\bar{\omega}^1-\bar{\omega}^2$ such that $0<\nu\le\norm\bar{\omega}\norm_{L^2(\Omega)^3}$ in the same way; we also obtain $(\rho(t^\ast,\cdot)\bar{\omega},\phi)_{L^2(\Omega)^3}=0$ for all $\phi\in  C^3_{0,\sigma}(\Omega)$ by 
 \begin{eqnarray*}
&&\nnorm  \rho_{k(m(i))}(s_i,\cdot) \omega^1_{i}-\rho_{l(m(i))}(s_i,\cdot) \omega^2_{i}\nnorm_{\rm op}\to 0\mbox{ as $i\to\infty$},\\
&&\nnorm  \rho_{k(m(i))}(s_i,\cdot) \omega^1_{i}-\rho_{l(m(i))}(s_i,\cdot) \omega^2_{i}\nnorm_{\rm op}\\
&&\quad \ge \Big|( \rho_{k(m(i))}(s_i,\cdot) \omega^1_{i},  \phi)_{\Omega_{h_{k(m(l))}}}
-(\rho_{l(m(i))}(s_i,\cdot) \omega^2_{i},  \phi)_{\Omega_{h_{l(m(l))}}}\Big|\\
&&\quad = \Big|\Big( \rho_{k(m(i))}(t^\ast,\cdot),  \bar{\omega}^1  \phi\Big)_{\Omega_{h_{k(m(l))}}}
+\Big( \rho_{k(m(i))}(s_i,\cdot)- \rho_{k(m(i))}(t^\ast,\cdot),  \bar{\omega}^1\phi\Big)_{\Omega_{h_{k(m(l))}}}\\
&&\qquad +\Big(\rho_{k(m(i))}(s_i,\cdot), (\omega_i^1-\bar{\omega}^1) \phi\Big)_{\Omega_{h_{k(m(l))}}}
-\Big( \rho_{l(m(i))}(t^\ast,\cdot),  \bar{\omega}^2  \phi\Big)_{\Omega_{h_{l(m(l))}}}\\
&&\qquad -\Big( \rho_{l(m(i))}(s_i,\cdot)- \rho_{l(m(i))}(t^\ast,\cdot),  \bar{\omega}^2\phi\Big)_{\Omega_{h_{l(m(l))}}}
 -\Big(\rho_{l(m(i))}(s_i,\cdot), (\omega_i^2-\bar{\omega}^2) \phi\Big)_{\Omega_{h_{l(m(l))}}}
\Big|\\
&&\qquad \to\Big|    (\rho(t^\ast,\cdot),\bar{\omega}^1\phi)_{L^2(\Omega)^3}-  (\rho(t^\ast,\cdot),\bar{\omega}^2\phi)_{L^2(\Omega)^3} \Big|\mbox{ as $i\to\infty$},
\end{eqnarray*}
where we use the ``equi-continuity'' shown in the proof of Proposition \ref{pointwise} with smooth approximation of $\bar{\omega}^1$ and $\bar{\omega}^2$. 
\end{proof}
\begin{Thm}\label{strong-convergence}
The sequence $\{v_\delta\}$ mentioned in Proposition  \ref{weak-convergence}, which is weakly convergent to the weak limit $v$, converges to $v$ strongly in $L^2([0,T];L^2(\Omega)^3)$.    
\end{Thm}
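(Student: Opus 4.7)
The plan is a straightforward execution of the Aubin--Lions--Simon skeleton (S1)--(S4) stated just before Lemma \ref{key-lemma}. Assuming for contradiction that $\{v_\delta\}$ is not Cauchy in $L^2([0,T];L^2(\Omega)^3)$, extract subsequences $v_{k(m)},v_{l(m)}$ together with the matching density sequences $\rho_{k(m)},\rho_{l(m)}$ and all associated discrete data such that $\norm v_{k(m)}-v_{l(m)}\norm_{L^2([0,T];L^2(\Omega)^3)}\ge\varepsilon_0>0$ for every $m$. Squaring the pointwise-in-$t$ inequality of Lemma \ref{key-lemma}, integrating on $[0,T]$, and using $(x+y)^2\le 2(x^2+y^2)$ yields
\[
\varepsilon_0^2\le 4\nu^2\!\int_0^T\!\!(\nnorm v_{k(m)}\nnorm^2+\nnorm v_{l(m)}\nnorm^2+m^{-2})dt+4A_\nu^2\!\!\int_0^T\!\!(\nnorm \rho_{k(m)}v_{k(m)}-\rho_{l(m)}v_{l(m)}\nnorm_{\rm op}^2+m^{-2})dt.
\]
By Corollary \ref{3es} the first integral is bounded by a constant $C$ independent of $m$; choosing $\nu$ small depending only on $\varepsilon_0,C$ forces the first term below $\varepsilon_0^2/2$, so the proof reduces to driving the second integral $I_m$ to $0$ as $m\to\infty$.

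The heart of the argument is an Aubin--Lions--Simon-type equi-continuity of $\{\rho_\delta v_\delta\}$ in the weak norm $\nnorm\cdot\nnorm_{\rm op}$. Fix an admissible $\phi\in C^3_{0,\sigma}(\Omega)$ with $\norm\phi\norm_{W^{3,\infty}(\Omega)^3}=1$ and $\supp\phi\cap\Omega_h\subset\Omega_h^{\circ}$ for all small $h$. Multiplying the discrete momentum equation \eqref{d3} by $\phi(x)h^3\tau$, summing over $\Omegain$ and over $n$ running from the discrete index of $t$ to that of $t'$, and using summation by parts to shift every discrete difference onto $\phi$, one obtains on the left the telescoping quantity $((\rho_\delta v_\delta)(t',\cdot)-(\rho_\delta v_\delta)(t,\cdot),\phi)_{L^2(\Omega)^3}$. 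On the right, the pressure piece is absorbed through \eqref{qqqq}; the diffusion piece is $O(\norm\phi\norm_{W^{1,\infty}})|t'-t|^{1/2}\bigl(\sum_j\norm D^+_jv_\delta\norm_{L^2_tL^2_x}^2\bigr)^{1/2}$; the convection pieces are controlled by the CFL cap $\norm\tilde u^n\norm_{\infty,h\Z^3}\le\frac{2}{7}h^{-1+\alpha}$, the density bound $\eta^n\le\rho^0_{\ast\ast}$, and the $L^2_tH^1_x$ estimate of Corollary \ref{3es} (after the summation by parts has eliminated every negative power of $h$); and the force piece is dominated by $\norm f\norm_{L^2_tL^2_x}$. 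Assembling everything yields
\[
\bigl|((\rho_\delta v_\delta)(t',\cdot)-(\rho_\delta v_\delta)(t,\cdot),\phi)_{L^2(\Omega)^3}\bigr|\le M\,\norm\phi\norm_{W^{3,\infty}(\Omega)^3}\bigl(|t'-t|^{1/2}+h_\delta^{\alpha}+\tau_\delta\bigr),
\]
with $M$ independent of $\delta,\phi,t,t'$.

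I would then run the Ascoli--Arzel\`a scheme of Proposition \ref{pointwise} on $\{\rho_\delta v_\delta\}$: combined with the weak convergence $\rho_\delta v_\delta\rightharpoonup \tilde v$ of Proposition \ref{weak-convergence} (applied to test functions $\mathbf{1}_{[a,b]}(t)\phi(x)$), the above equi-continuity produces, for every admissible $\phi$ and every $t\in[0,T]$, pointwise-in-$t$ weak convergence $((\rho_\delta v_\delta)(t,\cdot),\phi)\to(\tilde v(t,\cdot),\phi)$. Since $v_{k(m)}$ and $v_{l(m)}$ share the same weak limit $\tilde v$, the pairing of $(\rho_{k(m)}v_{k(m)}-\rho_{l(m)}v_{l(m)})(t,\cdot)$ against each fixed $\phi$ tends to $0$. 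Because the unit ball of $W^{3,\infty}$-functions with a fixed compact support is precompact in $C^2$ by Arzel\`a--Ascoli and the family $\{(\rho_\delta v_\delta)(t,\cdot)\}$ is uniformly $L^2$-bounded, a standard totally-bounded-test-set argument upgrades pointwise convergence on a countable dense subfamily to uniform convergence over the entire unit ball, i.e.\ $\nnorm \rho_{k(m)}v_{k(m)}-\rho_{l(m)}v_{l(m)}\nnorm_{\rm op}(t)\to 0$ pointwise in $t$. Dominated convergence, with integrand uniformly bounded via Corollary \ref{3es}, then gives $I_m\to 0$ and the desired contradiction.

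The main obstacle is the second paragraph: producing the uniform-in-$\phi$ equi-continuity estimate from \eqref{d3}. Two technical subtleties dominate: (i) the locally-averaged velocity $\tilde u^n$ appearing in the convection terms is only $\frac{2}{7}h^{-1+\alpha}$-bounded, so the convection sums must be carefully reorganised by summation by parts so that every discrete difference lands on $\phi$ and no negative power of $h$ survives; (ii) the discrete pressure is only controllable in $h^\alpha$ through \eqref{qqqq}, which is precisely why the admissible test-function class must be $C^3_{0,\sigma}$ with support in $\Omega_h^{\circ}$ rather than the more natural $C^1_{0,\sigma}$. Once these are settled, the overall architecture runs parallel to the template developed in \cite{Kuroki-Soga}, with Lemma \ref{key-lemma} playing the role of the inhomogeneous-density interpolation inequality.
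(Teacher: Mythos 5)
Your proposal is correct and follows essentially the same route as the paper: the contradiction skeleton built on Lemma \ref{key-lemma}, the a priori bounds of Corollary \ref{3es} to absorb the $\nu$-terms, and control of the time increment of $\rho_\delta v_\delta$ paired against admissible solenoidal $\phi$ through the scheme \eqref{d3} together with the pressure estimate \eqref{qqqq}, with the limit identified via the $L^2([0,T];L^2(\Omega)^3)$ weak convergence $\rho_\delta v_\delta\wto\tilde v$. Your equicontinuity-plus-Ascoli--Arzel\`a (and finite-net in $\phi$, dominated convergence in $t$) packaging of the final step is just a reorganization of the paper's decomposition $\eta^{n_{k(m)}+1}u^{n_{k(m)}+1}=a_{k(m)}+b_{k(m)}$ into a time average plus a weighted discrete time derivative, so the two arguments coincide in substance.
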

\begin{proof}
Re-write $\{\rho_\delta\}$, $\{v_\delta\}$  as $\{\rho_m\}_{m\in\N}$, $\{v_m\}_{m\in\N}$.  Suppose that $\{v_m\}$ does not converge to $v$ strongly in $L^2([0,T];L^2(\Omega)^3)$ as $m\to\infty$. Then, $\{v_m\}$ is not a Cauchy sequence in $L^2([0,T];L^2(\Omega)^3)$, i.e.,  there exists $\ep_0>0$ such that  for each $m\in \N$ there exist $k(m),l(m)\ge m$ for which  
 $0<\ep_0\le \norm v_{k(m)}-v_{l(m)}\norm_{L^2([0,T];L^2(\Omega)^3)}$ holds.  It follows from Lemma \ref{key-lemma} that  
\begin{eqnarray*}
0&<&\ep_0\le \norm v_{k(m)}-v_{l(m)}\norm_{L^2([0,T];L^2(\Omega)^3)}\\
&\le& \underline{\nu \Big\{ \Big(\int_0^T\nnorm v_{k(m)}(t,\cdot)\nnorm^2 dt \Big)^\2+ \Big(\int_0^T\nnorm v_{l(m)}(t,\cdot)\nnorm^2 dt \Big)^\2  \Big\}+\nu m^{-1}T^\2+A_\nu m^{-1}T^\2}_{\rm(\ast)} \\
&&+A_\nu\Big(  \int_0^T\nnorm \rho_{k(m)}(t,\cdot)v_{k(m)}(t,\cdot)-\rho_{l(m)}(t,\cdot)v_{l(m)}(t,\cdot)\nnorm_{\rm op}^2dt  \Big)^\2, \mbox{\quad $\forall\,m\in\N$},
\end{eqnarray*} 
where $\nu>0$ is arbitrarily chosen, $A_\nu$ is  a constant and  
\begin{eqnarray*}
&&\int_0^{T}\!\!\!\nnorm v_{k(m)}(t,\cdot)\nnorm^2dt \le \!\!\!\!
\sum_{0\le n< T_{\tau_{k(m)}}} \!\!\!\!
\Big(\norm u_{k(m)}^{n_{k(m)}+1}\norm_{2,\Omega_{h_{k(m)}}}^2+\sum_{j=1}^3\norm D^+_j u_{k(m)}^{n_{k(m)}+1}\norm_{2,\Omega_{h_{k(m)}}}^2 \Big)\tau_{k(m)}, \\
&&\int_0^{T}\!\!\!\nnorm v_{l(m)}(t,\cdot)\nnorm^2dt \le \!\!\!\!
\sum_{0\le n< T_{\tau_{l(m)}}} \!\!\!\!
\Big(\norm u_{l(m)}^{n_{l(m)}+1}\norm_{2,\Omega_{h_{l(m)}}}^2+\sum_{j=1}^3\norm D^+_j u_{l(m)}^{n_{l(m)}+1}\norm_{2,\Omega_{h_{l(m)}}}^2 \Big)\tau_{l(m)}. 
 \end{eqnarray*}
Due to \eqref{3es1} and \eqref{3es2},  for any small $\ep>0$ we may chose $\nu=\nu(\ep)>0$ and $M(\ep)\in\N$ for which   $(\ast)<\ep$ holds for all $m\ge M(\ep)$. 
If we prove $\nnorm \rho_{k(m)}(t,\cdot)v_{k(m)}(t,\cdot)-\rho_{l(m)}(t,\cdot)v_{l(m)}(t,\cdot)\nnorm_{\rm op}\to0$ as $m\to\infty$ for each $t\in(0,T)$, we reach a contradiction and the proof is done. 

The next step starts with a discrete version of the following obvious equality for two functions: 
\begin{eqnarray*}
g(t)\tilde{g}(t)=\frac{1}{\tilde{t}-t}\int_t^{\tilde{t}} g(s)\tilde{g}(s)ds
+\frac{1}{\tilde{t}-t}\int_t^{\tilde{t}} (s-\tilde{t})\frac{d}{ds}\{g(s)\tilde{g}(s))\}ds.
\end{eqnarray*}  
Fix $t\in(0,T)$ arbitrarily. Let $n_{k(m)}\in\N$ be such that $t\in(\tau_{k(m)}n_{k(m)},\tau_{k(m)}n_{k(m)}+\tau_{k(m)}]$. For a fixed $\tilde{t}\in (t,T)$, let $\tilde{n}_{k(m)}$ be such that $\tilde{t}\in(\tau_{k(m)}\tilde{n}_{k(m)},\tau_{k(m)}\tilde{n}_{k(m)}+\tau_{k(m)}]$. 
  Note that $0<\tau_{k(m)}(\tilde{n}_{k(m)}-n_{k(m)})-\tau_{k(m)}\le \tilde{t}-t\le \tau_{k(m)}(\tilde{n}_{k(m)}-n_{k(m)})+\tau_{k(m)}$ for all sufficiently large $m$. We will later appropriately choose $\tilde{t}$ close enough to $t$.  Define   
\begin{eqnarray*}
a_{k(m)}&\!\!\!:=&\!\!\!\frac{1}{\tau_{k(m)}(\tilde{n}_{k(m)}-n_{k(m)})}\sum_{n=n_{k(m)}+1}^{\tilde{n}_{k(m)}} \eta_{k(m)}^{n+1}u_{k(m)}^{n+1} \tau_{k(m)},\\
b_{k(m)}&\!\!\!:=&\!\!\!\frac{1}{\tau_{k(m)}(\tilde{n}_{k(m)}-n_{k(m)})}\!\!\sum_{n=n_{k(m)}+1}^{\tilde{n}_{k(m)}}\!\!\!\!\!\tau_{k(m)}\{(n-1)-\tilde{n}_{k(m)}\} \frac{\eta_{k(m)}^{n+1}u_{k(m)}^{n+1}-\eta_{k(m)}^{n}u^{n}_{k(m)} }{\tau_{k(m)}} \tau_{k(m)}\\
&\!\!\!=&\!\!\!\frac{1}{\tilde{n}_{k(m)}-n_{k(m)}}\sum_{n=n_{k(m)}+1}^{\tilde{n}_{k(m)}}\!\!\!\!\!\!\Big[ (n-\tilde{n}_{k(m)}) \eta_{k(m)}^{n+1}u_{k(m)}^{n+1}- \{(n-1)-\tilde{n}_{k(m)}\}\eta_{k(m)}^{n}u^{n}_{k(m)} \Big]\\
&&\!\!\!\!\!\!-a_{k(m)},  
\end{eqnarray*}
which leads to 
$$\eta_{k(m)}^{n_{k(m)}+1}u_{k(m)}^{n_{k(m)}+1}=a_{k(m)}+b_{k(m)}.$$ 
We introduce $n_{l(m)}$, $\tilde{n}_{l(m)}$, $a_{l(m)}$ and  $b_{l(m)}$ in the same way with the same $t$ and $\tilde{t}$, to have $\eta_{l(m)}^{n_{l(m)}+1}u_{l(m)}^{n_{l(m)}+1}=a_{l(m)}+b_{l(m)}$. 
Observe that 
\begin{eqnarray*}
&&\nnorm\rho_{k(m)}(t,\cdot)v_{k(m)}(t,\cdot)-\rho_{l(m)}(t,\cdot)v_{l(m)}(t,\cdot)\nnorm_{\rm op}\\
&&\qquad = \sup_\phi\Big| (\eta_{k(m)}^{n_{k(m)}+1}u_{k(m)}^{n_{k(m)}+1},\phi)_{\Omega_{h_{k(m)}}} -  (\eta_{l(m)}^{n_{l(m)}+1}u_{l(m)}^{n_{l(m)}+1},\phi)_{\Omega_{h_{l(m)}}}\Big|,\\
&&\Big|  (\eta_{k(m)}^{n_{k(m)}+1}u_{k(m)}^{n_{k(m)}+1},\phi)_{\Omega_{h_{k(m)}}} -  (\eta_{l(m)}^{n_{l(m)}+1}u_{l(m)}^{n_{l(m)}+1},\phi)_{\Omega_{h_{l(m)}}}\Big|\\
&&\qquad \le\Big|(a_{k(m)},\phi)_{\Omega_{h_{k(m)}}}-(a_{l(m)},\phi)_{\Omega_{h_{l(m)}}}\Big|+\Big| (b_{k(m)},\phi)_{\Omega_{h_{k(m)}}}\Big|
+\Big|(b_{l(m)},\phi)_{\Omega_{h_{l(m)}}} \Big|.
\end{eqnarray*}
We check that $\sup_{m,\phi}| (b_{k(m)},\phi)_{\Omega_{h{k(m)}}}|$ can be arbitrarily small as $\tilde{t}\to t+$ within admissible function $\phi$ (noting again that $\phi\equiv0$ near $\partial\Omega_{h_{k(m)}}$ and $\partial\Omega_{h_{l(m)}}$), where we insert the discrete Navier-Stokes equations into the discrete time-derivative. Hereafter, $M_1,M_2,\ldots$ are some constants independent of $t$, $\tilde{t}$, $m$ and admissible functions $\phi$.   
With the discrete Navier-Stokes equations  \eqref{d3}, we have 
\begin{eqnarray*}
&&| (b_{k(m)},\phi)_{\Omega_{h_{k(m)}}}|\le \sum_{n=n_{k(m)}+1}^{\tilde{n}_{k(m)}} \Big|\Big(\frac{\eta_{k(m)}^{n+1} u_{k(m)}^{n+1}-\eta_{k(m)}^{n}u^{n}_{k(m)} }{\tau_{k(m)}},\phi\Big)_{\Omega_{h_{k(m)}}} \Big|\tau_{k(m)}\\
&&\le \underline{\sum_{n=n_{k(m)}+1}^{\tilde{n}_{k(m)}} \Big|\left(\Big(\eta_{k(m)}^{n+1} u_{k(m)}^{n+1}-\frac{1}{7}\sum_{\omega\in B}\eta_{k(m)}^{n}(\cdot+h_{k(m)}\omega)u^{n}_{k(m)}(\cdot+h_{k(m)}\omega)  \Big)\tau_{k(m)}^{-1},\phi\right)_{\Omega_{h_{k(m)}}} \!\!\!\!\!\!\Big|\tau_{k(m)}}_{R_0}\\
&&\quad+\underline{\sum_{n=n_{k(m)}+1}^{\tilde{n}_{k(m)}} \Big|\left(\Big(\frac{1}{7}\sum_{\omega\in B}\eta_{k(m)}^{n}(\cdot+h_{k(m)}\omega)u^{n}_{k(m)}(\cdot+h_{k(m)}\omega)-\eta_{k(m)}^{n} u_{k(m)}^{n}  \Big)\tau_{k(m)}^{-1},\phi\right)_{\Omega_{h_{k(m)}}} \!\!\!\!\!\!\Big|\tau_{k(m)},}_{R_1}\\
&&R_0\le  \underline{\sum_{n=n_{k(m)}+1}^{\tilde{n}_{k(m)}}  
\Big|\Big(D\cdot(\eta_{k(m)}^n\tilde{u}^n_{k(m)})u_{k(m)}^{n+1},\phi \Big)_{\Omega_{h_{k(m)}}}   \Big|\tau_{k(m)}  }_{R_2} \\
&&\qquad + \frac{1}{2} \underline{\sum_{n=n_{k(m)}+1}^{\tilde{n}_{k(m)}}  
\Big|\sum_{j=1}^3  \Big(
\big(
\eta_{k(m)}^n(\cdot-h_{k(m)}e^j)\tilde{u}^n_{k(m)}{}_j(\cdot-he^j)D_ju^{n+1}_{k(m)}(\cdot-h_{k(m)}e^j)}\\
&&\qquad\qquad  \underline{+
\eta_{k(m)}^n(\cdot+h_{k(m)}e^j)\tilde{u}^n_{k(m)}{}_j(\cdot+h_{k(m)}e^j)D_ju_{k(m)}^{n+1}(\cdot+h_{k(m)}e^j)\big),\phi
\Big)_{\Omega_{h_{k(m)}}}   \Big|\tau_{k(m)}  }_{R_3} \\
&&\qquad+  \underline{\sum_{n=n_{k(m)}+1}^{\tilde{n}_{k(m)}}\Big|  \sum_{i=1}^3\Big(D^-\cdot\big\{\mu(\eta^{n+1})\big(D^+u^{n+1}_i+D^+_iu^{n+1}\big)\big\},\phi_i\Big)_{\Omega_{h_{k(m)}}}   \Big|\tau_{k(m)}  }_{R_4}\\
&&\qquad +\underline{\sum_{n=n_{k(m)}+1}^{\tilde{n}_{k(m)}}  \Big|  \Big(\eta_{k(m)}^{n+1}f_{k(m)}^{n+1},\phi\Big)_{\Omega_{h_{k(m)}}}  \Big|\tau_{k(m)}  }_{R_5}
 +\underline{\sum_{n=n_{k(m)}+1}^{\tilde{n}_{k(m)}}  \Big|  \Big(Dq_{k(m)}^{n+1},\phi\Big)_{\Omega_{h_{k(m)}}}  \Big|\tau_{k(m)}  }_{R_6}.
\end{eqnarray*}
We estimate the terms $R_1$-$R_6$. Since 
\begin{eqnarray*}
&&\sum_{\omega\in B}\sum_{x\in\Omega_h}\eta_{k(m)}^{n}(x+h_{k(m)}\omega)u^{n}_{k(m)}(x+h_{k(m)}\omega)\phi(x)\\
&&=\sum_{x\in\Omega_h}\eta_{k(m)}^{n}(x)u^{n}_{k(m)}(x)\phi(x)+\sum_{j=1}^3\sum_{x\in\Omega_h}\{\eta_{k(m)}^{n}(x+h_{k(m)}e^j)u^{n}_{k(m)}(x+h_{k(m)}e^j)\\
&&\quad -\eta_{k(m)}^{n}(x-h_{k(m)}e^j)u^{n}_{k(m)}(x-h_{k(m)}e^j)\} \phi(x)\\
&&=\sum_{x\in\Omega_h}\eta_{k(m)}^{n}(x)u^{n}_{k(m)}(x)\phi(x)+\sum_{j=1}^3\sum_{x\in\Omega_h}\eta_{k(m)}^{n}(x)u^{n}_{k(m)}(x)\{ \phi(x-h_{k(m)}e^j)+\phi(x+h_{k(m)}e^j)\}\\
&&=7\sum_{x\in\Omega_h}\eta_{k(m)}^{n}u^{n}_{k(m)}\phi(x)+ \sum_{j=1}^3\sum_{x\in\Omega_h}\eta_{k(m)}^{n}(x)u^{n}_{k(m)}(x)O(h_{k(m)}^2), 
\end{eqnarray*}
\eqref{scale}, \eqref{e-estimate1} and \eqref{3es1} implies that 
$$R_1\le M_1(\tilde{t}-t)$$
Observe that 
\begin{eqnarray*}
R_2&=&\sum_{n=n_{k(m)}+1}^{\tilde{n}_{k(m)}}  
\Big|\Big(\eta_{k(m)}^n\tilde{u}^n_{k(m)},D(u_{k(m)}^{n+1}\cdot \phi) \Big)_{\Omega_{h_{k(m)}}}   \Big|\tau_{k(m)}\\
&\le& M_2\sum_{n=n_{k(m)}+1}^{\tilde{n}_{k(m)}}\sum_{j=1}^3
 \norm\tilde{u}_{k(m)}^{n}\norm_{2,\Omega_h} \norm D_j^+u_{k(m)}^n\norm_{2,\Omega_h}  \tau_{k(m)}\\
&&+ M_2\sum_{n=n_{k(m)}+1}^{\tilde{n}_{k(m)}}\norm\tilde{u}_{k(m)}^{n}\norm_{2,\Omega_h}\norm u_{k(m)}^n\norm_{2,\Omega_h}  \tau_{k(m)}.
\end{eqnarray*}
By \eqref{molli}, \eqref{3es1} and \eqref{3es2}, we obtain 
\begin{eqnarray*}
R_2&\le& M_3 \sum_{j=1}^3\Big(\sum_{n=n_{k(m)}+1}^{\tilde{n}_{k(m)}}  \norm D_j^+u_{k(m)}^n\norm_{2,\Omega_h}^2 \tau_{k(m)} \Big)^\2\Big(\sum_{n=n_{k(m)}+1}^{\tilde{n}_{k(m)}} 1^2 \tau_{k(m)} \Big)^\2 \\
&&+M_3\Big(\sum_{n=n_{k(m)}+1}^{\tilde{n}_{k(m)}} 1^2 \tau_{k(m)} \Big)^\2 \\
&\le&M_4\sqrt{\tilde{t}-t}.
\end{eqnarray*}
A similar reasoning yields 
\begin{eqnarray*}
R_3\le M_5\sqrt{\tilde{t}-t},\quad
R_4\le M_6\sqrt{\tilde{t}-t},\quad
R_5\le M_7\sqrt{\tilde{t}-t}.
\end{eqnarray*}
By \eqref{qqqq}, we obtain 
$$R_6\le M_8 h_{k(m)}^\alpha+M_8 h_{k(m)}\sqrt{\tilde{t}-t}.$$
Therefore, we see that for any (small) $\ep>0$ there exists $\tilde{M}(\ep)\in\N$ and $\tilde{t}>t$ such that $| (b_{k(m)},\phi)_{\Omega_{h_{k(m)}}}|<\ep$ for all $m\ge \tilde{M}(\ep)$ and all admissible $\phi$, which holds for $| (b_{l(m)},\phi)_{\Omega_{h_{l(m)}}}|$ as well. 
On the other hand, since $\{\rho_{k(m)}v_{k(m)}\}_{m\in\N}$ and  $\{\rho_{l(m)}v_{l(m)}\}_{m\in\N}$ weakly converge to $\tilde{v}$ as $m\to\infty$ due to Proposition \ref{weak-convergence}, we have
\begin{eqnarray*}
&&\!\!\!\!\!\!\!\!\!\!\Big|(a_{k(m)},\phi)_{\Omega_{h_{k(m)}}}-(a_{l(m)},\phi)_{\Omega_{h_{l(m)}}}\Big|=\Big|(a_{k(m)},\phi)_{\Omega_{h_{k(m)}}}-\frac{1}{\tilde{t}-t}  \int_t ^{\tilde{t}} (\tilde{v}(s,\cdot),\phi)_{L^2(\Omega)^3} ds \\
&&\quad 
+  \frac{1}{\tilde{t}-t}  \int_t ^{\tilde{t}} (\tilde{v}(s,\cdot),\phi)_{L^2(\Omega)^3} ds-(a_{l(m)},\phi)_{\Omega_{h_{l(m)}}}\Big|\\
&&\le  M_9\frac{\tau_{k(m)}+ \tau_{l(m)}}{(\tilde{t}-t)^2} +\Big| \frac{1}{\tilde{t}-t}  \int_t ^{\tilde{t}} (\rho_{k(m)}(s,\cdot)v_{k(m)}(s,\cdot)-\tilde{v}(s,\cdot),\phi)_{L^2(\Omega)^3} ds\Big| \\
&&\quad+ \Big| \frac{1}{\tilde{t}-t}  \int_t ^{\tilde{t}} (\rho_{l(m)}(s,\cdot)v_{l(m)}(s,\cdot)-\tilde{v}(s,\cdot),\phi)_{L^2(\Omega)^3} ds\Big|\quad\to0\mbox{\quad as $m\to\infty$,}
\end{eqnarray*}
where it is easy to check that the convergence is uniform within all admissible functions $\phi$. Thus, we conclude that $\nnorm v_{k(m)}(t,\cdot)-v_{l(m)}(t,\cdot)\nnorm_{\rm op}\to0$ as $m\to\infty$ for each $t\in(0,T)$ and we reach a contradiction.
\end{proof}  
\subsection{Strong convergence of $\{\rho_\delta\}$}         

Let  $v\in L^2([0,T];H^1_{0,\sigma}(\Omega))$ be the one mentioned in Proposition \ref{weak-convergence} and Theorem \ref{strong-convergence}. We extend $v$ to be $0$ outside $\Omega$, where the extended $v$ belongs to $L^2([0,T];H^1(\R^3)^3)$ and satisfies $\nabla\cdot v=0$. 

We first show that the step function generated by $\tilde{u}^n=\mathcal{A}^{k_n}_hu^n$ also strongly converges to $v$ in $L^2([0,T];L^2(\tOmega)^3)$. For this purpose, we prove that  $\mathcal{A}^{k_n}_h$ is such that 
\begin{eqnarray}\label{moll}
\lim_{\tau,h\to0+}\max_{0\le n\le T_\tau}{\rm vol}(A^{k_n}_h)=0.
\end{eqnarray}
If not, we find a constant $a>0$, sequences $\tau_m,h_m\to 0+$ as $m\to\infty$ and $n=n(m)\in\{0,1,\ldots,T_{\tau_m}\}$ for each $m\in\N$   such that ${\rm vol}(A^{k_{n(m)}}_{h_m})\ge a>0$. Then, we see that some $k<k_{n(m)}$ yields 
$$  \norm\mathcal{A}^{k}_{h_m}u^{n(m)}\norm_{\infty,\tOmega_{h_m}}>\frac{2}{7}h_{m}^{-1+\alpha},\quad {\rm vol}(A^{k}_{h_m})\ge \frac{a}{2},\quad \forall\,m\in\N. $$
Hence, there exists $x\in\tOmega_{h_m}$ such that 
\begin{eqnarray*}
\frac{2}{7}h_{m}^{-1+\alpha}<|\mathcal{A}^{k}_{h_m}u^{n(m)}(x)|
\le \frac{1}{ {\rm vol}(A^{k}_{h_m})}\sum_{y\in A^{k}_{h_m}\cap h_m \Z^3} |u^{n(m)}(x+y)|h^3\le \sqrt{\frac{2}{a}}\norm u^{n(m)}\norm_{2,\Omega_{h_m}}.
\end{eqnarray*}
This is a contradiction, since $\norm u^{n(m)}\norm_{2,\Omega_{h_m}}
$ is bounded independently from $m$.   

Recall that $u^{n}$ is extended to be $0$ outside $\Omega_h$ for  $\tilde{u}^n=\mathcal{A}^{k_n}_hu^n$; we consider the step function $v_\delta$ generated by the extended $u^{n}$. 
It follows from \eqref{moll} that for each fixed $\ep >0$ we have 
$\max_{0\le n\le T_\tau}{\rm diameter}(A^{k_n}_h)\le \ep$  for all sufficiently small $(\tau,h)$.
Observe that   
\begin{eqnarray*}
&&I_\delta^2:=\sum_{0\le n\le T_\tau}\sum_{x\in \tOmega_h}|\tilde{u}^{n}(x)-u^{n}(x)|^2h^3\tau\\
&&\quad = \sum_{0\le n\le T_\tau}\sum_{x\in \tOmega_h}\Big|  \frac{1}{ {\rm vol}(A^{k_{n}}_{h})}\sum_{y\in A^{k_{n}}_{h}\cap h \Z^3} (u^{n}(x+y)-u^{n}(x))h^3 \Big|^2h^3\tau\\
&&\quad \le \sum_{0\le n\le T_\tau}\sum_{x\in \tOmega_h}\Big|  \frac{1}{ {\rm vol}(A^{k_{n}}_{h})}\Big(\sum_{y\in A^{k_{n}}_{h}\cap h \Z^3} 1h^3\Big)^\2\\
&&\qquad \times\Big(\sum_{y\in A^{k_{n}}_{h}\cap h \Z^3} |u^{n+1}(x+y)-u^{n+1}(x)|^2h^3\Big)^\2 \Big|^2h^3\tau\\
&&\quad =\sum_{0\le n\le T_\tau}  \frac{1}{ {\rm vol}(A^{k_{n}}_{h})}\sum_{y\in A^{k_{n}}_{h}\cap h \Z^3}\sum_{x\in \tOmega_h} |u^{n}(x+y)-u^{n}(x)|^2h^3h^3\tau\\
&&\quad =  \sum_{0\le n < T_\tau}  \frac{1}{ {\rm vol}(A^{k_{n+1}}_{h})} \sum_{y\in A^{k_{n+1}}_{h}\cap h \Z^3}\norm v_\delta(t_{n+1},\cdot+y)-v_\delta(t_{n+1},\cdot)\norm_{L^2(\tOmega)^3}^2h^3\tau+O(\tau).
\end{eqnarray*}
Let $y^{n+1}\in A^{k_{n+1}}_{h}\cap h \Z^3$ achieve the supremum  
$$\sup_{y\in A^{k_{n+1}}_{h}\cap h \Z^3}\norm v_\delta(t_{n+1},\cdot+y)-v_\delta(t_{n+1},\cdot)\norm_{L^2(\tOmega)^3}.$$
 Define the step function $y_\delta(t):=y^{n+1}$, $t\in(n\tau,n\tau+\tau]$, where $|y_\delta(t)|\le \ep$. Then, we have
\begin{eqnarray*}
&&I_\delta^2 \le  \sum_{0\le n<T_\tau} \norm v_\delta(t_{n+1},\cdot+y^{n+1})-v_\delta(t_{n+1},\cdot)\norm_{L^2(\tOmega)^3}^2\tau+O(\tau)\\
&&\quad \le \int_0^T \norm v_\delta(t,\cdot+y_\delta(t))-v_\delta(t,\cdot)\norm_{L^2(\tOmega)^3}^2dt+O(\tau)\\
&&\quad \le  2\int_0^T \norm v_\delta(t,\cdot+y_\delta(t))-v(t,\cdot+y_\delta(t))\norm_{L^2(\tOmega)^3}^2dt\\
&&\qquad + 2 \int_0^T \norm v(t,\cdot+y_\delta(t))-v(t,\cdot)\norm_{L^2(\tOmega)^3}^2dt
+ 2\int_0^T \norm v(t,\cdot)-v_\delta(t,\cdot)\norm_{L^2(\tOmega)^3}^2dt\\
&&\qquad +O(\tau)\\
&&\quad\le 2 \int_0^T \norm v(t,\cdot+y_\delta(t))-v(t,\cdot)\norm_{L^2(\tOmega)^3}^2dt
+ 4\int_0^T \norm v(t,\cdot)-v_\delta(t,\cdot)\norm_{L^2(\tOmega)^3}^2dt\\
&&\qquad  +O(\tau).
\end{eqnarray*}
Fix an arbitrarily small $\ep_1>0$. We have $w\in C^\infty([0,T]\times\tOmega;\R^3)$ such that supp$(w)\subset (0,T)\times\tOmega$ and $\norm v-w\norm_{L^2([0,T];L^2(\tOmega)^3)}<\ep_1$, where $w$ is extended to be $0$ outside $[0,T]\times\tOmega$. Since $w$ is uniformly continuous, we may choose the above $\ep>0$ so that 
$$\max_{(t,x)\in [0,T]\times\tOmega,\,|y|\le\ep }|w(t,y+x)-w(t,x)|< \ep_1.$$
Observe that 
\begin{eqnarray*}
&& \int_0^T \norm v(t,\cdot+y_\delta(t))-v(t,\cdot)\norm_{L^2(\tOmega)^3}^2dt\\
&&\quad \le  2\int_0^T \norm v(t,\cdot+y_\delta(t))- w(t,\cdot+y_\delta(y))\norm_{L^2(\tOmega)^3}^2dt\\
&&\qquad + 2\int_0^T \norm w(t,\cdot+y_\delta(t))- w(t,\cdot)\norm_{L^2(\tOmega)^3}^2dt
 + 2\int_0^T \norm w(t,\cdot)- v(t,\cdot)\norm_{L^2(\tOmega)^3}^2dt\\
 &&\quad< 4 \ep_1^2+2{\rm vol}(\tOmega)T\ep_1^2\mbox{\,\,\,\, as  $\delta=(h,\tau)\to0$.}
\end{eqnarray*}
Therefore, we have 
$$\limsup_{\delta\to0} I_\delta^2 < 8 \ep_1^2+4{\rm vol}(\tOmega)T\ep_1^2.$$
Since $\ep_1>0$ is arbitrary, we conclude that 
\begin{eqnarray}\label{moll2}
I_\delta=\Big(\sum_{n=0}^{T_\tau}\sum_{x\in \tOmega_h}|\tilde{u}^{n}(x)-u^{n}(x)|^2h^3\tau\Big)^\2\to0 \quad\mbox{as $\delta=(\tau,h)\to0$}.
\end{eqnarray}
Furthermore, for all sufficiently small $(\tau,h)$ such that $\max_{0\le n\le T_\tau}{\rm diameter}(A^{k_n}_h)\le \epsilon_0/2$, where $\epsilon_0$ is the constant to compare $\Omega$ and $\tOmega$, we see that 
\begin{eqnarray}\label{boooo}
\tilde{u}^{n}(x)=0\mbox{\quad on $\p\tOmega_h$,\quad $0\le \forall\,n\le T_\tau$.}
\end{eqnarray}
\begin{Thm}\label{strong2}
The sequence $\{\rho_\delta\}$ mentioned in Proposition \ref{weak-convergence} and Proposition  \ref{pointwise}, which is weakly convergent to the weak limit $\rho$, converges to $\rho$ strongly in $L^2([0,T];L^2(\Omega))$. Furthermore, $\rho$ satisfies
\begin{eqnarray}\label{DP}
&&\int_0^T\int_\tOmega\Big( \rho(t,x)\p_t\varphi(t,x) +\rho(t,x)v(t,x)\cdot\nabla \varphi(t,x)\Big)\,dxdt+\int_\tOmega \rho^0(x)\varphi(0,x)dx=0,\\\nonumber
&&\mbox{$\forall\,\varphi\in C^\infty([0,T]\times\R^3;\R)$ with {\rm supp}$(\varphi)\subset[0,T)\times\R^3$ compact}.
\end{eqnarray}  
In particular, it holds that $\rho(t,x)=\rho^0_\ast$ a.e. on $\tOmega\setminus\Omega$.    
\end{Thm}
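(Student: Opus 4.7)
The plan is first to verify that the pair $(\rho,v)$ found in Proposition \ref{weak-convergence} satisfies \eqref{DP}. The scheme \eqref{d1} rewrites as $\eta^{n+1}-\eta^n=\tfrac{h^2}{7}\Delta^+\eta^n-\tau D\cdot(\eta^n\tilde u^n)$, where the averaging residue $\tfrac{1}{7}\sum_{\omega\in B}\eta^n(\cdot+h\omega)-\eta^n=\tfrac{h^2}{7}\Delta^+\eta^n$ is the seven-point discrete Laplacian. I would multiply by $\varphi^{n+1}:=\varphi(\tau(n+1),\cdot)$ for a test function $\varphi\in C^\infty([0,T]\times\R^3;\R)$ with compact support in $[0,T)\times\R^3$, and sum over $n$ and $x\in\tOmegain$. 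Discrete summation by parts in time produces $-\sum_x\eta^0\varphi^1 h^3-\sum_{n,x}\eta^n(\varphi^{n+1}-\varphi^n)h^3$ (the $t=T$ boundary vanishes since $\varphi^{T_\tau}=0$). The numerical viscosity contribution is of size $h^2/\tau=h^\alpha$ and tends to $0$. Spatial summation by parts turns $D\cdot(\eta^n\tilde u^n)\varphi^{n+1}$ into $-\eta^n\tilde u^n\cdot D\varphi^{n+1}$, and the splitting
\begin{equation*}
(\eta_\delta\tilde u_\delta,D\varphi)_{L^2}=(\eta_\delta(\tilde u_\delta-v),D\varphi)_{L^2}+(\eta_\delta,v\cdot D\varphi)_{L^2}
\end{equation*}
combined with the uniform bound $\norm\eta_\delta\norm_\infty\le\rho^0_{\ast\ast}$, the strong $L^2$-convergence $\tilde u_\delta\to v$ (which follows from Theorem \ref{strong-convergence} and \eqref{moll2}), and the weak $L^2$-convergence $\eta_\delta\wto\rho$, shows that the flux term converges to $-\int_0^T\!\!\int_\tOmega \rho v\cdot\nabla\varphi\,dxdt$. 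Passing to the limit yields \eqref{DP}.

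\textbf{Step 2 (Norm conservation via renormalization).} Extending $v$ by zero on $\R^3\setminus\Omega$ (so that $v\in L^2([0,T];H^1(\R^3)^3)$ with $\nabla\cdot v=0$ globally) and $\rho$ by $\rho^0_\ast$ on $\R^3\setminus\tOmega$, the pair $(\rho,v)$ solves $\partial_t\rho+\nabla\cdot(\rho v)=0$ on $(0,T)\times\R^3$ in the distributional sense. DiPerna-Lions theory (\cite{DiPerna-Lions}) then provides uniqueness, the $C([0,T];L^p_{\rm loc})$ representative for every $p\in[1,\infty)$, and the renormalization property: $\beta(\rho)$ satisfies the same equation for any $\beta\in C^1(\R)$ with $\beta'$ bounded on $[\rho^0_\ast,\rho^0_{\ast\ast}]$. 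Applying this with $\beta(s)=s^2$ and integrating over $\tOmega$, noting that $v\equiv 0$ on a neighborhood of $\p\tOmega$ by \eqref{2epepe}, I obtain
\begin{equation*}
\norm\rho(t,\cdot)\norm_{L^2(\tOmega)}^2=\norm\rho^0\norm_{L^2(\tOmega)}^2\quad\text{for every }t\in[0,T].
\end{equation*}

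\textbf{Step 3 (Strong convergence and boundary identification).} The bound \eqref{e-estimate1} with $p=2$ reads $\norm\eta^{n+1}\norm_{2,\tOmega_h}^2\le\norm\rho^0\norm_{L^2(\tOmega)}^2-\sum_{m=0}^n\sum_{x\in\tOmegain}D\cdot(|\eta^m|^2\tilde u^m)(x)\tau$. By \eqref{boooo}, $\tilde u^m\equiv 0$ on $\p\tOmega_h$ for all sufficiently small $(\tau,h)$, so discrete summation by parts on $\tOmegain$ kills the divergence sum, leaving $\norm\rho_\delta(t,\cdot)\norm_{L^2(\tOmega)}^2\le\norm\rho^0\norm_{L^2(\tOmega)}^2+o(1)$ uniformly in $t$. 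Combined with Step 2 and the pointwise weak convergence of Proposition \ref{pointwise}, the squeeze
\begin{equation*}
\norm\rho(t,\cdot)\norm_{L^2(\tOmega)}^2\le\liminf_\delta\norm\rho_\delta(t,\cdot)\norm_{L^2(\tOmega)}^2\le\limsup_\delta\norm\rho_\delta(t,\cdot)\norm_{L^2(\tOmega)}^2\le\norm\rho^0\norm_{L^2(\tOmega)}^2=\norm\rho(t,\cdot)\norm_{L^2(\tOmega)}^2
\end{equation*}
forces $\norm\rho_\delta(t,\cdot)\norm_{L^2(\tOmega)}\to\norm\rho(t,\cdot)\norm_{L^2(\tOmega)}$ for every $t$, hence $\rho_\delta(t,\cdot)\to\rho(t,\cdot)$ strongly in $L^2(\tOmega)$ by weak convergence plus norm convergence. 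Since these norms are uniformly bounded by $\rho^0_{\ast\ast}\sqrt{{\rm vol}(\tOmega)}$, dominated convergence in $t$ upgrades the result to strong convergence in $L^2([0,T];L^2(\tOmega))$. For the last claim, on $\tOmega\setminus\Omega$ the extended $v$ vanishes; testing \eqref{DP} against $\varphi\in C^\infty_0((0,T)\times(\tOmega\setminus\Omega))$ gives $\p_t\rho=0$ there, and the initial condition $\rho^0\equiv\rho^0_\ast$ on $\tOmega\setminus\Omega$ forces $\rho(t,x)=\rho^0_\ast$ a.e.

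\textbf{Main obstacle.} The critical subtlety is the bookkeeping between the support of $\tilde u^n$, the $0$-extension convention, and the boundary $\p\tOmega_h$: without \eqref{boooo} the corrective divergence sum in \eqref{e-estimate1} would not vanish, and the renormalized bound could not be closed at the discrete level. A secondary point is that classical DiPerna-Lions is stated for data in $L^p(\R^3)$ while our $\rho$ is only $L^\infty$; this is handled by working with the compactly supported difference $\rho-\rho^0_\ast$, which satisfies the same linear transport equation, or equivalently by noting that the constant $\rho^0_\ast$ is itself a stationary solution since $v$ is compactly supported in $\Omega$.
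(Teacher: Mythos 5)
Your overall strategy is the same as the paper's: pass to the limit in \eqref{d1} using the weak convergence of $\{\rho_\delta\}$ together with the strong convergence of $\{v_\delta\}$ and \eqref{moll2}, identify $\rho\equiv\rho^0_\ast$ on $\tOmega\setminus\Omega$, and then combine the DiPerna--Lions $L^2$-norm conservation with the discrete bound \eqref{e-estimate1} for $p=2$ (where \eqref{boooo} kills the divergence sum) and weak lower semicontinuity to upgrade weak to strong convergence. Your Step 2 and Step 3 are essentially the paper's argument (the paper does the squeeze on the space--time norm at once and cites \cite{DiPerna-Lions}/\cite{Tenan} for conservation, while you argue $t$-pointwise via Proposition \ref{pointwise} and dominated convergence; both are fine, and your remark on handling $L^\infty$ data via $\rho-\rho^0_\ast$ is a legitimate way to make the citation precise).

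The genuine gap is in Step 1: you test the scheme directly with $\varphi\in C^\infty([0,T]\times\R^3)$ that need not vanish near $\p\tOmega$, summing over $x\in\tOmegain$, and you only account for the interior consistency error $h^2/\tau=h^\alpha$. But the shift manipulations (replacing $\frac17\sum_\omega\eta^n(x+h\omega)\varphi(x)$ by $\eta^n(x)\cdot\frac17\sum_\omega\varphi(x-h\omega)$, and the spatial summation by parts of $D\cdot(\eta^n\tilde u^n)$ and of the implicit discrete Laplacian hidden in the seven-point average) produce mismatch terms supported in a one-cell layer around $\p\tOmega_h$ whenever $\varphi$ does not vanish there. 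A naive count gives $O(h^{-2})$ such points, each contributing $O(h^3)$, i.e.\ $O(h)$ per time step, hence $O(h)\cdot T/\tau=O(h^{\alpha-1})$ in total, which diverges under the scaling \eqref{scale}. These terms are not harmless unless one exploits extra structure (e.g.\ that $\eta^n$ stays essentially equal to $\rho^0_\ast$ near $\p\tOmega_h$, which the seven-point averaging only guarantees up to a nontrivial large-deviation-type estimate, since the stencil propagates information a distance $Th/\tau=Th^{\alpha-1}\to\infty$). The paper avoids this entirely by a two-stage argument: first prove the weak form \eqref{41414141} only for $\varphi$ with ${\rm supp}(\varphi)\subset[0,T)\times\tOmega$ (so all boundary layers are multiplied by $\varphi\equiv0$), deduce from it that $\rho=\rho^0_\ast$ a.e.\ on $\tOmega\setminus\Omega$, and only then extend to arbitrary $\varphi$ at the continuous level by a cut-off $\tilde\varphi$, using $v\equiv0$ and $\rho$ constant on $\tOmega\setminus\Omega$ (identity \eqref{DP2}). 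Your Step 3 boundary identification already uses only interior test functions, so reorganizing your Step 1 in this order closes the gap at no extra cost; as written, however, the limit passage to the full \eqref{DP} is not justified.
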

\begin{proof}
We convert \eqref{d1} into a weak form. First, we argue within the class of test functions $\varphi\in C^\infty([0,T]\times\tOmega;\R)$ with {\rm supp}$(\varphi)\subset[0,T)\times\tOmega$.  
Fix such an arbitrary test function $\varphi$.  Shifting $x$ to $x\mp h \omega$, we have for all sufficiently small $(\tau,h)$,  
\begin{eqnarray*}
&&\sum_{n=0}^{T_\tau-1}\sum_{x\in \tOmega_h} \Big(\eta^{n+1}(x)-  \frac{1}{7}\sum_{\omega\in B}\eta^n(x+h \omega)  \Big)\frac{1}{\tau} \varphi(t_n,x) h^3\tau\\
&&\quad =\sum_{n=0}^{T_\tau-1}\sum_{x\in \tOmega_h} \Big(\eta^{n+1}(x) \varphi(t_n,x) -  \eta^n(x)\frac{1}{7}\sum_{\omega\in B}\varphi(t_n,x-h \omega)  \Big)\frac{1}{\tau} h^3\tau\\
&&\quad=\sum_{n=0}^{T_\tau-1}\sum_{x\in \tOmega_h} \Big(\eta^{n+1}(x) \varphi(t_n,x) -  \eta^n(x)\varphi(t_n,x) +\eta^n(x)O(h^2)  \Big)\frac{1}{\tau} h^3\tau\\
&&\quad=\sum_{n=0}^{T_\tau-1}\sum_{x\in \tOmega_h} \Big(\eta^{n+1}(x) \varphi(t_{n+1},x) -  \eta^n(x)\varphi(t_n,x) \Big) h^3\\
&&\qquad -\sum_{n=0}^{T_\tau-1}\sum_{x\in \tOmega_h} \eta^{n+1}(x)\frac{ \varphi(t_{n+1},x)- \varphi(t_{n},x)}{\tau} h^3\tau
 +\sum_{n=0}^{T_\tau-1}\sum_{x\in \tOmega_h}\eta^n(x)O\Big(\frac{h^2}{\tau}\Big)  h^3\tau\\
 &&\quad =-\sum_{x\in \tOmega_h} \eta^0(x)\varphi(0,x)h^3
  -\sum_{n=0}^{T_\tau-1}\sum_{x\in \tOmega_h} \eta^{n+1}(x)\p_t\varphi(t_{n+1},x)h^3\tau+O(h^{\alpha}),
\end{eqnarray*}
where we also note that $\varphi\equiv0$ near $t=T$ and $\eta^n$ is bounded.   Similarly,  we have 
\begin{eqnarray*}
&&\sum_{n=0}^{T_\tau-1}\sum_{x\in \tOmega_h}  D\cdot(\eta^n\tilde{u}^n)(x)\varphi(t_n,x) h^3\tau
 =-\sum_{n=0}^{T_\tau-1}\sum_{x\in  \tOmega_h}  \eta^n(x)\tilde{u}^n(x)\cdot D\varphi(t_n,x) h^3\tau\\
&&\quad =- \sum_{n=0}^{T_\tau-1}\sum_{x\in  \tOmega_h}
\eta^n(x)\tilde{u}^n(x)\cdot\nabla\varphi(t_n,x)h^3\tau
 -\sum_{n=0}^{T_\tau-1}\sum_{x\in  \tOmega_h}
\eta^n(x)\tilde{u}^n(x)\cdot O(h^2)h^3\tau.
\end{eqnarray*}
Therefore,  the weak form of \eqref{d1} is 
\begin{eqnarray}\label{weak1}
&&\qquad 0=\sum_{x\in \tOmega_h} \eta^0(x)\varphi(0,x)h^3
  +\sum_{n=0}^{T_\tau-1}\sum_{x\in \tOmega_h} \eta^{n+1}(x)\p_t\varphi(t_{n+1},x)h^3\tau\\\nonumber
&&\quad  + \sum_{n=0}^{T_\tau-1}\sum_{x\in  \tOmega_h}
\eta^n(x)\tilde{u}^n(x)\cdot\nabla\varphi(t_n,x)h^3\tau
 +\sum_{n=0}^{T_\tau-1}\sum_{x\in  \tOmega_h}
\eta^n(x)\tilde{u}^n(x)\cdot O(h^2)h^3\tau+O(h^{\alpha}).
\end{eqnarray}
 It follows from the weak convergence of $\{\rho_\delta\}$ and strong convergence of $\{v_\delta\}$ in \eqref{weak1} together with \eqref{moll2} that  
\begin{eqnarray}\label{41414141}
\int_\tOmega \rho^0\varphi(0,\cdot)dx+\int_0^T\int_\tOmega \rho \p_t\varphi dxdt+\int_0^T\int_\tOmega \rho v\cdot\nabla \varphi dxdt=0, 
\end{eqnarray}
where we note that 
\begin{eqnarray*}
&&\sum_{x\in \tOmega_h} \eta^0(x)\varphi(0,x)h^3
=\sum_{x\in \tOmega_h} h^{-3}\int_{C_h^+(x)}\rho^0(y)dy\varphi(0,x)h^3\\
&&\quad =\sum_{x\in \tOmega_h} \int_{C_h^+(x)}\rho^0(y)\varphi(0,y)dy+\sum_{x\in \tOmega_h} \int_{C_h^+(x)}\rho^0(y)(\varphi(0,x)-\varphi(0,y))dy\\
&&\quad \to \int_{t\Omega}\rho^0(x)\varphi(0,x)dx\quad\mbox{ as $\delta\to0$.}
\end{eqnarray*}
We show that $\rho=\rho^0_{\ast}$ a.e. on $\tOmega\setminus\Omega$. Take any test function $\varphi(t,x)=F(t)G(x)$ such that supp$(F)\subset(0,T)$ and supp$(G)\subset\tOmega\setminus\bar{\Omega}$. Since $v\equiv0$ on  $\tOmega\setminus\Omega$,  \eqref{41414141} yields 
$$\int_{\tOmega\setminus\bar{\Omega}}\Big(\int_0^T\rho(t,x)F'(t)dt \Big)G(x)dx=0,$$
which implies that 
\begin{eqnarray*}
&&\int_0^T\rho(t,x)F'(t)dt=0\mbox{\quad a.e. $x\in\tOmega\setminus\bar{\Omega}$;}\\
&&\rho(t,x)\mbox{ is constant  with respect to $t\in[0,T]$ for a.e. fixed  $x\in\tOmega\setminus\bar{\Omega}$.}
\end{eqnarray*}
Hence, including $F$ such that $F(0)\neq0$, we have 
\begin{eqnarray*}
0&=&\int_0^T\int_{\tOmega\setminus\bar{\Omega}}\rho(t,x)F'(t)G(x)dxdt +\int_{\tOmega\setminus\bar{\Omega}}\rho^0(x)F(0)G(x)dx \\
&=&F(0)\int_{\tOmega\setminus\bar{\Omega}}(\rho^0(x)-\rho(t,x))G(x)dx,
\end{eqnarray*}
which implies that $\rho(t,x)=\rho^0(x)=\rho^0_\ast$ a.e. on $\tOmega\setminus\Omega$, where $\rho^0$ has been extended to be $\rho^0_\ast$ outside $\Omega$.   

Now, we extend the class of test functions as mentioned in \eqref{DP}. For any such test function $\varphi$, consider a smooth cut-off $\tilde{\varphi}$ of $\varphi$ with respect to the $x$-variable such that  {\rm supp}$(\tilde{\varphi})\subset[0,T)\times\tOmega$ and $\varphi=\tilde{\varphi}$ on $\Omega$. 
Since  $\rho(t,x)=\rho^0(x)=\rho^0_\ast$  and $v(t,x)=0$ a.e.  $(t,x)\in[0,T]\times(\tOmega\setminus\Omega)$, it holds that 
\begin{eqnarray}\nonumber
0&=&\int_\tOmega \rho^0\tilde{\varphi}(0,\cdot)dx+\int_0^T\int_\tOmega \rho \p_t\tilde{\varphi} dxdt+\int_0^T\int_\tOmega \rho v\cdot\nabla \tilde{\varphi} dxdt\\\nonumber
&=&\int_\Omega \rho^0\tilde{\varphi}(0,\cdot)dx+\int_0^T\int_\Omega \rho \p_t\tilde{\varphi} dxdt+\int_0^T\int_\Omega \rho v\cdot\nabla \tilde{\varphi} dxdt\\\nonumber
&&+\int_{\tOmega\setminus\Omega} \rho^0\tilde{\varphi}(0,\cdot)dx+\int_0^T\int_{\tOmega\setminus\Omega} \rho \p_t\tilde{\varphi} dxdt+\int_0^T\int_{\tOmega\setminus\Omega} \rho v\cdot\nabla \tilde{\varphi} dxdt\\\nonumber
&=&\int_\Omega \rho^0\varphi(0,\cdot)dx+\int_0^T\int_\Omega \rho \p_t\varphi dxdt+\int_0^T\int_\Omega \rho v\cdot\nabla \varphi dxdt,\\\label{DP2}
0&=&\int_{\tOmega\setminus\Omega} \rho^0\varphi(0,\cdot)dx+\int_0^T\int_{\tOmega\setminus\Omega} \rho \p_t\varphi dxdt+\int_0^T\int_{\tOmega\setminus\Omega} \rho v\cdot\nabla \varphi dxdt.
\end{eqnarray}  
Therefore, we see that 
\begin{eqnarray*}
&&\int_\tOmega \rho^0\varphi(0,\cdot)dx+\int_0^T\int_\tOmega \rho \p_t\varphi dxdt+\int_0^T\int_\tOmega \rho v\cdot\nabla \varphi dxdt=0.
\end{eqnarray*}  
\indent In order to prove that the weak convergence is in fact strong convergence, we use the fact that an $L^2([0,T];L^2(\tOmega))$-function $\rho$ satisfying \eqref{DP} conserves its $L^2(\Omega)$-norm, i.e., 
$$\norm \rho(t,\cdot)\norm_{L^2(\tOmega)}=\norm \rho^0\norm_{L^2(\tOmega)},\quad\forall\,t\in[0,T]. $$
This is shown in \cite{DiPerna-Lions} for  problems on the whole space; our current bounded domain case can be  reduced to the whole space case by $0$-extension of $\rho$ and $v$ (see Introduction of \cite{Soga});  Tenan \cite{Tenan} directly proved counterparts of  \cite{DiPerna-Lions} for problems on a bounded domain. The general property of weak convergence provides 
\begin{eqnarray}\label{4st}
\sqrt{T}\norm \rho^0\norm_{L^2(\tOmega)}=\norm \rho\norm_{L^2([0,T];L^2(\tOmega))}\le \liminf_{\delta\to0} \norm \rho_\delta\norm_{L^2([0,T];L^2(\tOmega))}.
\end{eqnarray}
On the other hand,  for all sufficiently small $(\tau,h)$ such that \eqref{boooo} holds, 
\eqref{e-estimate1} with $p=2$ leads to 
\begin{eqnarray}\nonumber 
&&\norm \eta^{n+1}\norm_{2,\tOmega_h}\le \norm \rho^0\norm_{L^2(\tOmega)}, \quad 1\le \forall\,n+1\le T_\tau,\\\label{4st2}
 &&\limsup_{\delta\to0} \norm \rho_\delta\norm_{L^2([0,T];L^2(\tOmega))}\le \sqrt{T} \norm \rho^0\norm_{L^2(\tOmega)}=\norm \rho\norm_{L^2([0,T];L^2(\tOmega))}.
\end{eqnarray}
\eqref{4st} and \eqref{4st2} conclude that $\{\rho_\delta\}$ converges to $\rho$ strongly in $L^2([0,T];L^2(\tOmega))$. 
\end{proof}
\begin{Cor}\label{ssssss}
It holds that $ \rho_\delta\to\rho$, $\mu\circ\rho_\delta\to\mu\circ\rho$ in $L^p([0,T]; L^p(\tOmega))$ as $\delta\to0$ for any $p\in[1,\infty)$.
\end{Cor}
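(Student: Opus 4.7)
The plan is to deduce both strong convergences from the already-established $L^2$-strong convergence $\rho_\delta \to \rho$ in Theorem \ref{strong2}, combined with the uniform pointwise bounds $\rho^0_\ast \le \rho_\delta, \rho \le \rho^0_{\ast\ast}$ from Proposition \ref{3estimate} and Proposition \ref{pointwise}.

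First, for $\rho_\delta \to \rho$ in $L^p([0,T]; L^p(\tOmega))$, I would split by cases in $p$. If $p \in [1,2]$, H\"older's inequality on the finite-measure set $[0,T]\times\tOmega$ bounds the $L^p$-norm by a multiple of the $L^2$-norm, so the claim reduces immediately to Theorem \ref{strong2}. If $p > 2$, the pointwise bound $|\rho_\delta - \rho| \le 2\rho^0_{\ast\ast}$ gives the interpolation $|\rho_\delta - \rho|^p \le (2\rho^0_{\ast\ast})^{p-2}|\rho_\delta - \rho|^2$, which after integration over $[0,T]\times\tOmega$ again reduces the claim to $L^2$-strong convergence.

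Second, for $\mu \circ \rho_\delta \to \mu \circ \rho$, I would use that $\mu$ is continuous on the compact interval $[\rho^0_\ast, \rho^0_{\ast\ast}]$, hence uniformly continuous and bounded there (by $\mu_{\ast\ast}$ from \eqref{e-estimate0}). From the $L^p$-convergence obtained in the first step, any subsequence of $\{\rho_\delta\}$ admits a further sub-subsequence converging to $\rho$ almost everywhere on $[0,T]\times\tOmega$; continuity of $\mu$ then yields a.e.\ convergence $\mu \circ \rho_\delta \to \mu \circ \rho$ along the sub-subsequence, and Lebesgue's dominated convergence theorem (dominated by the constant $\mu_{\ast\ast}$) upgrades this to $L^p$-strong convergence along that sub-subsequence. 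The standard subsequence principle then promotes the conclusion to the whole sequence.

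I do not anticipate any real obstacle: every ingredient (strong $L^2$-convergence of $\rho_\delta$, uniform $L^\infty$-bounds on $\rho$ and $\rho_\delta$, continuity of $\mu$) is already in hand from the preceding analysis. The only point demanding a modicum of care is the whole-sequence convergence of $\mu \circ \rho_\delta$, handled via the subsequence principle rather than a direct quantitative estimate.
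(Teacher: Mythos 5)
Your proposal is correct and follows essentially the same route as the paper: for $\mu\circ\rho_\delta$ the paper likewise combines a.e.\ convergence along subsequences (extracted from the $L^2$-strong convergence of Theorem \ref{strong2}), the uniform bound by $\mu_{\ast\ast}$, dominated convergence, and a subsequence/contradiction argument to pass to the whole sequence. The only cosmetic difference is that the paper obtains $\rho_\delta\to\rho$ in $L^p$ by applying this same argument with $\mu=\mathrm{id}$, whereas you handle it directly via H\"older/interpolation using the uniform $L^\infty$ bounds, which is equally valid.
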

\begin{proof}
Let $p\in[1,\infty)$ be arbitrary.  It follows from Theorem \ref{strong2} that there exists a subsequence $\{\rho_m\}_{m\in\N}\subset\{\rho_\delta\}$ such that $\rho_m(t,x)\to\rho(t,x)$ a.e. $(t,x)\in[0,T]\times\tOmega$ as $m\to\infty$. Since $\mu$ is continuous, we have  $|\mu(\rho_m(t,x))-\mu(\rho(t,x))|^p\to0$ a.e. $(t,x)\in[0,T]\times\tOmega$ as $m\to\infty$. Since $|\mu(\rho_m(t,x))-\mu(\rho(t,x))|^p\le (2\mu_{\ast\ast})^p$, Lebesgue's dominated convergence theorem shows that $\norm \mu\circ\rho_m-\mu\circ\rho\norm_{L^p([0,T]);L^p(\Omega)}\to0$ as $m\to\infty$. 
 If $\{\mu\circ\rho_\delta\}$ does not converge to $\mu\circ\rho$ in $L^p([0,T]; L^p(\tOmega))$ as $\delta\to0$, we have a constant $\ep>0$ and a subsequence  $\{\tilde{\rho}_m\}_{m\in\N}\subset\{\rho_\delta\}$ such that  $\norm \mu\circ\tilde{\rho}_m-\mu\circ\rho\norm_{L^p([0,T]);L^p(\Omega)}\ge \ep$ for all $m$; however,  $\{\tilde{\rho}_m\}_{m\in\N}$ still converges to $\rho$ strongly in $L^2([0,T];L^2(\tOmega))$ as $m\to\infty$ and we have  a subsequence  $\{\hat{\rho}_m\}_{m\in\N}\subset\{\tilde{\rho}_m\}_{m\in\N}$ such that $\hat{\rho}_m(t,x)\to\rho(t,x)$ a.e. $(t,x)\in[0,T]\times\tOmega$ as $m\to\infty$;  this causes a contradiction.  
$ \rho_\delta\to\rho$ follows from the above argument with $\mu=id$. 
 \end{proof}
\subsection{Convergence to a weak solution}
We prove the following theorem: 
\begin{Thm}
The pair $\rho,v$ of the  limits of $\{\rho_\delta\}$ and  $\{v_\delta\}$ is a weak solution of \eqref{NS}.  
\end{Thm}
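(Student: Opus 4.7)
The plan is to test the discrete momentum equation \eqref{d3} against $\phi(t_{n+1},\cdot)$ for an arbitrary $\phi\in C^\infty([0,T]\times\Omega;\R^3)$ with $\mathrm{supp}(\phi)\subset[0,T)\times\Omega$ and $\nabla\cdot\phi=0$, sum against $h^3\tau$ over $1\le n+1\le T_\tau$ and $x\in\Omega_h\setminus\p\Omega_h$, transfer every discrete derivative onto $\phi$ via \eqref{by-parts}, and replace $D^\pm\phi,D\phi$ by $\nabla\phi$ modulo $O(h^2)$; then pass to the limit $\delta\to0$ term by term. For $h$ small enough the support of $\phi$ is compactly contained in $\Omega_h^{\circ}$, so no boundary contribution from summation by parts survives. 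Once the five terms of \eqref{weak-form-NS} are recovered, the definition of weak solution is closed by Proposition \ref{weak-convergence} and Theorem \ref{strong2}, together with the $L^\infty_tL^2_x$-regularity of $v$ extracted from \eqref{3es1} by the usual weak-$\ast$ argument.

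For the discrete time-derivative of $\eta^{n+1}u^{n+1}$ I would shift $x\mapsto x-h\omega$ inside the $\frac17\sum_{\omega\in B}$ average and use $\frac17\sum_{\omega\in B}\phi(t_{n+1},x-h\omega)=\phi(t_{n+1},x)+O(h^2)$ exactly as in the derivation of \eqref{weak1}; a summation by parts in $n$ then produces $-\int_\Omega\rho^0 v^0\cdot\phi(0,\cdot)\,dx-\int_0^T\!\int_\Omega \rho v\cdot\p_t\phi\,dxdt$, where the initial term uses $\eta^0 u^0\to\rho^0 v^0$ (Lebesgue differentiation plus the $L^\infty$-bound on $\eta^0$) and the interior term uses the weak $L^2_tL^2_x$-convergence of $\rho_\delta v_\delta$ from Proposition \ref{weak-convergence}. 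The two convective terms in \eqref{d3} assemble after summation by parts into the discrete analogue of $\sum_j\p_{x_j}(\rho v_j v)$; combining the $L^\infty$-bound and pointwise a.e.\ convergence of $\rho_\delta$ (Theorem \ref{strong2}), strong $L^2_tL^2_x$-convergence of $v_\delta$ (Theorem \ref{strong-convergence}), and the mollifier bound \eqref{moll2} that lets me swap $\tilde u^n$ for $u^n$, I obtain $\rho_\delta v_{\delta,i}v_{\delta,j}\to\rho v_iv_j$ in $L^1_{t,x}$ and hence the limit $-\sum_j\int_0^T\!\int_\Omega\rho v_j v\cdot\p_{x_j}\phi\,dxdt$. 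The viscous term, after summation by parts, is a pairing $(\mu(\eta^{n+1})(D^+u^{n+1}_i+D^+_iu^{n+1}),D^+\phi_i)$ which is a strong$\times$weak product: Corollary \ref{ssssss} supplies strong $L^p$-convergence of $\mu(\rho_\delta)$ and Proposition \ref{weak-convergence} the weak $L^2_tL^2_x$-convergence of $w^i_\delta$ to $\p_{x_i}v$, yielding in the limit $\sum_j\int_0^T\!\int_\Omega\mu(\rho)(\p_{x_j}v+\nabla v_j)\cdot\p_{x_j}\phi\,dxdt$. The forcing term passes by strong convergence of $\rho_\delta$ and of $f^{n+1}$ in $L^2_tL^2_x$, while the discrete pressure term is killed by \eqref{qqqq}: its total contribution is bounded in absolute value by $\sum_n\bigl(O(h^\alpha)+O(h)\sum_j\|D_j^+u^{n+1}\|_{2,\Omega_h}+O(h^2)\|f^{n+1}\|_{2,\Omega_h}\bigr)\tau$, which is $O(h^\alpha)$ by Cauchy--Schwarz together with \eqref{3es2}.

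The main obstacle is the nonlinear convection term, which in three dimensions would ordinarily require a higher-integrability control on the velocity; here it is by-passed because one factor of the triple product is paired with the smooth $\p_{x_j}\phi\in L^\infty$, leaving the other two factors to converge as a strong$\times$strong product in $L^1_{t,x}$, a convergence secured by Theorem \ref{strong-convergence}, Theorem \ref{strong2}, and the uniform $L^\infty$-bound $\rho_\delta\le\rho^0_{\ast\ast}$. A minor secondary concern is the initial layer $n=0$, where $u^0$ need not be discretely divergence-free; however, the structure of the discrete $t$-derivative together with the $\omega$-average still produces the correct initial contribution $\int_\Omega\rho^0 v^0\cdot\phi(0,\cdot)\,dx$ after the same shifting argument, because $u^0$ only appears inside the time-difference and not inside any $D$-operator that would require boundary/divergence-free information. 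Assembling the five limits reproduces \eqref{weak-form-NS} and completes the proof that $(\rho,v)$ is a weak solution of \eqref{NS}.
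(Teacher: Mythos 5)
Your proposal takes essentially the same route as the paper's own proof: test \eqref{d3} with solenoidal test functions, sum by parts in space and time, kill the pressure contribution via \eqref{qqqq} (with \eqref{3es2}) and the mollification error via \eqref{moll2}, pass to the limit term by term using Proposition \ref{weak-convergence}, Theorem \ref{strong-convergence}, Theorem \ref{strong2} and Corollary \ref{ssssss}, and obtain the transport identity \eqref{1313} from \eqref{DP}--\eqref{DP2}. The only places you are looser than the paper are minor: the time-derivative term requires the identification $\tilde{v}=\rho v$ of the weak limit of $\rho_\delta v_\delta$ (which follows from the strong convergences you already invoke, not from weak convergence alone), and the convergence of the initial term $\sum_x\eta^0 u^0\cdot\phi(0,x)h^3$ is handled in the paper by approximating $v^0$ with continuous functions to control the product of the two cell averages, essentially the argument you compress into ``Lebesgue differentiation plus the $L^\infty$-bound.''
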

\begin{proof}
It follows from \eqref{DP} and \eqref{DP2} that $\rho$ satisfies \eqref{1313}.  

Next, we show that $\rho$ and $v$ satisfy \eqref{weak-form-NS}. Note that $v$ belongs to $L^\infty([0,T];L^2(\Omega)^3)$, because $v_\delta \in L^\infty([0,T];L^2(\Omega)^3)$ has $\delta$-independent bound of $\norm v_\delta \norm_{L^\infty([0,T];L^2(\Omega)^3)}$ due to \eqref{3es1}. Take an arbitrary test function $\phi$ and consider sufficiently small $\delta$. We have 
\begin{eqnarray*}
&&\sum_{n=0}^{T_\tau-1}\sum_{x\in \Omega_h} \Big(\eta^{n+1}(x)u^{n+1}(x) - \frac{1}{7}\sum_{\omega\in B}\eta^n(x+h \omega)u^n(x+h \omega)  \Big)\frac{1}{\tau}\cdot \phi(t_n,x) h^3\tau\\
&&\quad =\sum_{n=0}^{T_\tau-1}\sum_{x\in \Omega_h} \Big(\eta^{n+1}(x)u^{n+1}(x)\cdot \phi(t_n,x) -  \eta^n(x)u^{n}(x)\cdot\frac{1}{7}\sum_{\omega\in B}\phi(t_n,x-h \omega)  \Big)\frac{1}{\tau} h^3\tau\\
&&\quad=\sum_{n=0}^{T_\tau-1}\sum_{x\in \Omega_h} \Big(\eta^{n+1}(x)u^{n+1}(x)\cdot \phi(t_n,x) -  \eta^n(x)u^{n}(x)\cdot\phi(t_n,x) \\
&&\qquad +\eta^n(x)u^{n}(x)\cdot O(h^2)  \Big)\frac{1}{\tau} h^3\tau\\
&&\quad=\sum_{n=0}^{T_\tau-1}\sum_{x\in \Omega_h} \Big(\eta^{n+1}(x)u^{n+1}(x)\cdot \phi(t_{n+1},x) -  \eta^n(x)u^{n}(x)\cdot\phi(t_n,x) \Big) h^3\\
&&\qquad -\sum_{n=0}^{T_\tau-1}\sum_{x\in \Omega_h} \eta^{n+1}(x)u^{n+1}(x)\cdot\frac{ \phi(t_{n+1},x)- \phi(t_{n},x)}{\tau} h^3\tau\\
&&\qquad  +\sum_{n=0}^{T_\tau-1}\sum_{x\in \Omega_h}\eta^n(x)u^{n}(x)\cdot O\Big(\frac{h^2}{\tau}\Big)  h^3\tau\\
 &&\quad =-\sum_{x\in \Omega_h} \eta^0(x)u^{0}(x)\cdot\phi(0,x)h^3
  -\sum_{n=0}^{T_\tau-1}\sum_{x\in \Omega_h} \eta^{n+1}(x)u^{n+1}(x)\cdot\p_t\phi(t_{n+1},x)h^3\tau\\
  &&\qquad +O(h^{\alpha})\\
  &&\quad =-\sum_{x\in \Omega_h} \eta^0(x)u^{0}(x)\cdot\phi(0,x)h^3-\int_0^T\int_\Omega\rho_\delta(t,x)v_\delta(t,x)\cdot \p_t\phi(t,x)dxdt+O(h^\alpha),
\end{eqnarray*}
where we note that $\phi\equiv0$ near $t=T$. Similarly, we have  
\begin{eqnarray*}
&&\sum_{n=0}^{T_\tau-1}\sum_{x\in \Omega_h}\sum_{j=1}^3\Big\{D_j(\eta^n\tilde{u}^n_j)(x)u^{n+1}(x) +\frac{1}{2}\Big(
\eta^n(x-he^j)\tilde{u}^n_j(x-he^j)D_ju^{n+1}(x-he^j)\\\nonumber
&&\qquad\qquad +
\eta^n(x+he^j)\tilde{u}^n_j(x+he^j)D_ju^{n+1}(x+he^j)\Big)\Big\}\cdot\phi(t_n,x) h^3\tau\\\nonumber
&&=\sum_{n=0}^{T_\tau-1}\sum_{x\in \Omega_h}\sum_{j=1}^3\Big\{
\eta^n(x)\tilde{u}^n_j(x)u^{n+1}(x-he^j)\phi(t_n,x-he^j)\\
&&-\eta^n(x)\tilde{u}^n_j(x)u^{n+1}(x+he^j)\phi(t_n,x+he^j)
+\frac{1}{2}\Big(\eta^n(x)\tilde{u}^n_j(x)u^{n+1}(x+he^j)\phi(t_n,x+he^j)\\
&&-\eta^n(x)\tilde{u}^n_j(x)u^{n+1}(x-he^j)\phi(t_n,x+he^j)
+\eta^n(x)\tilde{u}^n_j(x)u^{n+1}(x+he^j)\phi(t_n,x-he^j)\\
&&-\eta^n(x)\tilde{u}^n_j(x)u^{n+1}(x-he^j)\phi(t_n,x-he^j)\Big)
\big\}\frac{h^3\tau}{2h}\\
&&=-\sum_{n=0}^{T_\tau-1}\sum_{x\in \Omega_h}\sum_{j=1}^3\eta^n(x)\tilde{u}^n_j(x)\frac{u^{n+1}(x-he^j)+u^{n+1}(x+he^j)}{2}\cdot D_j\phi(t_n,x)h^3\tau\\
&&=-\sum_{n=0}^{T_\tau-1}\sum_{x\in \Omega_h}\sum_{j=1}^3\eta^n(x)\tilde{u}^n_j(x)\frac{u^{n+1}(x-he^j)+u^{n+1}(x+he^j)}{2}\cdot \p_{x_j}\phi(t_n,x)h^3\tau+O(h^2)\\
&&=-\sum_{j=1}^3\int_0^T\int_\Omega\rho_\delta(t,x)v_\delta{}_j(t,x)\frac{v_\delta(x-he^j)+v_\delta(x-he^j)}{2}\cdot\p_{x_j}\phi(t,x)dxdt\\
&&\quad -\sum_{n=0}^{T_\tau-1}\sum_{x\in \Omega_h}\sum_{j=1}^3\eta^n(x)(\tilde{u}^n_j(x)-u^n_j(x)) \frac{u^{n+1}(x-he^j)+u^{n+1}(x+he^j)}{2}\cdot \p_{x_j}\phi(t_n,x)h^3\tau\\&&\quad +O(h);
\end{eqnarray*}
\begin{eqnarray*}
&&\sum_{n=0}^{T_\tau-1}\sum_{x\in \Omega_h}\sum_{i=1}^3D^-\cdot\Big\{\mu(\eta^{n+1})\Big(D^+u^{n+1}_i+D^+_iu^{n+1}\Big)\Big\}(x)\phi_i(t_n,x)h^3\tau\\
&&\quad  =-\sum_{n=0}^{T_\tau-1}\sum_{x\in \Omega_h}\sum_{i=1}^3\mu(\eta^{n+1}(x))\Big(D^+u^{n+1}_i+D^+_iu^{n+1}\Big)\cdot D^+\phi_i(t_n,x)h^3\tau\\
&&\quad =-\sum_{j=1}^3\int_0^T\int_\Omega \mu(\rho_\delta(t,x)) \Big(w^j_\delta(t,x)\cdot \p_{x_j}\phi(t,x)
+w^j_\delta(t,x)\cdot \nabla\phi_j(t,x) \Big)dxdt+O(h),
\end{eqnarray*} 
 where $w_\delta^1,w_\delta^2,w_\delta^3$ are mentioned in Proposition \ref{weak-convergence}. Hence, the weak form of \eqref{d3} is 
 \begin{eqnarray*}
 &&0=\underline{\sum_{x\in \Omega_h} \eta^0(x)u^{0}(x)\cdot\phi(0,x)h^3}_{ R_1}\\
&& +\underline{\int_0^T\int_\Omega\rho_\delta(t,x)v_\delta(t,x)\cdot \p_t\phi(t,x)dxdt}_{R_2}\\
 &&+\underline{\sum_{j=1}^3\int_0^T\int_\Omega\rho_\delta(t,x)v_\delta{}_j(t,x)\frac{v_\delta(x-he^j)+v_\delta(x-he^j)}{2}\cdot\p_{x_j}\phi(t,x)dxdt}_{R_3}\\
&&-\underline{\sum_{j=1}^3\int_0^T\int_\Omega \mu(\rho_\delta(t,x)) \Big(w^j_\delta(t,x)\cdot \p_{x_j}\phi(t,x)
+w^j_\delta(t,x)\cdot \nabla\phi_j(t,x) \Big)dxdt}_{R_4}\\ 
&&+\underline{\sum_{n=0}^{T_\tau-1}\sum_{x\in \Omega_h}\eta^{n+1}(x)f^{n+1}(x)\cdot\phi(t_n,x)h^3\tau}_{R_5}\\
&&-\underline{\sum_{n=0}^{T_\tau-1}\sum_{x\in \Omega_h}Dq^{n+1}(x)\cdot\phi(t_n,x)h^3\tau}_{R_6}\\
&&+\underline{\sum_{n=0}^{T_\tau-1}\sum_{x\in \Omega_h}\sum_{j=1}^3\eta^n(x)(\tilde{u}^n_j(x)-u^n_j(x)) \frac{u^{n+1}(x-he^j)+u^{n+1}(x+he^j)}{2}\cdot \p_{x_j}\phi(t_n,x)h^3\tau}_{R_7}\\
&&+O(h^\alpha). 
\end{eqnarray*} 
We evaluate $R_1$-$R_7$. 
Hereafter, $M_1,M_2,\ldots$ are some constants independent of $\delta$. 
Observe that 
\begin{eqnarray*}
&&\eta^0(x)u^0(x)\cdot\phi(0,x)=h^{-3}\int_{C_h^+(x)}\rho^0(y)dy\times h^{-3}\int_{C_h^+(x)}v^0(y)dy\cdot\phi(0,x)\\
&&\quad=h^{-3}h^{-3}\int_{C_h^+(x)}\int_{C_h^+(x)}\rho^0(y)v^0(y')\cdot\phi(0,x)dydy'\\
&&\quad=h^{-3}\int_{C_h^+(x)}\rho^0(y)v^0(y)\cdot\phi(0,x)dy\\
&&\qquad +h^{-3}h^{-3}\int_{C_h^+(x)}\int_{C_h^+(x)}\rho^0(y)(v^0(y')-v^0(y))\cdot\phi(0,x)dydy'\\
&&\quad=h^{-3}\int_{C_h^+(x)}\rho^0(y)v^0(y)\cdot\phi(0,y)dy +h^{-3}\int_{C_h^+(x)}\rho^0(y)v^0(y)\cdot O(h)dy \\
&&\qquad +h^{-3}h^{-3}\int_{C_h^+(x)}\int_{C_h^+(x)}\rho^0(y)(v^0(y')-v^0(y))\cdot\phi(0,x)dydy'.
\end{eqnarray*}
Let $\{v^0_m\}_{m\in\N}\in C^1_0(\Omega)$ be an approximating sequence of $v^0$ in $L^2(\Omega)^3$. We have 
\begin{eqnarray*}
(\ast)&:=&\Big|\sum_{x\in\Omega_h}h^{-3}h^{-3}\int_{C_h^+(x)}\int_{C_h^+(x)}\rho^0(y)(v^0(y')-v^0(y))\cdot\phi(0,x)dydy'h^3\Big|\\
& =&\Big|\sum_{x\in\Omega_h}h^{-3}\int_{C_h^+(x)}\int_{C_h^+(x)}\rho^0(y)(v^0(y')-v^0_m(y'))\cdot\phi(0,x)dydy'\\
&&\qquad +\sum_{x\in\Omega}h^{-3}\int_{C_h^+(x)}\int_{C_h^+(x)}\rho^0(y)(v_m^0(y)-v^0(y))\cdot\phi(0,x)dydy'\\
&&\qquad+\sum_{x\in\Omega_h}h^{-3}\int_{C_h^+(x)}\int_{C_h^+(x)}\rho^0(y)(v_m^0(y')-v_m^0(y))\cdot\phi(0,x)dydy'\Big|\\
&\le& M_1\norm v^0-v^0_m\norm_{L^2(\Omega)^3}
+M_2\sum_{x\in\Omega_h}h^{-3}\int_{C_h^+(x)}\int_{C_h^+(x)}|v_m^0(y')-v_m^0(y)|dydy',
\end{eqnarray*}
where $M_1,M_2$ are independent from $m$. 
 For any $\ep>0$, fix $m$ so that $M_1\norm v^0-v^0_m\norm_{L^2(\Omega)^3}<\ep$. Since $v^0_m$ is uniformly continuous on $\Omega$, we have 
\begin{eqnarray*}
(\ast)\le \ep+M_2\sup_{|y-y'|\le \sqrt{3}h}|v_m^0(y')-v_m^0(y)|{\rm vol}(\Omega)\to\ep\mbox{\quad as $h\to0+$}.
\end{eqnarray*}  
Since $\ep>0$ is arbitrary, we conclude that   
\begin{eqnarray*}
R_1\to \int_0^T\int_\Omega \rho^0(t,x)v^0(x)\cdot\phi(0,x)dx\mbox{ \quad as $\delta\to0$}.
\end{eqnarray*}
Theorem \ref{weak-convergence}, Theorem \ref{strong-convergence}, Theorem \ref{strong2} and Corollary \ref{ssssss} yield 
\begin{eqnarray*}
&&R_2\to \int_0^T\int_\Omega \rho(t,x)v(t,x)\cdot\p_t\phi(t,x)dxdt\mbox{ \quad as $\delta\to0$},\\
&&R_3\to \sum_{j=1}^3\int_0^T\int_\Omega \rho(t,x)v_j(t,x)v(t,x)\cdot\p_{x_j}\phi(t,x)dxdt\mbox{ \quad as $\delta\to0$},\\
&&R_4\to \sum_{j=1}^3\int_0^T\int_\Omega \mu(\rho(t,x))\Big(\p_{x_j}v(t,x)\cdot\p_{x_j}\phi(t,x)
+\p_{x_j}v(t,x)\cdot\nabla\phi_j(t,x)\Big)dxdt\\
&&\qquad =  \sum_{j=1}^3\int_0^T\int_\Omega \mu(\rho(t,x))\Big(\p_{x_j}v(t,x)
+\nabla v_j(t,x)\Big)\cdot\p_{x_j}\phi(t,x)dxdt \mbox{ \quad as $\delta\to0$},
\end{eqnarray*}
where we note that $\rho_\delta v_\delta\to\rho v=\tilde{v}$ in $L^2([0,T];L^2(\Omega)^3)$ as $\delta\to0$. 
Observe that 
\begin{eqnarray*}
&&\eta^{n+1}(x)f^{n+1}(x)\cdot\phi(t_n,x)=\tau^{-1}h^{-3}\int^{\tau(n+1)}_{\tau n}\int_{C_h^+(x)}\rho_\delta(s,y)f(s,y) \cdot\phi(t_n,x)dyds\\
&&\quad =\tau^{-1}h^{-3}\int^{\tau(n+1)}_{\tau n}\int_{C_h^+(x)}\rho_\delta(s,y)f(s,y) \cdot\phi(s,y)dyds \\
&&\qquad +\tau^{-1}h^{-3}\int^{\tau(n+1)}_{\tau n}\int_{C_h^+(x)}\rho_\delta(s,y)f(s,y) \cdot O(h)dyds.
\end{eqnarray*}
Hence, we obtain 
$$R_5\to \int_0^T\int_\Omega \rho(t,x)f(t,x)\cdot\phi(t,x)dxdt\mbox{ \quad as $\delta\to0$}.$$
It follows from \eqref{qqqq} that $R_6\to 0$  as $\delta\to0$. 
\eqref{moll2} implies that $R_7\to 0$  as $\delta\to0$.
\end{proof}

\medskip\medskip\medskip

\noindent{\bf Acknowledgement.} 
This paper was written during author's one-year research stay  in Fachbereich Mathematik, Technische Universit\"at Darmstadt, Germany, with the grant Fukuzawa Fund (Keio Gijuku Fukuzawa Memorial Fund for the Advancement of Education and Research). The author expresses special thanks to Professor Dieter Bothe for his kind hosting in TU-Darmstadt. The author is supported by JSPS Grant-in-aid for Young Scientists \#18K13443 and JSPS Grants-in-Aid for Scientific Research (C) \#22K03391. 

\medskip\medskip\medskip

\end{document}